\theoremstyle{definition}
\newtheorem{definition}{Definition}[section]
\theoremstyle{theorem}
\newtheorem{theorem}{Theorem}[section]
\newtheorem{corollary}[theorem]{Corollary}
\newtheorem{lemma}[theorem]{Lemma}
\newtheorem{assumption}[theorem]{Assumption}
\theoremstyle{remark}
\newtheorem{remark}{Remark}[section]
\numberwithin{equation}{section}
\definecolor{bostonuniversityred}{rgb}{0.8, 0.0, 0.0}
\newcommand{\amit}[1]{{\color{black}#1}}
\definecolor{blue(ryb)}{rgb}{0.01, 0.28, 1.0}
\newcommand{\norm}[1]{\left\lVert#1\right\rVert}
\newcommand{\abs}[1]{\left\lvert#1\right\rvert}
\newcommand{\pa}[1]{\left( #1 \right)}
\newcommand{\rpa}[1]{\left[ #1 \right]}
\newcommand{\br}[1]{\left\lbrace #1\right\rbrace}
\newcommand{\R}{\mathbb{R}}
\newcommand{\N}{\mathbb{N}}
\newcommand{\PP}{\mathcal{P}}
\newcommand{\cP}{\mathcal{P}}
\newcommand{\adm}{\mathrm{adm}}
\newcommand{\cadm}{\mathrm{cADM}}
\newcommand{\pcadm}{\mathrm{pcADM}}
\newcommand{\A}{\mathcal{A}}
\newcommand{\D}{\mathcal{D}}
\newcommand{\CC}{\mathcal{C}}
\renewcommand{\div}{\mathrm{div}}
\newcommand{\wt}{\widetilde}
\newcommand{\jj}{\mathfrak{j}}
\newcommand{\sign}{\mathrm{sign}}
\newcommand{\X}{\mathcal{X}}
\newcommand{\EE}{\mathcal{E}}
\newcommand{\e}{\mathfrak{e}}
\newcommand{\dd}{{\mathrm{d}}}
\title[Static and dynamic optimal transport governed by linear control systems]{On the equivalence between static and dynamic optimal transport governed by linear control systems}
\author[A. Einav]{Amit Einav}
\address{Amit Einav \hfill\break 
	Department of Mathematical Sciences, Durham University, Upper Mountjoy Campus, Stockton Road DH1 3LE Durham, United Kingdom}
\email{amit.einav@durham.ac.uk}
\author[Y. Jiang]{Yue Jiang}
\address{Yue Jiang \hfill\break 
	Department of Mathematical Sciences, Durham University, Upper Mountjoy Campus, Stockton Road DH1 3LE Durham, United Kingdom}
\email{yue.jiang@durham.ac.uk}
\author[A.R. M\'esz\'aros]{Alp\'ar R. M\'esz\'aros}
\address{Alp\'ar R. M\'esz\'aros \hfill\break 
	Department of Mathematical Sciences, Durham University, Upper Mountjoy Campus, Stockton Road DH1 3LE Durham, United Kingdom}
\email{alpar.r.meszaros@durham.ac.uk}
\begin{document}
\maketitle

\begin{abstract}
	In this paper we revisit a class of optimal transport problems associated to non-autonomous linear control systems. Building on properties of the cost functions on $\R^{d}\times\R^{d}$ derived from suitable variational problems, we show the equivalence between the static and dynamic versions of the corresponding transport problems. Our analysis is constructive in nature and relies on functional analytic properties of the {\it end-point map} and the fine properties of the optimal control functions. These lead to some new quantitative estimates which play a crucial role in our investigation. 
\end{abstract}
\vspace{1cm}

{
	{Keywords and phrases:} optimal transport; linear control systems; Benamou--Brenier formula.}
\\{\indent{MSC 2020:}
	49Q22; 35Q49; 49J15; 49N80.}
	

\vspace{1cm}

\section{Introduction}\label{sec:intro}

The theory of optimal transport has witnessed a great success in the past three decades due to its far reaching applications and unexpected connections to multiple mathematical fields. We refer to the monographs \cite{Villani03, AGS08, Villani08, Filippo15} for a  thorough description of the theory.

One such connection between PDEs, geometry and mathematical physics was initiated by Benamou--Brenier in \cite{BB00}, giving a dynamic, fluid mechanical description of the classical Monge--Kantorovich optimal transport problem. Let $p>1$ be given and denote by $\cP_p(\R^d)$ the set of Borel probability measures supported on $\R^d$ having finite $p^{th}$-moments. The classical $p$-Wasserstein distance $W_p:\cP_p(\R^d)\times\cP_p(\R^d)\to[0,+\infty)$ is defined as 
\begin{equation}\label{def:W_p}
W_p(\mu, \nu):=\inf\left\{\int_{\R^d\times\R^d}|x-y|^p\dd\pi(x,y):\ \ \pi\in\Pi(\mu,\nu)\right\}^{\frac1p},
\end{equation}
where $\Pi(\mu,\nu):=\left\{\pi\in\cP_p(\R^d\times\R^d): (\pi^x)_\sharp\pi = \mu; (\pi^y)_\sharp\pi = \nu\right\}$ stands for the set of transport or transference plans, and $\pi^x,\pi^y:\R^d\times\R^d\to\R^d$ denote the canonical projections, i.e. $\pi^x(x,y) = x$ and $\pi^y(x,y) = y$.

Proven originally in \cite{BB00} for $p=2$, and later extended for general $p>1$ (cf. \cite{AGS08}; see also \cite{Jimenez08}), we have that $W_p$ can be equivalently characterised by
\begin{equation}
\label{bb formula introduction}
    W_p(\mu,\nu)=\inf_{(\rho,v)\in \adm(\mu,\nu)}\left\{\int_0^1\int_{\mathbb{R}^d}|v(t,x)|^p\,\dd\rho_t(x)\,\dd t\right\}^{\frac1p},
\end{equation}
where $\adm(\mu,\nu)$ denotes the set of pairs $(\rho_{t},v_{t})_{t\in[0,1]}$ with $\rho:[0,1]\to \mathcal{P}_p(\mathbb{R}^d)$ a narrowly continuous curve and $v:[0,1]\times\mathbb{R}^d\to \mathbb{R}^d$ is a time-dependent Borel vector field with 
\begin{equation}\nonumber
			\int_0^1\int_{\mathbb{R}^d}|v_t(x)|^p\,\dd\rho_t(x)\,\dd t<\infty,
\end{equation}
such that the continuity equation 
\begin{equation}
	\label{eq:continuity_equation}
	\left\{
	\begin{array}{ll}
	\partial_t\rho_t +  \div (\rho_t v_t)=0, &  {\rm{in}}\ \ (0,1)\times\R^d,\\
	\rho_0=\mu,\quad \rho_1=\nu,
	\end{array}
	\right.
\end{equation}
is satisfied in the sense of distributions on $\pa{0,1}\times \R^d$.
While the classical Monge--Kantorovich problem described in \eqref{def:W_p} is a `static' problem, its counterpart in \eqref{bb formula introduction} is a `dynamic' one (having an Eulerian perspective). Beyond the connection to fluid mechanics, \eqref{bb formula introduction} reveals a deep geometric feature of the metric space $(\cP_p(\R^d),W_p)$, as this problem is nothing but a geodesic problem, which can be equivalently written as 
$$
W_p(\mu,\nu) = \inf\left\{\int_0^1 |\rho'|_{W_p}(t)\dd t: \rho:[0,1]\to\cP_p(\R^d), \rho_0 = \mu, \rho_1=\nu\right\},
$$
where $|\rho'|_{W_p}(t)$ stands for the metric derivative of the curve $(\rho_s)_{s\in[0,1]}$ at $t$. Therefore, \eqref{bb formula introduction} precisely selects constant speed geodesics connecting $\mu$ to $\nu$ in $(\cP_p(\R^d),W_p)$, and in fact if $(\rho_t,v_t)_{t\in[0,1]}$ is optimal then $\|v_t\|_{L^p_{\rho_t}} = |\rho'|_{W_p}(t)$ for almost every $t\in[0,1]$.

\medskip

Starting from this connection between \eqref{def:W_p} and \eqref{bb formula introduction}, one can naturally ask the question whether this extends to more complex settings, such as in the case of curved reference space instead of $\R^{d}$ or more general cost functions \amit{$c(x,y)=\abs{x-y}^{p}$ for $p\in (1,\infty)$}. 

For a given lower semicontinuous and bounded below cost function $c:\R^{d}\times\R^{d}\to\R$, and measures $\mu,\nu\in\cP(\R^d)$ (having suitable moment bounds) the direct method of the calculus of variations immediately gives the existence of an optimiser for the problem 
$$
\inf\left\{\int_{\R^d\times\R^d}c(x,y)\dd\pi(x,y):\ \ \pi\in\Pi(\mu,\nu)\right\}.
$$
However, it seems to be much more challenging, in general, to find a dynamic equivalent for this for generic cost functions $c$.

When the cost function is derived from a Lagrangian action, i.e.
$$
c(x,y):=\inf\left\{\int_0^1L(s,\gamma(s),\dot \gamma(s))\dd s:\ \gamma(0) = x, \gamma(1)=y\right\},
$$
where $L:[0,1]\times TM\to \R$ is a given Lagrangian, defined on the product of the time interval $[0,1]$ and the tangent bundle of a manifold $M$, breakthrough results were obtained in \cite{Bernard06, Bernard07} and \cite{DePascale06}. These results propose not only an abstract dynamic transport problem equivalent of the static problem via the concept of Young measures, but made interesting connections with the Mather theory in Hamiltonian mechanics. We also mention the work \cite{Pratelli05}, in a similar context.

\medskip

A common feature in all the aforementioned models is that at the microscopic level individual particles are optimising their velocities, and are trying to minimise a global kinetic-type energy together. One can imagine situations, however, when because of particles being constrained in particular geometric settings, they have  `restricted' directions. An interesting geometric framework for this is the case of sub-Riemannian geometry, when the allowable directions are given by a subset of the possible directions from the tangent space of the underlying manifold. Using control theoretic language, in such situations the dimension of the space where the controls live is often strictly smaller than the dimension of the state space. Optimal transport problems, therefore, must take such restrictions into account (see for instance \cite{Ambrosio04, Figalli08, AgrLee:09, FigRif:10, HPR11, Rifford14, CGP17, ELO23, EL24, ElaJac:25}).

\medskip

\subsection*{The setting of the problem considered in this paper and our main results}

In this work, we consider the optimal transport problem associated to linear control systems of the form 
\begin{align}\label{eq:intro_cont}
\gamma'(t) = M(t)\gamma(t)+N(t)\alpha(t),\ \  t\in(0,T),
\end{align}
where $T>0$ is a given time horizon and $M:[0,T]\to\mathbb{R}^{d\times d},\ \ N:[0,T]\to\mathbb{R}^{d\times n}$ are two given matrix-valued curves with $d,n\in\N$ such that $1\leq n\leq d$. Here $\gamma:[0,T]\to\R^d$ represents the state variable, while $\alpha:[0,T]\to\R^n$ stands for the control. For $p>1$, we define the cost function $c_p:\R^d\times\R^d\to[0,+\infty)$ via
\begin{equation}\label{intro:c_p}
c_p(x,y):=\inf\left\{\int_0^T |\alpha(t)|^p\dd t:\ (\gamma,\alpha)\in\pcadm(x,y) \right\},
\end{equation}
where
$$
\pcadm(x,y):= \left\{\ (\gamma,\alpha)\in W^{1,p}\pa{0,T ;\R^d}\times L^p\pa{0,T;\R^n}\ {\rm{solves}}\ \eqref{eq:intro_cont}\ {\rm{and}}\ \gamma(0) = x,\ \gamma(T) = y\right\}.
$$ 
Throughout the paper we impose classical sufficient assumptions (that we detail later; cf. \cite{Sontag98}) which will allow the controllability of the above system. These in particular imply that for any $x,y\in\R^d$ we have $\pcadm(x,y)\neq\emptyset$.

\medskip

We are concerned with the equivalence between the `static' and `dynamic' (or `Benamou--Brenier type') optimal transport problems associated to the cost function $c_p$. Let $\mu,\nu\in\cP_p(\R^d)$ be given. The {\it static} problem is defined as 
\begin{equation}\label{intro:static}
\CC_p\pa{\mu,\nu}:=\inf_{\pi\in \Pi(\mu,\nu)}\int_{\mathbb{R}^d\times\mathbb{R}^d} c_p(x,y)\dd\pi(x,y).
\end{equation}
The set $\pcadm(x,y)$ of admissible path and control pairs naturally generate pairs of measure flows and associated controls connecting two probability measures. Indeed, for $\mu,\nu\in\cP_p(\R^d)$ define
 \begin{align*}
\cadm(\mu,\nu):=\left\{(\rho_t,u_t)_{t\in[0,T]}\ \ {\rm{solves}}\ \ \eqref{eq:continuity_extended_explicit}\ \ {\rm{and}}\ \rho_0 = \mu,\rho_T = \nu\right\}, 
\end{align*} 
where $\rho:[0,T]\to \PP_p\pa{\R^d}$ is a narrowly continuous curve such that $\displaystyle \int_0^{\amit{T}}\int_{\R^d}|x|^p\dd\rho_t(x)\dd t<+\infty$ and $u:[0,T]\times\R^d\to\R^n$ is a Borel vector field such that $\displaystyle \int_0^1\int_{\R^d}|u_t(x)|^p\dd\rho_t(x)\dd t<+\infty$, and
\begin{equation}\label{eq:continuity_extended_explicit}
		\left\{
		\begin{array}{ll}
			\partial_t\rho_t +  \div (\rho_t b(t,\cdot,u_t(\cdot)))=0, & {\rm{in}}\ (0,T)\times\R^d,\\[5pt]
			b(t,x,u_t(x)):=M(t)x + N(t)u_t(x), & {\rm{a.e.}}\ (t,x)\in (0,T)\times\R^d
		\end{array}
		\right.
	\end{equation}
is understood in the sense of distributions on $(0,T)\times\R^d$. With this definition at hand we introduce the {\it dynamic} optimal transport problem for $\mu,\nu\in\cP_p(\R^d)$ as 
\begin{equation}\label{controlled p action eq}
		\mathcal{D}_p(\mu,\nu):= \inf_{(\rho,u)\in \cadm(\mu,\nu)}\int_0^T \int_{\mathbb{R}^d}|u(t,x)|^p \,\dd\rho_t(x)\dd t.
\end{equation}

The fact that $\cadm\pa{\mu,\nu}$ is non-empty is nontrivial, and this is the subject of the first result of this paper.

\begin{theorem}\label{thm:not_empty}
 Under our standing controllability assumptions (cf. Assumption \ref{asmp} below) the set $\cadm\pa{\mu,\nu}$ is not empty for any $\mu,\nu\in \PP_p\pa{\R^d}$. Consequently, $\D_p\pa{\mu,\nu}<+\infty$.
\end{theorem}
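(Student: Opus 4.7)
My plan is a superposition-type argument. Given $\mu,\nu\in\cP_p(\R^d)$, I would take a transport plan $\pi\in\Pi(\mu,\nu)$ (concretely the product $\pi = \mu\otimes\nu$) together with a Borel-measurable selection $(x,y)\mapsto\alpha^{x,y}$ of controls for which $(\gamma^{x,y},\alpha^{x,y})\in\pcadm(x,y)$, where $\gamma^{x,y}$ denotes the associated trajectory. Pushing $\pi$ forward under the trajectory-evaluation map at time $t$ then produces a narrowly continuous curve $\rho_t$ connecting $\mu$ to $\nu$, and a barycentric (conditional-expectation) projection of the controls produces a Borel vector field $u_t$ driving the continuity equation \eqref{eq:continuity_extended_explicit}. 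The whole construction lands inside $\cadm(\mu,\nu)$ provided $\int c_p\,\dd\pi<+\infty$, which I would obtain from the pointwise bound $c_p(x,y)\leq C(1+|x|^p+|y|^p)$ combined with $\mu,\nu\in\cP_p(\R^d)$.

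Both the pointwise cost bound and the Borel selection come from the linear structure of \eqref{eq:intro_cont}. By variation of constants, $\alpha\in L^p(0,T;\R^n)$ is admissible for endpoints $(x,y)$ if and only if it lies in the affine subspace
\[ \{\alpha\in L^p(0,T;\R^n)\,:\, E_T\alpha = y-\Phi(T,0)x\}, \qquad E_T\alpha := \int_0^T\Phi(T,s)N(s)\alpha(s)\,\dd s, \]
with $\Phi$ the state-transition matrix of $M$. Under Assumption \ref{asmp}, $E_T:L^p(0,T;\R^n)\to\R^d$ is a surjective bounded linear operator, so its minimum-norm right inverse (explicit via the Gramian when $p=2$, and via a continuous section of the quotient $L^p/\ker E_T\simeq\R^d$ in general) furnishes a continuous map $z\mapsto\alpha_z$ with $\|\alpha_z\|_{L^p}\leq C|z|$. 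Setting $\alpha^{x,y}:=\alpha_{y-\Phi(T,0)x}$ then yields both the Borel selection and the estimate $c_p(x,y)\leq\|\alpha^{x,y}\|_{L^p}^p\leq C(1+|x|^p+|y|^p)$, so that $\int_{\R^d\times\R^d}c_p(x,y)\,\dd\pi(x,y)<+\infty$ for $\pi=\mu\otimes\nu$.

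With the selection fixed, set $\rho_t:=(e_t)_\sharp\pi$ where $e_t(x,y):=\gamma^{x,y}(t)$; uniform-in-$t$ $p$-moment bounds on $\rho_t$ follow from Grönwall applied to $(\gamma^{x,y})'=M(t)\gamma^{x,y}+N(t)\alpha^{x,y}$, and narrow continuity in $t$ is immediate by dominated convergence. Let $\sigma_t\in\cP(\R^d\times\R^n)$ be the joint law of $(\gamma^{x,y}(t),\alpha^{x,y}(t))$ under $\pi$, disintegrate $\sigma_t=\rho_t\otimes\sigma_{t,x}$, and set $u_t(x):=\int_{\R^n}\alpha\,\dd\sigma_{t,x}(\alpha)$. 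Jensen's inequality applied to the convex function $|\cdot|^p$ gives
\[ \int_0^T\int_{\R^d}|u_t(x)|^p\,\dd\rho_t(x)\,\dd t \;\leq\; \int_{\R^d\times\R^d}\int_0^T|\alpha^{x,y}(t)|^p\,\dd t\,\dd\pi(x,y)\;<\;+\infty. \]
Testing \eqref{eq:continuity_extended_explicit} against $\varphi\in C^\infty_c((0,T)\times\R^d)$ reduces, via the tower property for the conditional expectation defining $u_t$ and the pointwise ODE satisfied by $\gamma^{x,y}$, to
\[ \int_{\R^d\times\R^d}\int_0^T\frac{\dd}{\dd t}\varphi(t,\gamma^{x,y}(t))\,\dd t\,\dd\pi(x,y)\;=\;0, \]
which is immediate. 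Hence $(\rho,u)\in\cadm(\mu,\nu)$ and $\D_p(\mu,\nu)\leq \int c_p\,\dd\pi<+\infty$.

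The main obstacle will be the joint Borel measurability underpinning the construction: of the $L^p$-valued selection $(x,y)\mapsto\alpha^{x,y}$, of the map $(t,x,y)\mapsto(\gamma^{x,y}(t),\alpha^{x,y}(t))$, and most importantly of the disintegrated vector field $(t,x)\mapsto u_t(x)$, so that the nonlinear composition $b(t,x,u_t(x))$ in \eqref{eq:continuity_extended_explicit} is well-defined Borel. The linearity of $E_T$ (which yields a continuous pseudoinverse) handles the first two, and the standard measurable-structure results on disintegrations of jointly Borel kernels handle the third.
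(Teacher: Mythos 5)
Your construction is correct in substance and reaches the conclusion, but it follows a genuinely different route from the paper. The paper first builds a probability measure on the path space $\X$ concentrated on the set $\Gamma_p$ of trajectories driven by the \emph{optimal} controls $\alpha_p^\ast$ (Theorem \ref{thm:relaxed controllability}: empirical approximation of $\mu,\nu$, tightness of the resulting path measures, Prokhorov), and only then produces $(\rho_t,u_t)$ by pushing forward under $e_t$ and disintegrating the time-lifted measure $\frac{\dd t}{T}\otimes\eta$ with respect to $\bm{e}(t,\gamma)=(t,\gamma(t))$; the continuity of $\alpha_p^\ast$ (Theorem \ref{thm:continuity_of_xi}) and the bound \eqref{eq:explicit_upper_bound_for_c_p} are what make this work. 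You instead fix a single coupling $\pi=\mu\otimes\nu$ and exploit the linearity of the end-point map: surjectivity of $E^0_{0,T}$ (the paper's Theorem \ref{thm:controllability}) gives a bounded linear right inverse $z\mapsto\alpha_z$ (preimages of a basis of $\R^d$), hence a selection $\alpha^{x,y}=\alpha_{y-\Phi(0,T)x}$ that is jointly Carath\'eodory in $(t,x,y)$, and the whole tightness/Prokhorov layer disappears. This is more elementary and perfectly adequate here, since the theorem only asks for nonemptiness of $\cadm(\mu,\nu)$ and finiteness of $\D_p$; the paper's heavier construction pays off later, because measures in $\A_p(\mu,\nu)$ built from optimal controls feed directly into the proof of Theorem \ref{thm:general_bb} via the bijection $\EE_{0,T}$.

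Two caveats. First, your closing inequality $\D_p(\mu,\nu)\leq\int c_p\,\dd\pi$ is not what your construction delivers: the selection $\alpha^{x,y}$ is admissible but in general not optimal for $p\neq2$, so Jensen only gives $\D_p(\mu,\nu)\leq\int_{\R^d\times\R^d}\|\alpha^{x,y}\|^p_{L^p(0,T;\R^n)}\dd\pi(x,y)\leq C\int(1+|x|^p+|y|^p)\dd\pi<\infty$; that is enough for the theorem, but the sharper bound by $\int c_p\,\dd\pi$ requires a measurable selection of optimal controls (which the paper obtains from the continuity of $\alpha_p^\ast$). Second, the joint Borel measurability of $(t,x)\mapsto u_t(x)$, which you rightly flag as the main obstacle, is not automatic from fibrewise disintegrations of $\sigma_t$ at each fixed $t$: you need to disintegrate a single time-lifted measure (e.g.\ $\frac{\dd t}{T}\otimes\sigma$ on $[0,T]\times\R^d\times\R^n$ over its marginal on $[0,T]\times\R^d$), which is exactly the device the paper uses and for which it still has to prove a Fubini-type measurability statement (Lemma \ref{applem:missing_measurability}) before the distributional verification of \eqref{eq:continuity_extended_explicit} can be carried out. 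With those two points repaired, your argument is a valid, shorter proof of this particular statement.
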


The main result of our paper is the equivalence between the static and dynamic problems, and this is the subject of the following theorem.

\begin{theorem}\label{thm:general_bb}
Let $\mu,\nu\in \mathcal{P}_p(\mathbb{R}^d)$ with $p>1$ and let $M:[0,T]\to \mathbb{R}^{d\times d}$ and $ N:[0,T]\to \mathbb{R}^{d\times n}$ satisfy the standing controllability assumptions (cf. Assumption \ref{asmp} below). Then, the following equivalence holds:
		\begin{equation*}
			\label{eq:generalised_bb}
			\min_{\pi\in \Pi(\mu,\nu)}\int_{\mathbb{R}^d\times\mathbb{R}^d} c_p(x,y)\dd\pi(x,y)=\CC_p\pa{\mu,\nu} =
			\D_p\pa{\mu,\nu}=\min_{(\rho,u)\in \cadm(\mu,\nu)}\int_0^T \int_{\mathbb{R}^d}|u(t,x)|^p \,\dd\rho_t(x)\dd t.
		\end{equation*}
In particular, the minimisers for both variational problems exist.
\end{theorem}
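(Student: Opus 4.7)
The plan is to prove the two inequalities $\D_p(\mu,\nu)\leq \CC_p(\mu,\nu)$ and $\CC_p(\mu,\nu)\leq \D_p(\mu,\nu)$ separately, in the spirit of the classical Benamou--Brenier equivalence but adapted to the controlled dynamics \eqref{eq:intro_cont}. Existence of a minimiser on the static side follows from the direct method since $c_p$ is lower semicontinuous and $\Pi(\mu,\nu)$ is narrowly compact; existence on the dynamic side will follow a posteriori from the chain of inequalities below together with Theorem \ref{thm:not_empty}, since the pair $(\rho,u)$ produced in the construction from an optimal $\pi$ will itself attain the infimum in $\D_p(\mu,\nu)$.

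For $\CC_p\leq \D_p$, I would pick any $(\rho,u)\in \cadm(\mu,\nu)$ with finite $p$-action and apply an Ambrosio-type superposition principle to \eqref{eq:continuity_extended_explicit} with velocity field $b(t,x,u_t(x))=M(t)x+N(t)u_t(x)$. This yields a probability measure $\eta$ on $W^{1,p}(0,T;\R^d)$ concentrated on curves $\gamma$ solving $\gamma'(t)=M(t)\gamma(t)+N(t)u_t(\gamma(t))$ a.e., with $\rho_t=(e_t)_\sharp \eta$ for $e_t(\gamma):=\gamma(t)$. Setting $\alpha_\gamma(t):=u_t(\gamma(t))$, the pair $(\gamma,\alpha_\gamma)$ lies in $\pcadm(\gamma(0),\gamma(T))$ for $\eta$-a.e.\ $\gamma$, hence
$$
c_p(\gamma(0),\gamma(T))\leq \int_0^T |u_t(\gamma(t))|^p\,\dd t.
$$
Integrating against $\eta$ and using $\rho_t=(e_t)_\sharp\eta$ on the right-hand side produces the desired inequality with $\pi:=(e_0,e_T)_\sharp\eta\in\Pi(\mu,\nu)$.

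For $\D_p\leq \CC_p$, I would start from an optimal static plan $\pi$ and construct, via a Kuratowski--Ryll-Nardzewski type argument combined with the functional analytic properties of the end-point map and the fine properties of optimal controls highlighted in the introduction, a Borel measurable selection $(x,y)\mapsto (\gamma_{x,y},\alpha_{x,y})$ with $(\gamma_{x,y},\alpha_{x,y})\in\pcadm(x,y)$ attaining $c_p(x,y)$. I would then define $\rho_t:=(E_t)_\sharp\pi$ with $E_t(x,y):=\gamma_{x,y}(t)$, set $\sigma_t:=(\Phi_t)_\sharp\pi$ with $\Phi_t(x,y):=(\gamma_{x,y}(t),\alpha_{x,y}(t))$, disintegrate $\sigma_t=\sigma_t^z\otimes\rho_t(z)$, and let
$$
u_t(z):=\int_{\R^n}a\,\dd\sigma_t^z(a).
$$
Linearity of the drift in $\alpha$ lifts the family of characteristic ODEs $\gamma_{x,y}'=M\gamma_{x,y}+N\alpha_{x,y}$ to \eqref{eq:continuity_extended_explicit} for $(\rho_t,u_t)$ when tested against smooth functions, while Jensen's inequality gives $\int_{\R^d}|u_t|^p\,\dd\rho_t\leq \int_{\R^d\times\R^d}|\alpha_{x,y}(t)|^p\,\dd\pi(x,y)$, which upon time integration delivers $\D_p(\mu,\nu)\leq \int c_p\,\dd\pi=\CC_p(\mu,\nu)$. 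Combining with the previous step forces equality and promotes $\pi$ and $(\rho,u)$ to joint minimisers.

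The two main obstacles I anticipate are the following. First, the superposition step requires the velocity field $b(t,\cdot,u_t(\cdot))$ to satisfy enough integrability and growth for the Ambrosio representation to apply, together with a uniform $p$-moment bound on $\rho_t$; these should follow from the finite action hypothesis, the continuity of $M,N$, and a Gr\"onwall estimate along characteristics. Second, and more delicate, is the Borel selection of optimal control pairs: it demands that the set-valued map $(x,y)\rightrightarrows \{\textrm{minimisers of } c_p(x,y)\}$ have closed non-empty images in an appropriate Polish space and depend measurably on $(x,y)$. This is precisely where the quantitative regularity estimates on the end-point map announced in the abstract come into play, and is likely the single most delicate technical point of the proof.
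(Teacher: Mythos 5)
Your two-sided strategy is essentially the paper's. The inequality $\CC_p(\mu,\nu)\le\D_p(\mu,\nu)$ is obtained exactly as in Lemma \ref{lem:half_of_BB_hard_side}: superposition along the field $b(t,x,u_t(x))=M(t)x+N(t)u_t(x)$, the observation that $u_t(\gamma(\cdot))$ is an admissible control for $\eta$-a.e.\ curve, hence $c_p(\gamma(0),\gamma(T))\le\int_0^T|u_t(\gamma(t))|^p\,\dd t$, and integration against $\eta$ with $\pi=(e_0,e_T)_\sharp\eta\in\Pi(\mu,\nu)$. The reverse inequality follows the same idea as Lemma \ref{lem:half_of_BB_easy_side}: transport an optimal static plan onto optimal trajectories, average the optimal controls over the fibres, and use Jensen's inequality to get $\D_p(\mu,\nu)\le\int c_p\,\dd\pi=\CC_p(\mu,\nu)$, with existence of a dynamic minimiser falling out of the resulting chain of inequalities. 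Two points where your route differs technically from the paper's are worth noting. First, the measurable-selection step that you single out as the most delicate point is not needed in this setting: by Theorems \ref{p-action minimiser-theorem} and \ref{thm:continuity_of_xi} the optimal control $\alpha_p^\ast(\cdot;x,y)$ is \emph{unique} and jointly \emph{continuous}, so the map $(x,y)\mapsto\gamma^{0,x}_{\alpha_p^\ast(\cdot;x,y)}$ is a single-valued continuous selection; the paper packages this as the bijection $\EE_{0,T}$ between $\A_p(\mu,\nu)$ and $\Pi(\mu,\nu)$ in Lemma \ref{lem:connection_between_optimisation_sets}, so a Kuratowski--Ryll-Nardzewski argument, while workable, is heavier machinery than required.

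Second, there is a genuine (though repairable) gap in your construction of the control: disintegrating $\sigma_t$ against $\rho_t$ separately for each fixed $t$ only defines $u_t$ up to a $\rho_t$-null set for each individual $t$, and joint Borel measurability of $(t,z)\mapsto u_t(z)$ on $[0,T]\times\R^d$ — which is required for $(\rho,u)$ to belong to $\cadm(\mu,\nu)$ and for the time-integrated Jensen estimate and the distributional formulation to make sense — does not follow automatically from a family of fibrewise disintegrations. The paper addresses precisely this issue by first lifting the measure to $[0,T]\times\Gamma_p$, i.e.\ disintegrating $\frac{\dd t}{T}\otimes\eta$ with respect to the map $(t,\gamma)\mapsto(t,\gamma(t))$, and then proving the needed measurability statements (see the proof of Theorem \ref{thm:not_empty} and Lemma \ref{applem:missing_measurability}). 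You should incorporate this lifting device, or an equivalent joint-measurability argument, to close that step; with it, your argument is complete and reproduces the same passage between static and dynamic minimisers described in Remark \ref{rem:recipe}.
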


In the proof of this theorem we rely on certain fine properties of the optimal controls in the definition of $c_p$, and these are collected in the following theorem.

\begin{theorem}
\label{thm:optimal_control_cost}
	\leavevmode
	Let $M:[0,T]\to \mathbb{R}^{d\times d}$, $ N:[0,T]\to \mathbb{R}^{d\times n}$ satisfy our standing assumptions (cf. Assumption \ref{asmp} below). 
	\begin{enumerate}[(i)]
		\item\label{item:pcADM} For any given $x,y\in\mathbb{R}^d$ and $T>0$, we have that  
		$\pcadm(x,y)\not=\emptyset$. 
		\item 
		There exists a unique $\alpha^*_p(\cdot;x,y)\in L^p\pa{0,T;\R^n}$ such that
		\begin{equation}\nonumber
			c_p(x,y)=\min_{\pa{\gamma,\alpha}\in \pcadm(x,y)}\int_0^T |\alpha(t)|^p \dd t=\int_0^T \abs{\alpha^\ast(t;x,y)}^p \dd t.
		\end{equation}
		Moreover, $\alpha^\ast_p\pa{\cdot;\cdot,\cdot}\in C\pa{[0,T]\times \R^d\times \R^d ;\R^n}$ and $c_p^{\frac{1}{p}}$ is globally Lipschitz continuous on $\R^d\times \R^d$. Consequently, $c_p$ is continuous on $\R^d\times \R^d$. 
	\end{enumerate}
\end{theorem}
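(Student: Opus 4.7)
Write solutions in variation-of-constants form: letting $R(\cdot,\cdot)$ denote the state-transition matrix of $M$, any solution of \eqref{eq:intro_cont} satisfies
\begin{equation*}
\gamma(t) = R(t,0) x + \int_0^t R(t,s) N(s) \alpha(s)\, \dd s,
\end{equation*}
so $\pcadm(x,y)$ is identified with the affine subspace $\{ \alpha\in L^p(0,T;\R^n) : E_T(\alpha;x) = y\}$, where $E_T(\alpha;x):= R(T,0)x + \int_0^T R(T,s)N(s)\alpha(s)\,\dd s$ is a continuous affine map on $L^p$. Part (i) then reduces to surjectivity of $\alpha \mapsto E_T(\alpha;x)$, which is exactly the classical controllability statement: under Assumption \ref{asmp}, the Gramian $G_T := \int_0^T R(T,s)N(s)N(s)^\top R(T,s)^\top\, \dd s$ is positive definite, and the explicit $L^\infty$ control
\begin{equation*}
\alpha(s) := N(s)^\top R(T,s)^\top G_T^{-1}(y - R(T,0)x)
\end{equation*}
belongs to $\pcadm(x,y)$ for every $x,y\in\R^d$.

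For part (ii), existence and uniqueness of $\alpha_p^*(\cdot;x,y)$ follow from the direct method: the admissible set is a non-empty, closed and convex (hence weakly closed) subset of the reflexive space $L^p(0,T;\R^n)$, while $\alpha\mapsto \|\alpha\|_{L^p}^p$ is coercive, weakly lower semicontinuous, and strictly convex for $p>1$. To obtain joint continuity of $\alpha_p^*$ in $(s,x,y)$ I would pass through the first order optimality conditions. Introducing the Lagrange multiplier $\lambda\in\R^d$ associated with the linear constraint $E_T(\alpha;x)=y$, the unique minimiser takes the form
\begin{equation*}
\alpha_p^*(s;x,y) = |w(s;\lambda)|^{q-2} w(s;\lambda), \qquad w(s;\lambda) := N(s)^\top R(T,s)^\top \lambda,
\end{equation*}
where $q = p/(p-1)$ and $\lambda = \lambda(x,y)$ is determined by the constraint. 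This constraint rewrites as $y - R(T,0)x = \Phi(\lambda)$, with
\begin{equation*}
\Phi(\lambda) := \int_0^T R(T,s) N(s)\, |w(s;\lambda)|^{q-2} w(s;\lambda)\, \dd s = \nabla_\lambda \left(\tfrac{1}{q}\int_0^T |w(s;\lambda)|^q\, \dd s\right),
\end{equation*}
which exhibits $\Phi$ as the gradient of a $C^1$, strictly convex and coercive potential on $\R^d$ (Assumption \ref{asmp} ensures $w(\cdot;\lambda)\not\equiv 0$ whenever $\lambda\neq 0$, which drives both strict convexity and coercivity). Hence $\Phi:\R^d\to\R^d$ is a homeomorphism, $\lambda(x,y) = \Phi^{-1}(y - R(T,0)x)$ depends continuously on $(x,y)$, and joint continuity of $\alpha_p^*(s;x,y)$ follows from the continuity of $w$ in $s$ and of $z\mapsto |z|^{q-2} z$ on $\R^n$.

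The Lipschitz estimate on $c_p^{1/p}$ rests on a homogeneity and superposition principle. Since the constraint defining $\pcadm(x,y)$ reads $\int_0^T R(T,s) N(s)\alpha(s)\, \dd s = y - R(T,0)x$, the value $c_p(x,y)$ depends on $(x,y)$ only through $\bar y := y - R(T,0)x$. Let $f(\bar y) := c_p(0,\bar y)$. Rescaling $\alpha\mapsto \kappa \alpha$ gives $f(\kappa \bar y) = |\kappa|^p f(\bar y)$, while summing optimal controls for $\bar y_1, \bar y_2$ produces an admissible control for $\bar y_1 + \bar y_2$, whence
\begin{equation*}
f^{1/p}(\bar y_1 + \bar y_2) \leq f^{1/p}(\bar y_1) + f^{1/p}(\bar y_2).
\end{equation*}
Controllability forces $f^{1/p}$ to be positive on $\R^d\setminus\{0\}$, so $f^{1/p}$ is a norm on $\R^d$; equivalence of norms on $\R^d$ then implies that $c_p^{1/p}(x,y) = f^{1/p}(y - R(T,0)x)$ is globally Lipschitz on $\R^d\times\R^d$. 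Continuity of $c_p$ follows immediately.

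I expect the delicate point to be the verification that $\Phi$ is a genuine homeomorphism of $\R^d$, that is, that the underlying potential $\tfrac{1}{q}\int_0^T |w(s;\lambda)|^q\, \dd s$ is strictly convex and coercive. This is precisely where the quantitative consequences of Assumption \ref{asmp} must be invoked through the injectivity of $\lambda\mapsto w(\cdot;\lambda)$, and where the mild non-smoothness of $z\mapsto|z|^q$ at the origin (for $1<q<2$) requires some care.
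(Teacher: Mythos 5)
Your proposal is correct in outline, but it takes a genuinely different route from the paper at the two delicate points of part (ii). For the joint continuity of $\alpha_p^*$, you solve the multiplier equation by convex duality: writing the constraint as $y-\Phi(0,T)x=\nabla V(\lambda)$ with $V(\lambda)=\tfrac1q\int_0^T|N(s)^\top\Phi(s,T)^\top\lambda|^q\dd s$, you use strict convexity and coercivity of $V$ (both coming from the injectivity of $\lambda\mapsto N(\cdot)^\top\Phi(\cdot,T)^\top\lambda$, i.e.\ from controllability) to conclude that $\nabla V$ is a homeomorphism, so $\lambda(x,y)=(\nabla V)^{-1}\pa{y-\Phi(0,T)x}$ is continuous and $\alpha_p^*$ inherits continuity from the explicit formula. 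The paper instead establishes only the implicit multiplier relation (its Theorem on the formula for $\alpha_p^*$) and proves continuity of the multiplier $\xi_p(x,y)$ by an a priori bound $|\xi_p|\leq C\,c_p^{1/q}$, compactness, and identification of subsequential limits through uniqueness of the minimiser. Your duality argument is more constructive and buys an explicit representation $\lambda=(\nabla V)^{-1}(y-\Phi(0,T)x)$; the paper's argument avoids having to discuss invertibility of $\nabla V$ and Legendre transforms. Your Lipschitz argument for $c_p^{1/p}$ (observe that $c_p$ depends only on $\bar y=y-\Phi(0,T)x$, show $f^{1/p}(\bar y)=c_p(0,\bar y)^{1/p}$ is $1$-homogeneous, subadditive and positive off the origin, hence a norm, then use norm equivalence) is essentially the same idea as the paper's metric $d_p(x,y)=c_p(x,\Phi(0,T)y)^{1/p}$, just phrased directly as a norm of the displacement; both yield the two-sided comparison with $|y-\Phi(0,T)x|^p$. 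Existence and uniqueness via the direct method on the weakly closed affine admissible set plus strict convexity of $\norm{\cdot}_{L^p}^p$ is equivalent to the paper's argument (weak continuity of the end-point map plus equality in the triangle inequality).

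Two steps that you assert but should actually verify under the paper's (deliberately weakened) Assumption 2.1. First, for part (i) you invoke positive definiteness of the Gramian $G_T$; deducing this from the generalised Kalman rank condition is exactly the computation the paper performs for surjectivity of $E^0_{0,T}$ (if $v^\top G_T v=0$ then $v^\top\Phi(s,T)N(s)\equiv 0$, and one must differentiate $\beta$ times on $(t',T)$, prove inductively that $\frac{\dd^k}{\dd s^k}\pa{\Phi(s,T)N(s)}=\Phi(s,T)P_k(s)$, and let $s\to T^-$ to contradict $\mathrm{rank}(\mathcal{R})=d$); this cannot simply be quoted as ``classical'' because the coefficients are only $C^\beta$ near $T$. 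Second, your claim that the minimiser has the form $|w(s;\lambda)|^{q-2}w(s;\lambda)$ is stated as a necessary condition, which requires the Fréchet differentiability of $\alpha\mapsto\norm{\alpha}_{L^p}^p$ on $L^p$ and a multiplier rule with a nonvanishing normal multiplier (the paper devotes a lemma and the surjectivity of $E^0_{0,T}$ to this). You can bypass this entirely within your own framework: having found $\lambda$ with $\nabla V(\lambda)=\bar y$, the pointwise convexity inequality $\tfrac1p|a|^p\geq\tfrac1p|b|^p+\jj_p(b)^\top(a-b)$ with $b=\alpha_\lambda(s)$, integrated against the affine constraint, shows directly that $\alpha_\lambda$ is the (unique) minimiser, which makes your construction self-contained.
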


\begin{remark}\label{rem:left_hand_side_not_a_distance}
	It is important to note that left hand side of \eqref{controlled p action eq} is in general \textit{not} the $p-$power of a metric, as in general $c_p(x,x)\neq 0$. We will see, however, that $c_p(x,y)$ is intimately connected to the Euclidean distance between $x$ and the \emph{end point} of the flow of the homogeneous version of \eqref{eq:intro_cont} starting at $y$.
\end{remark}

\subsection*{The existing literature in connection with our main results}

Beyond the deep results in the framework of sub-Riemannian geometry, optimal transport associated to general control systems have received a huge attention in the last decade or so in the applied mathematics community. Indeed, these models can encapsulate various phenomena linked to multi-agents systems, mean field type optimal control problems, and mean field games. We refer to the non-exhaustive list of recent works \cite{CGP17, CM18, FLOS2019, BivQui:20, JimMarQui:20, BonFra:21, BonRos:21, BonFra:22, CLOS22, GriMes:22, ELO23, EL24, ElaJac:25, CraElaLee:25, AmbGriMes:25} and to the references therein.

The closest works studying problems similar to the ones considered in our paper are \cite{CGP17, ELO23} and \cite{EL24}. We now summarise the main results from these works, as follows.

\begin{itemize}
\item The work \cite{CGP17} considers the equivalence between the static and dynamic problems \eqref{intro:static} and \eqref{controlled p action eq} in the purely quadratic case $p=2$. Here the linear-quadratic setting gives simplifications, and allows to compute optimal controls explicitly. The authors have also introduced a `stochastic version' of the transport problems, and the equivalence result between the static and dynamic problems remains conditional to the fact that this stochastic problem is equivalent to the dynamic problem.
\item The authors in \cite{ELO23} address the question of the equivalence between \eqref{intro:static} and \eqref{controlled p action eq} in the case of general Lagrangian actions on compact manifolds, with arbitrary growth of order $p>1$ in the control variable at infinity. They also allow general control affine dynamics (the affinity only goes in the control variable while the dependence on the state variable can be nonlinear) in the control system of the form 
\begin{equation}
	\label{time-independent dynamics model 2023}
	x'(t)=F_0(x(t))+\sum_{i=1}^n F_i(x(t))\cdot \alpha_i(t),
\end{equation}
where $F_0,F_1,...,F_n$ are smooth time-independent ambient vector fields satisfying linear growth conditions at infinity. The compactness of the manifold $M$ plays a crucial role in their analysis, and the main tool used in their analysis is a relaxation technique based on Young measures (cf. \cite{Bernard07, Bernard08}).
\item Via a similar approach of relaxation and Young measures, in \cite{EL24} the author revisits the problems considered in \cite{ELO23} and makes significant improvements. In the setting of general Lagrangians and control affine dynamics as in \eqref{time-independent dynamics model 2023} the assumption on the compactness of the supports of the source and target measures is removed.
\end{itemize}

\subsection*{The novelties of our approach and description of our main results}
Instead of the nonlinear control systems \eqref{time-independent dynamics model 2023} used in \cite{ELO23} and \cite{EL24}, here we consider linear control systems of the form \eqref{eq:intro_cont}.
The Lagrangians are precisely $p$-powers of the norm of the control function, with arbitrary range of $p>1$. This form will allow us to obtain some fine properties of the cost functions $(x,y)\mapsto c_{p}(x,y)$ as well as the associated optimal control functions. It is worth noting that we allow the coefficient matrices $M$ and $N$ in the control system to be time-dependent (compared to the non-autonomous control systems in \cite{ELO23} and \cite{EL24}). Our approach in this paper relies on precise {\it functional analytic properties} of suitable operators that we describe below. In what follows we describe the guiding ideas in the proofs Theorems \ref{thm:not_empty}, \ref{thm:general_bb} and \ref{thm:optimal_control_cost}.

\medskip

As a preliminary analysis leading to the proof of our main Theorem \ref{thm:general_bb}, we spend a considerable amount of time studying the control problem \eqref{intro:c_p} in the definition of the cost function $(x,y)\mapsto c_{p}(x,y)$. In this analysis we rely of two crucial tools: the first is the {\it end-point mapping} (cf. Definition \ref{end-point mapping definition}) 
$$E_{s,t}^x:L^p(s,t)\to \mathbb{R}^d$$ 
(which associates the state of the system \eqref{eq:intro_cont} at time $t$ to a starting position $x$ at time $s$ and a given control function) and the second is the {\it state transition map} (defined in \eqref{eq:Phi})
$$
\Phi:[0,T]\times[0,T]\times\R^{d}\to\R^{d},
$$
which is the flow map associated to the homogeneous system, i.e. when $N\equiv 0$ in \eqref{eq:intro_cont}. We show that under our standing assumptions, $E_{s,t}^{x}$ is surjective (cf. Theorem \ref{thm:controllability}), weakly continuous and continuously differentiable in the Fr\'echet sense (cf. Lemma \ref{properties of endpoint mapping}). These properties readily imply that for any $x,y\in \mathbb{R}^d$ there exists a unique optimiser in \eqref{intro:c_p}, which we will denote by $\alpha^*_p(\cdot;x,y)\in L^p\pa{0,T;\R^n}$ (cf. Theorem \ref{thm:optimal_control_cost} above and Theorem \ref{p-action minimiser-theorem}). 

These will then lead to an important `comparison result' for $c_{p}(x,y)$ (see Theorem \ref{thm:optimal_control_cost} above and Corollary \ref{cor:upper_and_lower_bounds_on_c_p}), namely that there exists $K_{1},K_{2}>0$ such that for all $x,y\in\R^{d}$ we have 
$$
K_1 \abs{y-\Phi(0,T)x}^p \leq c_p\pa{x,y} \leq K_2\abs{y-\Phi(0,T)x}^p.
$$
Relying on properties of the state transition map $\Phi$, this comparison further implies that $c_{p}^{1/p}$ is globally Lipschitz continuous, and so $(x,y)\mapsto c_{p}(x,y)$ is continuous on $\R^{d}\times\R^{d}.$ This is an improvement on the corresponding results from \cite{EL24}, which showed that the cost function $(x,y)\mapsto c_{p}(x,y)$ is lower semi-continuous.

Using precise characterisations of the optimal control $\alpha^*_p:[0,T]\times\R^{d}\times\R^{d}\to\R^{n}$ via Lagrange multipliers, we show that it is continuous in all of its variables.

\medskip

Building upon these properties of $E_{s,t}^{x}$, $\alpha^{*}$, and $c_{p}$ we are equipped to investigate the optimal transport problems. The next tool in our analysis will be a superposition principle and suitably defined Borel probability measures on the space of continuous paths $C([0,T];\R^{d})$. The actual subclass of paths that we choose are precisely the ones that are {\it generated by optimal controls} $\alpha^{*}$ in \eqref{eq:intro_cont}. Approximating $\mu,\nu$ by empirical measures and utilising the controllability of the system \eqref{eq:intro_cont}, we show that we can build measures $\eta$ that are supported on $C([0,T];\R^{d})$ and connect $\mu$ to $\nu$, i.e. $(e_{0})_\sharp\eta = \mu$ and $(e_{T})_{\sharp}\eta = \nu$ (where  $e_{t}:C([0,T];\R^{d})\to\R^{d}$ stands for the evaluation map $e_{t}(\gamma) = \gamma(t)$). These measures are attained as limits of measures which are concentrated on finitely many trajectories of \eqref{eq:intro_cont}.

Having such measures $\eta$ and their suitable disintegrated families $(\eta_{t,x})_{(t,x)\in [0,T]\times\R^{d}}$ at hand, we define $(\rho_{t},u_{t})_{t\in[0,T]}$ in a natural way as
$$
\rho_{t} := (e_{t})_{\sharp}\eta,\ \ {\rm{and}}\ \ u(t,x) := \int_{e_t^{-1}\br{x}}\alpha^\ast_p \pa{t;\gamma(0),\gamma(T)}\dd\eta_{t,x}\pa{\gamma},
$$
and these will precisely be the weak solutions to the continuity equation \eqref{eq:continuity_extended_explicit}, which are candidates in the study of the dynamic problem \eqref{controlled p action eq} in the definition of $\mathcal{D}_{p}(\mu,\nu).$

\medskip

The last tool in our analysis is the very important observation that there exists a bijection between the set of measures supported on optimal trajectories and the set of transference plans $\Pi(\mu,\nu)$ (see Lemma \ref{lem:connection_between_optimisation_sets}). This is inspired by \cite[Proposition 6]{Bernard07} where a similar surjection in a different context is investigated. With this map, and all other preliminary results, the proof of the main theorem of this paper, i.e. Theorem \ref{thm:general_bb} is easily deduced. It is worth to note that a consequence of our analysis is a prescribed way to create a minimiser in one problem from a minimiser in the other. Consequently, once we have an explicit minimiser in one problem (the static one, for instance), we can generate a minimiser for the other (see Remark \ref{rem:recipe} for more details).

\medskip

To summarise this description, let us emphasise that while we obtain similar results to the ones presented in \cite{ELO23} and \cite{EL24},  the novelty of our paper lies in the methodology that we propose. By precisely analysing the variational problem associated to the cost function $c_{p}$, the optimal control function $\alpha^{*}$ and the end-point map $E_{s,t}^{x}$, then introducing measures defined on optimal trajectories, we give constructive proofs and quantitative estimates. Our hope is that our approach could lead to further new investigations also related to `kinetic type' optimal transport problems and distances, such as the ones recently studied in \cite{Iac, IacJun, BriMaaQua, Par}.

\medskip

\subsection*{Organisation of the rest of the paper} In \S\ref{sec:controllability} we will investigate the controllability question associated to the dynamics \eqref{eq:intro_cont}, and prove Theorem \ref{thm:optimal_control_cost}. We will use the tools from \S\ref{sec:controllability} to study $\cadm\pa{\mu,\nu}$ and the associated functional $(\mu,\nu)\mapsto\D_p\pa{\mu,\nu}$ in \S\ref{sec:dynamics}, culminating in the proof of the Benamou--Brenier type theorem, Theorem \ref{thm:general_bb}, in \S\ref{sec:general_bb}. We conclude the work with 
an Appendix where we show technical results that would have hindered the flow of work. 

\subsection*{Acknowledgements} YJ has been supported by the China Scholarship Council (with Scholarship Number 202308060310). ARM has been supported by the EPSRC New Investigator Award ``Mean Field Games and Master equations'' under award no. EP/X020320/1. The authors would like to thank the anonymous \amit{referees} for the helpful and constructive comments which helped improve our manuscript.

\section{Controllability of the driving ODEs and properties of the cost function $(x,y)\mapsto c_p(x,y)$}\label{sec:controllability}
Let $T>0$, $d,n\in\N$ be such that $1\leq n\leq d$, and let $M\in C([0,T];\mathbb{R}^{d\times d}),\ N\in C([0,T];\mathbb{R}^{d\times n})$. Let furthermore $x,y\in\R^d$ be given. The question of {\it controllability} of the system
\begin{equation}\label{eq:controlled}
\left\{
	\begin{split}
		&\gamma'(t)=M(t)\gamma(t)+N(t)\alpha(t),\quad 0< t<T, \\
		&\gamma(0)=x,\;\gamma(T)=y,
	\end{split}
\right.
\end{equation}
is well documented in the literature (cf. \cite{Sontag98}). Indeed, for $p\ge 1$ we have precise conditions on $M,N$ developed in the literature ensuring the existence of  $\alpha=\alpha(\cdot,x,y)\in L^p\pa{0,T;\mathbb{R}^n}$ such that we have $\gamma\in W^{1,p}\left(0,T;\R^d\right)$ which solves this system, connecting $x$ to $y$ in the prescribed way. For the convenience of the reader we discuss below this controllability question and further properties of the control system.

Throughout this paper we impose the following conditions on $M,N$.

\begin{assumption}\label{asmp}
Let $d,n\in\N$ be such that $1\leq n\leq d$. We assume the following. 
	\begin{enumerate}[(i)]
		\item\label{item:continuity_of_matrices} $M\in C([0,T];\mathbb{R}^{d\times d})$ and $N\in C([0,T];\mathbb{R}^{d\times n})$.
		\item\label{item:differentiability_of_matrices} There exists $t'\in [0,T)$ such that $M\in C^{\beta-1} \pa{t',T},N\in C^{\beta} \pa{t',T}$ for $\beta := \lfloor d/n\rfloor$. Moreover, for any $k\in \br{0,\dots, \beta-1},\widetilde{k}\in \br{0,\dots, \beta}$ 
		$$M^{(k)}\pa{T_-}=\lim_{t\to T^{-}}M^{(k)}(t)\quad \text{and} \quad N^{(\widetilde{k})}\pa{T_-}=\lim_{t\to T^{-}}N^{(\widetilde{k})}(t)\quad \text{exist and are finite}.$$
		\item\label{item:controllability} $\mathrm{rank}\pa{\mathcal{R}}=d$, where $\mathcal{R}:=\pa{\pmb{r}_0,\pmb{r}_1,...,\pmb{r}_{\beta}}\in \mathbb{R}^{d\times (\beta+1)n}$ is the $d\times \pa{\beta+1}n$ matrix which is built from the $d\times n $ block matrices
		\begin{equation}
			\label{Gamma matrix formula general M(t)}
			\pmb{r}_k:=P_k\pa{T_-}
		\end{equation}
		where $P_k$ are the family of matrix polynomial defined by the recursive condition
		\begin{equation}
			\label{formula of matrix polynomials p_i}
			P_k(s):=\begin{cases}
				N(s),\ & k=0,\\
				-M(s)P_{k-1}(s) + \frac{d}{ds}P_{k-1}(s), & 1\leq k \leq \beta.
			\end{cases}
		\end{equation}
	\end{enumerate}
\end{assumption}

\begin{remark}\label{rem:rank_condition_name}
	Based on \cite[Proposition 3.5.16]{Sontag98}, the rank condition given in \eqref{item:controllability} is known as a {\it generalised Kalman-rank condition}. 
	
\end{remark}

\begin{remark}\label{rem:simple_rank_condition}
	In the particular case where $M(s)=M$ and $N(s)=N$ for all $s\in[0,T]$, for some $M\in\R^{d\times d},N\in\R^{d\times n}$, we find that 
	\begin{equation}\nonumber
		P_k(s):=\pa{-1}^k M^kN
	\end{equation}
	and the rank condition \eqref{item:controllability} is reduced to 
	$$\mathrm{rank}\pa{N,-MN,M^2N,\dots,\pa{-1}^\beta M^\beta N}=d$$ 
    which is equivalent to the well-known classical Kalman-rank condition
	$$\mathrm{rank}\pa{N,MN,M^2N,\dots, M^\beta N}=d,$$
	since $\mathrm{rank}\pa{\pmb{r}_0,\pmb{r}_1,...,\pmb{r}_{\beta}} = d \quad \Leftrightarrow \quad \mathrm{rank}\pa{c_0\pmb{r}_0,c_1\pmb{r}_1,...,c_\beta\pmb{r}_{\beta}} = d,$
	where $c_0,\dots,c_\beta$ are non-zero real numbers. 
\end{remark}

\subsection{The controllability of the ODEs}

As Assumption \ref{asmp} is slightly weaker  than the ones imposed in \cite{Sontag98} (in the sense that we impose differentiability of order $\beta-1$  and $\beta$ on $M$ and $N$, respectively, instead of smoothness), we have elected to provide proofs to most of the stated results for the sake of completeness (though some technical proofs have been postponed to Appendix \ref{app:additional}).

For $s\in[0,T]$ fixed we will rely on the homogeneous system 
\begin{equation}
	\label{homogeneous particle system}
	\begin{cases}
		&\gamma_H'(\tau)=M(\tau)\gamma_H(\tau),\ 0\leq s\leq \tau\leq t \leq T \\
		&\gamma_H(s)=x
	\end{cases}
\end{equation}
Since $M\in C([0,T];\R^{d\times d})$, the standard Cauchy--Lipschitz theory guarantees that we can find a unique solution $\gamma_H^{s,x}\in \amit{C^1((s,t))}$ to \eqref{homogeneous particle system}. For any $s,t\in [0,T]$ we define the {\it state transition map} (a two-parameter semigroup) $\Phi(s,\tau):\mathbb{R}^d\to \mathbb{R}^d$ by 
\begin{equation}\label{eq:Phi}
\Phi(s,t)x := \gamma_H^{s,x}(t).
\end{equation}
$\Phi$ has the following properties.
\begin{lemma}\label{lem:2_parameter_semigroup} 
	Assume that $M\in C([0,T];\R^{d\times d})$ and let $\Phi\pa{\cdot,\cdot}:[0,T]\times [0,T]\times \R^d \to \R^d$ defined in \eqref{eq:Phi}. Then
	\begin{enumerate}[(i)]
		\item\label{item:semigroup_property} $\Phi(\tau,t)\circ \Phi(s,\tau)=\Phi(s,t)$ for any $s,t,\tau\geq 0$.
		\item\label{item:P(t,t)} $\Phi(t,t)=\mathbf{id}$ for any $t\geq 0$.
		\item\label{item:semigroup_is_matrix} Each $\Phi(s,t)$ is a linear operator and as such has a matrix representation. 
		\item\label{item:semigroup_bound} $\norm{\Phi(s,t)}\leq e^{M_1\abs{t-s}}$ where $M_1:=\sup_{t\in[0,T]}\norm{M(t)}$ and $\|\cdot\|$ denotes the operator norm. 
		\item\label{item:differentiability_of_semigroup} $\Phi(\cdot,\cdot)\in C^{1}\pa{\rpa{0,T}\times \rpa{0,T}}$. Identifying $\Phi$ with its matrix representation we find that
		$$\frac{\dd}{\dd t}\Phi(s,t) = M(t)\Phi(s,t),\quad\text{and}\quad \frac{\dd}{\dd s}\Phi(s,t) = -\Phi(s,t)M(s).$$
		If in addition we assume that for $t'\in [0,T)$, $M\in C^{\beta}(t',T)$ for some $\beta\in\N$, then we obtain $\Phi(\cdot,\cdot)\in C^{\beta+1}\pa{\pa{t',T}\times \pa{t',T}}$.
	\end{enumerate}
\end{lemma}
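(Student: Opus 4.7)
Items \eqref{item:P(t,t)}, \eqref{item:semigroup_property} and \eqref{item:semigroup_is_matrix} will all follow quickly from the Cauchy--Lipschitz theory for the linear ODE \eqref{homogeneous particle system}. Item \eqref{item:P(t,t)} is immediate from the definition \eqref{eq:Phi}. For \eqref{item:semigroup_property}, I would fix $s,\tau\in[0,T]$ and $x\in\R^d$ and observe that both $t\mapsto \gamma_H^{s,x}(t)$ and $t\mapsto \gamma_H^{\tau,\gamma_H^{s,x}(\tau)}(t)$ satisfy $\gamma'=M(t)\gamma$ with the same value at $t=\tau$; uniqueness forces them to coincide, giving the semigroup identity. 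For \eqref{item:semigroup_is_matrix}, if $\gamma_1,\gamma_2$ solve \eqref{homogeneous particle system} with initial data $x_1,x_2$ at time $s$, then $a\gamma_1+b\gamma_2$ solves it with initial datum $ax_1+bx_2$, so uniqueness yields $\Phi(s,t)(ax_1+bx_2)=a\Phi(s,t)x_1+b\Phi(s,t)x_2$, hence $\Phi(s,t)$ is linear and is represented by a matrix.

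For the bound \eqref{item:semigroup_bound}, I would write the integral form $\gamma_H^{s,x}(t)=x+\int_s^tM(r)\gamma_H^{s,x}(r)\,\dd r$, take norms to obtain $|\gamma_H^{s,x}(t)|\le |x|+M_1\left|\int_s^t|\gamma_H^{s,x}(r)|\,\dd r\right|$, and apply Gronwall's inequality to conclude $|\Phi(s,t)x|\le e^{M_1|t-s|}|x|$ for every $x$, giving the operator-norm estimate.

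The bulk of the work lies in \eqref{item:differentiability_of_semigroup}. For the derivative in $t$, I would note that each column of the matrix $\Phi(s,\cdot)$ solves \eqref{homogeneous particle system}, so $\partial_t \Phi(s,t)=M(t)\Phi(s,t)$ holds on $[0,T]$, and continuity of $M$ together with continuity of $\Phi$ (in both variables, via the Gronwall bound applied to differences of trajectories) yields $\Phi\in C^1$ jointly. For the derivative in $s$, I would differentiate the identity $\Phi(\tau,t)=\Phi(s,t)\,\Phi(\tau,s)$ from \eqref{item:semigroup_property} with respect to $s$, using that the left-hand side is independent of $s$, to obtain
\[
0=\partial_s\Phi(s,t)\,\Phi(\tau,s)+\Phi(s,t)\,M(s)\Phi(\tau,s).
\]
Choosing $\tau=s$ and invoking \eqref{item:P(t,t)} gives $\partial_s\Phi(s,t)=-\Phi(s,t)M(s)$. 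Joint continuity of this expression in $(s,t)$ then upgrades $\Phi$ to $C^1$ on $[0,T]\times[0,T]$.

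Finally, for the improved regularity when $M\in C^\beta(t',T)$, I would bootstrap: the identity $\partial_t\Phi(s,t)=M(t)\Phi(s,t)$ shows that if $M\in C^k$ and $\Phi\in C^k$ in $t$, then $\partial_t\Phi$ is $C^k$ in $t$, hence $\Phi$ is $C^{k+1}$ in $t$ on $(t',T)$; a symmetric argument via $\partial_s\Phi(s,t)=-\Phi(s,t)M(s)$ gives the same regularity in $s$. Iterating from $k=0$ up to $k=\beta$ and combining the mixed derivatives through the two ODEs delivers $\Phi\in C^{\beta+1}((t',T)\times(t',T))$. The main subtlety I anticipate is handling the joint $(s,t)$-regularity (as opposed to separate regularity in each variable); this I would address by expressing mixed partials directly through repeated applications of the two ODE identities and using continuity of the resulting expressions in both arguments.
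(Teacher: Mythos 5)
Your treatment of items (i)--(iv) and of the $t$-derivative in (v) coincides with the paper's own proof: uniqueness of solutions to \eqref{homogeneous particle system} gives the two-parameter semigroup law and linearity, Gr\"onwall gives the operator bound, and the columns of $\Phi(s,\cdot)$ solving the homogeneous ODE give $\partial_t\Phi(s,t)=M(t)\Phi(s,t)$; the bootstrap to $C^{\beta+1}$ via the two ODE identities is also the paper's argument.

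The one genuine gap is in your derivation of the $s$-derivative. You differentiate the identity $\Phi(\tau,t)=\Phi(s,t)\,\Phi(\tau,s)$ with respect to $s$, but applying the product rule to the right-hand side already presupposes that $s\mapsto\Phi(s,t)$ is differentiable in its \emph{first} argument, which is precisely the statement you are trying to establish; at that stage you only know differentiability in the second argument. The step is fixable in one line, and the fix is what the paper does: by (i)--(ii) the matrix $\Phi(\tau,s)$ is invertible with $\Phi(\tau,s)^{-1}=\Phi(s,\tau)$, so $\Phi(s,t)=\Phi(\tau,t)\,\Phi(\tau,s)^{-1}$; since $s\mapsto\Phi(\tau,s)$ is $C^1$ by the already-proved ODE in the second argument and matrix inversion is smooth on the set of invertible matrices, $s\mapsto\Phi(s,t)$ is differentiable. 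Only after this may one differentiate the identity (equivalently, differentiate $\Phi(s,t)\Phi(t,s)=\mathbf{id}$ and use the product rule) to conclude $\partial_s\Phi(s,t)=-\Phi(s,t)M(s)$. With that justification inserted, the remainder of your argument --- joint continuity of both partial derivatives yielding $\Phi\in C^1\pa{[0,T]\times[0,T]}$, and the iteration giving $C^{\beta+1}$ on $\pa{t',T}\times\pa{t',T}$ --- matches the paper.
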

    The content of this lemma is well-known results for experts, but for the sake of completeness, we provide its proof in Appendix \ref{app:additional}. It is worth to mention that one can realise the transition map $\Phi(s,t)$ via the map $\Psi(t):=\Psi(0,t)$ which generates the fundamental solution of $x'(t)=M(t)x(t)$ with $x(0)=x$. Indeed, one finds that 
    	$$\Phi(s,t)=\Phi(0,t)\circ \Phi(s,0)=\Psi(t)\Psi(s)^{-1}.$$

 Another essential tool in our analysis is the so-called {\it end-point mapping} that we define as follows.

\begin{definition}
	\label{end-point mapping definition}
	For a given $x\in \mathbb{R}^d$ and $s,t\in [0,T]$, the \emph{end-point map} $E_{s,t}^x:L^p(s,t)\to \mathbb{R}^d$ is defined by
	\begin{equation}\nonumber
		E_{s,t}^x(\alpha):=\gamma^{s,x}_\alpha(t)
	\end{equation}
	where $\gamma^{s,x}_\alpha$ is the solution of \eqref{eq:controlled} with initial condition $\gamma^{s,x}_\alpha(s)=x$. That is to say, 
	\begin{equation}\label{eq:def_of_end_point}
		\gamma^{s,x}_{\alpha}(t)=E_{s,t}^x(\alpha):=\Phi(s,t)x+\int_s^t \Phi(\tau,t)N(\tau)\alpha(\tau)\dd\tau.
	\end{equation}
\end{definition}
\begin{remark}\label{rem:gamma_s_x_alpha_is_a_solution}
	By Lemma \ref{lem:2_parameter_semigroup}\eqref{item:differentiability_of_semigroup}, we have that $\Phi$ is continuously differentiable on $[0,T]\times [0,T]$ and $N\in C([0,T];\R^{d\times n})$ by assumption. This implies that $\Phi(\cdot,t)N(\cdot)\alpha(\cdot)\in L^1\pa{s,t}$, for all $t\in (s,T)$ and as such $\gamma^{s,x}_\alpha$ is absolutely continuous and satisfies
\begin{align*}	
	\frac{\dd}{\dd t}\gamma_{\alpha}^{s,x}(t)&=M(t)\Phi(s,t)x+ \Phi(t,t)N(t)\alpha(t)+\int_s^t \frac{\dd}{\dd t}\Phi(\tau,t)N(\tau)\alpha(\tau)\dd\tau\\
	&=M(t)\pa{\Phi(s,t)x+\int_s^t \Phi(\tau,t)N(\tau)\alpha(\tau)\dd\tau}+N(t)\alpha(t)=M(t)\gamma_{\alpha}^{s,x}(t)+N(t)\alpha(t).
\end{align*}
	In particular, we see that $\gamma_{\alpha}^{s,x} \in W^{1,p}\pa{s,t}$.
\end{remark}
Recall that our goal is to find for any $x,y\in\R^d$ a pair $(\gamma,\alpha)\in W^{1,p}(0,T)\times L^{p}((0,T);\R^{n})$ such that \eqref{eq:controlled} holds. This can be formulated using the end-point mapping.
\begin{definition}
	\label{definition of controllability particle}
	We say that the system \eqref{eq:controlled} is {\it controllable} if for any $x,y\in \mathbb{R}^d\times \mathbb{R}^d$ there exists $\alpha\in L^p((0,T);\mathbb{R}^n)$ such that $E_{0,T}^{x}(\alpha)=y$. Equivalently, the system is controllable if $E_{0,T}^{x}$ is {\it surjective} for any $x\in \mathbb{R}^d$.
\end{definition}

Our main result for this short subsection is the following.

\begin{theorem}
\label{thm:controllability}
	Under Assumption \ref{asmp} the system \eqref{eq:controlled} is controllable.
\end{theorem}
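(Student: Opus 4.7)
The plan is to reduce controllability to surjectivity of a single linear operator, and then to identify its cokernel with the annihilator of the columns of $\mathcal{R}$, which is trivial by the rank hypothesis.

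First I would fix $x\in\R^d$ and rewrite the end-point map using \eqref{eq:def_of_end_point} as
\begin{equation*}
E_{0,T}^x(\alpha) = \Phi(0,T)x + L(\alpha),\qquad L(\alpha):=\int_0^T\Phi(\tau,T)N(\tau)\alpha(\tau)\dd\tau.
\end{equation*}
Since the affine shift $\Phi(0,T)x$ is independent of $\alpha$, $E_{0,T}^x$ is surjective if and only if the bounded linear operator $L:L^p((0,T);\R^n)\to\R^d$ has $\mathrm{Im}(L)=\R^d$. As $\mathrm{Im}(L)$ is a linear subspace of $\R^d$, it suffices to show its orthogonal complement is trivial. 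So I take $v\in\R^d$ with $v^\top L(\alpha)=0$ for every $\alpha\in L^p((0,T);\R^n)$, set $\Psi(\tau):=v^\top\Phi(\tau,T)N(\tau)\in\R^{1\times n}$, and observe that $\int_0^T\Psi(\tau)\alpha(\tau)\dd\tau=0$ for all $\alpha$ forces $\Psi=0$ in $L^{p'}$; by Lemma \ref{lem:2_parameter_semigroup}\eqref{item:differentiability_of_semigroup} and the continuity of $N$, $\Psi$ is continuous on $[0,T]$, hence $\Psi\equiv 0$ on $[0,T]$.

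Next I would exploit Assumption \ref{asmp}\eqref{item:differentiability_of_matrices}: on $[t',T]$ both $M,N$ (and hence, by Lemma \ref{lem:2_parameter_semigroup}\eqref{item:differentiability_of_semigroup}, $\Phi(\cdot,T)$) are $C^\beta$, so $\Psi\in C^\beta([t',T])$ and every derivative vanishes identically on $[t',T]$. A short induction, using $\frac{\dd}{\dd\tau}\Phi(\tau,T)=-\Phi(\tau,T)M(\tau)$ together with the Leibniz rule, yields
\begin{equation*}
\Psi^{(k)}(\tau) = v^\top\Phi(\tau,T)P_k(\tau),\qquad 0\leq k\leq\beta,
\end{equation*}
precisely because the recursion in \eqref{formula of matrix polynomials p_i} is designed to match the action of $\frac{\dd}{\dd\tau}$ on $\Phi(\tau,T)P_{k-1}(\tau)$. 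Evaluating at $\tau=T_-$ and using $\Phi(T,T)=\mathbf{id}$, I obtain $0=\Psi^{(k)}(T_-)=v^\top\pmb{r}_k$ for each $k\in\{0,\dots,\beta\}$. This is exactly $v^\top\mathcal{R}=0$, so the rank condition \ref{asmp}\eqref{item:controllability} forces $v=0$. Hence $\mathrm{Im}(L)=\R^d$ and, $x$ being arbitrary, \eqref{eq:controlled} is controllable.

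The only delicate points are the upgrade from ``$\Psi$ annihilates every $L^p$ control'' to ``$\Psi\equiv 0$ pointwise'', which is handled by the continuity of $\Psi$, and the clean inductive formula for $\Psi^{(k)}$, which hinges on matching the derivative identity for $\Phi$ with the precise recursion chosen for $P_k$. Beyond these, everything reduces to linear algebra via the generalised Kalman rank condition.
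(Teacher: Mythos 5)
Your proposal is correct and follows essentially the same route as the paper's proof: reduce to surjectivity of $E^0_{0,T}$, show any $v$ annihilating its image satisfies $v^\top\Phi(\cdot,T)N(\cdot)\equiv 0$, then use the inductive identity $\frac{\dd^k}{\dd s^k}\bigl(\Phi(s,T)N(s)\bigr)=\Phi(s,T)P_k(s)$ and the limit $s\to T^-$ to get $v^\top\mathcal{R}=0$, contradicting the rank condition. The only cosmetic differences are that you argue directly via the triviality of the cokernel (and $L^p$--$L^{p'}$ duality) rather than by contradiction with the explicit test control $\wt\alpha(t)=[b^\top\Phi(t,T)N(t)]^\top$, which changes nothing of substance.
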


\begin{proof}
	The proof follows ideas presented in \cite{Sontag98}. As the conditions outlined in Assumption \ref{asmp} are slightly weaker than those given in \cite{Sontag98}, we give the details here.
	
	We start by noticing that since $E^x_{0,T}(\alpha) = \Phi(0,T)x + E^0_{0,T}(\alpha)$ (where we used the linearity of $\Phi$), it is enough to show that $E^0_{0,T}:L^p\pa{(0,T);\R^n}\to\R^{d}$ is surjective to conclude the controllability of the system \eqref{eq:controlled}.  
	
	Assume by contradiction that $E^0_{0,T}$ is not surjective. As $E^0_{0,T}$ is a linear operator, its image is a subspace of $\R^d$ which is of dimension less than $d$. Consequently, we can find  $0\neq b\in \mathbb{R}^d$ such that $b^\top E^0_{0,T}(\alpha)= 0$ for any $\alpha\in L^p\pa{(0,T);\R^n}$. In other words, 
	\begin{equation}
		\nonumber 
		\int_0^{T}[b^\top\Phi(\tau,T)N(s)]\cdot \alpha(\tau)\dd\tau = 0
	\end{equation}
	for any $\alpha\in L^p((0,T);\R^n)$. Plugging $\wt{\alpha}(t):=[b^\top\Phi(t,T)N(t)]^\top$ (which is continuous and as such in $L^p\pa{0,T;\R^n}$) in the above we conclude that 
	\begin{equation}\nonumber
		\int_0^T \abs{\wt{\alpha}(\tau)}^2\dd\tau=\int_0^{T}|b^\top\Phi(\tau,T)N(\tau)|^2\dd\tau = 0.
	\end{equation}
	This implies, due to the continuity of $\wt{\alpha}$, that 
	\begin{equation}
		\label{commutativity condition proposition - eq 2}
		b^\top\Phi(s,T)N(s)=0,\qquad \forall s\in [0,T].
	\end{equation}
	We will use this identity to show that
	$$b^\top\mathcal{R}=0,$$
	where $\mathcal{R}$ is defined in \eqref{item:controllability} of Assumption \ref{asmp}. This will contradict the assumption that $\mathrm{rank}\pa{\mathcal{R}}=d$.
	
	As \eqref{commutativity condition proposition - eq 2} holds for all $s\in (0,T)$ we find that
	$$b^\top\frac{\dd^k}{\dd s^k}\pa{\Phi(s,T)N(s)}=0,\qquad \forall s\in (t',T),\;0\leq k \leq \beta,$$
	where we have used the fact that $\Phi(s,T)$ and $N(s)$ are differentiable $\beta$ times on $(t',T)$.
	
	We know that for any $s\in (t',T)$
	$$\frac{\dd}{\dd s}\pa{\Phi(s,T)N(s)} = -\Phi(s,T)M(s)N(s)+\Phi(s,t)N'(s) = \Phi(s,T)P_1(s),$$
	 as $\frac{\dd}{\dd s}\Phi(s,T)=-\Phi(s,T)M(s)$.
	Assuming that 
	$$\frac{\dd^j}{\dd s^j}\pa{\Phi(s,T)N(s) } = \Phi(s,T)P_j(s)$$ 
	for some $1\leq j \leq  \beta-1$ we find that
	$$\frac{\dd^{j+1}}{\dd s^{j+1}}\pa{\Phi(s,T)N(s)} = \frac{\dd}{\dd s}\pa{\Phi(s,T)P_j(s) }= -\Phi(s,T)M(s)P_j(s) + \Phi(s,T)\frac{\dd}{\dd s}P_j(s) = \Phi(s,T)P_{j+1}(s).$$
	Consequently, we conclude that for any $0\leq k\leq \beta$
	$$b^\top \pmb{r}_k = \lim_{s\to T^-}b^\top\Phi(s,T)P_{k}(s) = \lim_{s\to T^-}b^\top\frac{\dd^k}{\dd s^k}\pa{\Phi(s,T)N(s)}=0,$$
	from which we conclude that $b^\top\mathcal{R}=0$ and with it the desired contradiction.
\end{proof}

An immediate corollary of this theorem is the first part of Theorem \ref{thm:optimal_control_cost}.
\begin{corollary}\label{cor:pcadm_is_not_empty}
	Let $M:[0,T]\to \mathbb{R}^{d\times d}$, $ N:[0,T]\to \mathbb{R}^{d\times n}$ be such that Assumption \ref{asmp} holds. Then for any $x,y\in \R^d$ we have that $\pcadm(x,y)\not=\emptyset$.
\end{corollary}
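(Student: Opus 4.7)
The plan is essentially a direct unpacking of Theorem \ref{thm:controllability} together with the regularity information already recorded for trajectories generated by the end-point map. Fix $x,y \in \R^d$.

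First, I would invoke Theorem \ref{thm:controllability}: under Assumption \ref{asmp} the system \eqref{eq:controlled} is controllable, so by Definition \ref{definition of controllability particle} the end-point map $E_{0,T}^{x}:L^p((0,T);\R^n)\to \R^d$ is surjective. In particular, there exists some $\alpha \in L^p((0,T);\R^n)$ such that
$$E_{0,T}^x(\alpha) = \Phi(0,T)x + \int_0^T \Phi(\tau,T)N(\tau)\alpha(\tau)\dd\tau = y.$$

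Next, I would define the candidate trajectory $\gamma := \gamma^{0,x}_\alpha$ via the explicit formula \eqref{eq:def_of_end_point}, that is, $\gamma(t) = \Phi(0,t)x + \int_0^t \Phi(\tau,t)N(\tau)\alpha(\tau)\dd\tau$. By Remark \ref{rem:gamma_s_x_alpha_is_a_solution}, this $\gamma$ belongs to $W^{1,p}(0,T;\R^d)$ and satisfies $\gamma'(t) = M(t)\gamma(t) + N(t)\alpha(t)$ for a.e.\ $t\in(0,T)$. The boundary conditions are immediate: the formula yields $\gamma(0) = \Phi(0,0)x = x$ via Lemma \ref{lem:2_parameter_semigroup}\eqref{item:P(t,t)}, while $\gamma(T) = E_{0,T}^x(\alpha) = y$ by the choice of $\alpha$.

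Hence $(\gamma,\alpha) \in W^{1,p}(0,T;\R^d) \times L^p(0,T;\R^n)$ solves \eqref{eq:intro_cont} with the prescribed endpoints, showing that $(\gamma,\alpha) \in \pcadm(x,y)$ and in particular the set is nonempty. There is no genuine obstacle here — all the substance has been absorbed into Theorem \ref{thm:controllability} and the formula \eqref{eq:def_of_end_point}; the corollary is a one-line consequence once those are in place.
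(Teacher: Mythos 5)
Your proposal is correct and follows the same route as the paper, which presents this corollary as an immediate consequence of Theorem \ref{thm:controllability} (surjectivity of $E_{0,T}^{x}$), with the $W^{1,p}$ regularity and ODE property of $\gamma^{0,x}_{\alpha}$ supplied by Remark \ref{rem:gamma_s_x_alpha_is_a_solution} exactly as you use them. Your write-up simply makes the routine details explicit; nothing is missing.
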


\begin{remark}
	It is worth to mention that the rank condition written in Assumption \ref{asmp}\eqref{item:controllability} is a sufficient but not necessary condition for the controllability of our system. A necessary and sufficient condition is expressed via the \textit{controllability Gramian}, $G:=\int_0^T\Phi(s,T)N(s)N(s)^\top\Phi(s,T)^\top\dd s$. We refer the reader to \cite{Sontag98} for more information on this matter.
\end{remark}

Before moving to a more detailed investigation of the controllability which will result in the second part of Theorem \ref{thm:optimal_control_cost}, we mention a simpler setting for the system \eqref{eq:controlled} in which the rank condition is more tractable.

\begin{lemma}\label{lem:simpler_rank_condition}
 	Let $M:[0,T]\to \mathbb{R}^{d\times d}$, $ N:[0,T]\to \mathbb{R}^{d\times n}$ be such that \eqref{item:continuity_of_matrices} and \eqref{item:differentiability_of_matrices} in Assumption \ref{asmp} hold. Assume in addition that for all $t_1,t_2\in [0,T]$ we have that 
 	\begin{equation}\label{eq:commutativity_of_M}
 		M\pa{t_1}M\pa{t_2} = M(t_2)M(t_1).
 	\end{equation}
 	Then in the rank condition \eqref{item:controllability} we have that  
 	$$\pmb{r}_k=
        \sum_{m=0}^k \begin{pmatrix}
 		k \\ m
 	\end{pmatrix}B_m\pa{-M(T_-),\dots,-M^{(m-1)}(T_-)}N^{(k-m)}(T_-)
     $$
 	where
 	$$B_k(x_1,\dots,x_k) =\begin{cases}
 		1,& k=0,\\
 		k! \sum_{\amit{j_1+2j_2+\dots+kj_k=k}} \prod_{i=1}^k \frac{x_i^{j_i}}{\pa{i!}^{j_i}j_i! }, & k\geq 1,
 	\end{cases} $$
 	are the $k^{th}$ complete Bell polynomials. 
\end{lemma}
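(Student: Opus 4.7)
The plan is to derive the closed-form expression for $P_k(s)$ by exploiting the commutativity hypothesis to realise the state transition map $\Phi(s,T)$ as a matrix exponential, and then combining Leibniz's rule with the Faà di Bruno formula for exponentials. Under the assumption $M(t_1)M(t_2)=M(t_2)M(t_1)$ for all $t_1,t_2$, a classical argument shows that
$$\Phi(s,t)\,=\,\exp\!\pa{\int_s^t M(\tau)\,\dd\tau},$$
since this expression satisfies $\partial_t \Phi = M(t)\Phi$ with $\Phi(s,s)=I$; commutativity is precisely what allows one to differentiate the exponential while pulling $M(t)$ outside. Setting $H(s):=\int_s^T M(\tau)\,\dd\tau$, so that $\Phi(s,T)=e^{H(s)}$, one obtains $H'(s)=-M(s)$ and $H^{(k)}(s)=-M^{(k-1)}(s)$ for $k\geq 1$, valid on $(t',T)$ by Assumption \ref{asmp}\eqref{item:differentiability_of_matrices}.

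Next I would establish, by induction on $k$, the identity
$$P_k(s) \,=\, \Phi(s,T)^{-1}\,\frac{\dd^k}{\dd s^k}\rpa{\Phi(s,T)\,N(s)},$$
which is already implicit in the proof of Theorem \ref{thm:controllability}. The base case $k=0$ is trivial. For the inductive step, differentiating the right-hand side and using $\frac{\dd}{\dd s}\Phi(s,T)=-\Phi(s,T)M(s)$ from Lemma \ref{lem:2_parameter_semigroup}\eqref{item:differentiability_of_semigroup}, together with the commutativity of $\Phi(s,T)$ and $M(s)$ (immediate from the exponential form), reproduces the recursion \eqref{formula of matrix polynomials p_i}.

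With this identity in hand, Leibniz's rule gives
$$\frac{\dd^k}{\dd s^k}\rpa{e^{H(s)}N(s)} \,=\, \sum_{m=0}^k \binom{k}{m}\frac{\dd^m}{\dd s^m}\rpa{e^{H(s)}}\,N^{(k-m)}(s),$$
and the Faà di Bruno formula for the exponential of a matrix-valued map,
$$\frac{\dd^m}{\dd s^m}\rpa{e^{H(s)}} \,=\, e^{H(s)}\,B_m\!\pa{H'(s),\dots,H^{(m)}(s)},$$
combined with $H^{(j)}(s)=-M^{(j-1)}(s)$ and left-multiplication by $\Phi(s,T)^{-1}=e^{-H(s)}$, yields the desired closed-form expression for $P_k(s)$. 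Evaluating at $s=T_-$ then produces the formula for $\pmb{r}_k$.

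The main obstacle is the justification of the Faà di Bruno formula for the matrix exponential: it holds only when $H(s),H'(s),H''(s),\dots$ pairwise commute at every fixed $s$. This is not automatic for matrix-valued functions, but it follows from the hypothesis: differentiating $[M(t_1),M(t_2)]=0$ in $t_1$ and in $t_2$ yields $[M^{(i)}(s),M^{(j)}(s)]=0$ for all $i,j\geq 0$, and since $H(s)$ is an integral of the pairwise-commuting family $\br{M(\tau)}_{\tau}$ it also commutes with each $M^{(j)}(s)$, which is precisely the commutativity required for the scalar-type computation with exponentials to go through verbatim in the matrix setting.
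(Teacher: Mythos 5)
Your proposal is correct and follows essentially the same route as the paper: under the commutativity hypothesis you write $\Phi(s,T)=e^{\int_s^T M(\theta)\dd\theta}$, use the identity $\frac{\dd^k}{\dd s^k}\pa{\Phi(s,T)N(s)}=\Phi(s,T)P_k(s)$ from the controllability proof, and combine Leibniz's rule with the Fa\`a di Bruno formula for the matrix exponential (justified by the pairwise commutation of $M^{(i)}(s)$ and $H(s)$), before letting $s\to T^-$. Your explicit bookkeeping $H^{(j)}(s)=-M^{(j-1)}(s)$, together with the observation that the inductive identity for $P_k$ needs no commutativity, is if anything slightly more careful than the paper's own write-up, whose listed Bell-polynomial arguments are notationally inconsistent between statement and proof.
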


\begin{proof}
	The commutation relation \eqref{eq:commutativity_of_M} implies that the solution to the homogenous system \eqref{homogeneous particle system} is given by
	$$\gamma_H(t) = e^{\int_s^t M(\theta)\dd\theta}x$$
	from which we conclude that the matrix representation of $\Phi(s,t)$ is 
	$$S(s,t):=e^{\int_s^t M(\theta)\dd\theta}.$$
	Moreover, \eqref{eq:commutativity_of_M} implies that for any $k,j\in \br{0,\dots,\beta-1}$
		\begin{equation}\nonumber
		M^{(k)}\pa{t_1}M^{(j)}\pa{t_2} = M^{(j)}(t_2)M^{(k)}(t_1),\qquad \forall t_1,t_2\in [0,T],
	\end{equation}
	which in turn implies that $M^{(k)}(s)S\pa{s,t} =S\pa{s,t} M^{(k)}(s) $ for any $t,s\in [0,T]$ and any $0\leq k \leq \beta-1$. Much like the proof of Fa\`a di Bruno's formula for $g(x)=e^{f(x)}$ we find that 
	$$ \frac{\dd^k}{\dd s^k}S(s,t)=S(s,t)B_k\pa{-M(s),\dots,-M^{(k-1)}(s)}$$
	for any $0\leq k \leq \beta$. As we saw in the proof of Theorem \ref{thm:controllability}
\begin{align*}	
	\pmb{r}_k &= \lim_{s\to T_-}\frac{d^k}{ds^k}\pa{\Phi(s,T)N(s)}= \lim_{s\to T_-}\frac{d^k}{ds^k}\pa{S(s,T)N(s)}
	=\sum_{m=0}^k \begin{pmatrix}
		k \\ m
	\end{pmatrix}\lim_{s\to T_-}\frac{\dd^m}{\dd s^m}S(s,T)N^{(k-m)}(T_-)\\
	&=\sum_{m=0}^k \begin{pmatrix}
		k \\ m
	\end{pmatrix}B_m\pa{-M(s),\dots,-M^{(m-1)}(s)}N^{(k-m)}(T_-),
\end{align*}
	which is the desired result. 
\end{proof}

\subsection{The cost function $(x,y)\mapsto c_p(x,y)$}
As was mentioned in \S\ref{sec:intro}, the controllability of the system \eqref{eq:controlled} will not only help us show that $\cadm\pa{\mu,\nu}$ is not empty, but is also vital to the emergence of our proposed cost function, $\R^{d}\times\R^{d}\ni(x,y)\mapsto c_p(x,y)$. This section is devoted to the study of this function, expressed in the second part of Theorem \ref{thm:optimal_control_cost}. As before, we will assume that the conditions in Assumption \ref{asmp} are satisfied throughout this subsection.

We start with a few simple properties of the end-point mapping.
\begin{lemma}
	\label{properties of endpoint mapping}
	Let $x\in \mathbb{R}^{d}$ and let $0\leq s<t<\infty$. Recall the definition of the end-point mapping from Definition \ref{end-point mapping definition}.
	\begin{enumerate}[(i)]
		\item\label{item:end_point_is_weakly_cont} $E_{s,t}^x$ is weakly continuous, i.e. if $\alpha_m \underset{m\to\infty}{\rightharpoonup} \alpha$ in $L^p(s,t;\mathbb{R}^n)$  then $E_{s,t}^x(\alpha_m)\underset{m\to\infty}{\longrightarrow} E_{s,t}^x(\alpha)$. 
		\item\label{item:end_point_is_diff} $E_{s,t}^x$ continuously differentiable on $L^p(s,t;\mathbb{R}^n)$. Moreover, its Fr\'echet derivative at any $\alpha\in L^p(s,t;\mathbb{R}^n)$ is given by
		\begin{equation}
			\label{Gateaux differential of E^X}
			D_\alpha E_{s,t}^x(u)=E^0_{s,t}(u)=\int_s^t \Phi(\tau,t)N(\tau)u(\tau)\dd\tau.
		\end{equation}
	\end{enumerate}
\end{lemma}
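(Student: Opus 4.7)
The plan exploits the explicit affine structure of the end-point map. From the representation \eqref{eq:def_of_end_point} I have
\[
E_{s,t}^x(\alpha) = \Phi(s,t)x + L(\alpha), \qquad L(\alpha) := \int_s^t \Phi(\tau,t) N(\tau) \alpha(\tau)\dd\tau,
\]
so $E_{s,t}^x$ is an affine function of $\alpha$ whose linear part $L$ does not depend on $x$. Both parts of the lemma will follow at once from the single observation that $L:L^p(s,t;\R^n)\to\R^d$ is a bounded linear operator. Note in particular that $L = E_{s,t}^0$, since the homogeneous contribution $\Phi(s,t)\cdot 0$ vanishes.

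The first step is to verify the boundedness of $L$. By Lemma \ref{lem:2_parameter_semigroup}\eqref{item:differentiability_of_semigroup} the matrix-valued map $\tau\mapsto\Phi(\tau,t)$ is continuous (in fact $C^1$) on $[s,t]$, and $N\in C([0,T];\R^{d\times n})$ by Assumption \ref{asmp}\eqref{item:continuity_of_matrices}. Hence the entries of $\Phi(\tau,t)N(\tau)$ are bounded on $[s,t]$. Writing the $i$-th row of this matrix as $\psi_i(\tau)^\top$, each component $L_i(\alpha) = \int_s^t \psi_i(\tau)^\top \alpha(\tau)\dd\tau$ is a bounded linear functional on $L^p(s,t;\R^n)$ because $\psi_i\in L^\infty(s,t;\R^n)\hookrightarrow L^{p'}(s,t;\R^n)$, with $p'$ the H\"older conjugate of $p$. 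A quantitative bound $\norm{L(\alpha)}\leq C(t-s)^{1/p'}\norm{\alpha}_{L^p}$ with $C$ depending on $\sup_{[s,t]}\norm{\Phi(\tau,t)N(\tau)}$ is then immediate from H\"older's inequality.

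With this, part \eqref{item:end_point_is_weakly_cont} is a direct application of the definition of weak convergence: if $\alpha_m \rightharpoonup \alpha$ in $L^p(s,t;\R^n)$, then $L_i(\alpha_m)\to L_i(\alpha)$ for every $i=1,\dots,d$ (since the $\psi_i$ are in the dual space), hence $E_{s,t}^x(\alpha_m)=\Phi(s,t)x + L(\alpha_m)\to \Phi(s,t)x + L(\alpha) = E_{s,t}^x(\alpha)$ in $\R^d$. Part \eqref{item:end_point_is_diff} is even simpler: the affine identity gives
\[
E_{s,t}^x(\alpha + u) - E_{s,t}^x(\alpha) - L(u) = 0
\]
for every $\alpha, u\in L^p(s,t;\R^n)$, so the remainder vanishes exactly and $L$ is the Fr\'echet derivative of $E_{s,t}^x$ at every $\alpha$. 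Since this derivative is constant in $\alpha$, $E_{s,t}^x$ is trivially continuously differentiable, which yields \eqref{Gateaux differential of E^X}.

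Honestly, there is no genuine obstacle in this lemma: the content is essentially a reformulation of the bounded-linear structure of Duhamel's convolution against a continuous kernel. The only care required is in invoking the correct regularity of $\Phi(\cdot,t)$ from Lemma \ref{lem:2_parameter_semigroup}, together with the continuity of $N$, to ensure that the kernel $\tau\mapsto\Phi(\tau,t)N(\tau)$ belongs to $L^{p'}$ on $[s,t]$ entrywise; everything else is automatic.
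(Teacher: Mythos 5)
Your proposal is correct and follows essentially the same route as the paper: exploiting the affine structure $E_{s,t}^x(\alpha)=\Phi(s,t)x+E_{s,t}^0(\alpha)$, using continuity of $\Phi$ and $N$ (hence the kernel lies in the dual space) to get weak continuity componentwise, and identifying the constant Fr\'echet derivative with the bounded linear map $E^0_{s,t}$ via H\"older's inequality. No gaps.
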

\begin{proof}
	\leavevmode
	\begin{enumerate}[(i)]
		\item For $ 1\leq k \leq n$ we denote by $\alpha_{k,m}$ and $\alpha_k$ the $k^{th}$ components of $\alpha_m$ and $\alpha$, respectively. Since $(\alpha_m)_{m\in\N}$ converges weakly to $\alpha$ in $L^p(s,t;\mathbb{R}^n)$ we have that $(\alpha_{m,k})_{m\in\N}$ converges weakly to $\alpha_k$ in $L^p\pa{s,t}$ for any $1\leq k\leq n$. Since $\Phi(\cdot,\cdot)$ and $N$ are continuous, we find that for any $1\leq k\leq n$
	\begin{align*}	
		\pa{E_{s,t}^x(\alpha_m)}_k & = \pa{\Phi(s,t)x}_k+\sum_{j=1}^d \sum_{l=1}^n\int_s^t \Phi(\tau,t)_{kj}N(\tau)_{jl}\alpha_{m,l}(\tau)\dd\tau\\ 
		&\underset{m\to\infty}{\longrightarrow }\pa{\Phi(s,t)x}_k+\sum_{j=1}^d \sum_{l=1}^n\int_s^t \Phi(\tau,t)_{kj}N(\tau)_{jl}\alpha_{l}(\tau)\dd\tau =\pa{E_{s,t}^x(\alpha)}_k,
	\end{align*}
		which shows the desired result.
		\item We notice that for any $\alpha \in L^p(s,t;\mathbb{R}^n)$ 
		$$E^{x}_{s,t}(\alpha) = \Phi(s,t)x + E^{0}_{s,t}(\alpha)$$
		and that for any scalar $a$ and $\alpha,\beta \in L^p(s,t;\mathbb{R}^n)$
		$$E^{0}_{s,t}(a\alpha+\beta) = aE^{0}_{s,t}(\alpha)+E^{0}_{s,t}(\beta).$$
		In other words, $E^{x}_{s,t}$ is an affine mapping and consequently its Fr\'echet derivative at any $\alpha$ is given by
		$$D_{\alpha}E^x_{s,t}[u] =E^{0}_{s,t}(u). $$
		Since $\Phi$ and $N$ are bounded (by Assumption \ref{asmp} and Lemma \ref{lem:2_parameter_semigroup}) we find that 
		$$\abs{E^{0}_{s,t}(u)}\leq \norm{\Phi}_{L^\infty\pa{[0,T]\times [0,T]}}\norm{N}_{L^\infty\pa{[0,T]}}\norm{u}_{L^p\pa{s,t;\R^n}}\abs{t-s}^{\frac{1}{q}},$$
		where $q=p/(p-1)$.
		This shows that $E^{0}_{s,t}$ is continuous. The proof is thus completed. 
	\end{enumerate}
\end{proof}
An immediate consequence of the above is the following:

\begin{theorem} \label{p-action minimiser-theorem}
	For any $x,y\in \mathbb{R}^d$ there exists a unique $\alpha^*_p(\cdot;x,y)\in L^p\pa{0,T;\R^n}$ such that
	\begin{equation}\label{eq:c_p_and_alpha_ast}
		c_p(x,y)=\int_0^T \abs{\alpha_p^\ast(t;x,y)}^p \dd t
	\end{equation}
\end{theorem}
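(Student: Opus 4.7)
The plan is to recast the minimisation as a constrained optimisation in $L^p(0,T;\mathbb{R}^n)$ and to exploit the linear (affine) structure of the end-point mapping together with the strict convexity of the $L^p$-norm for $p>1$. Writing $A(x,y):=\{\alpha\in L^p(0,T;\mathbb{R}^n):E^x_{0,T}(\alpha)=y\}$, Remark \ref{rem:gamma_s_x_alpha_is_a_solution} shows that $\pcadm(x,y)$ is in bijection with $A(x,y)$, so it suffices to prove that the functional $J(\alpha):=\int_0^T|\alpha(t)|^p\,\dd t$ attains its infimum on $A(x,y)$ at a unique point.

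For existence I would argue as follows. First, Corollary \ref{cor:pcadm_is_not_empty} guarantees that $A(x,y)\neq\emptyset$, so the infimum is finite. Pick a minimising sequence $(\alpha_m)_{m\in\N}\subset A(x,y)$; since $J(\alpha_m)$ is bounded, $(\alpha_m)$ is bounded in $L^p$. Reflexivity of $L^p(0,T;\mathbb{R}^n)$ for $p>1$ yields a subsequence (not relabelled) and some $\alpha^*\in L^p(0,T;\mathbb{R}^n)$ with $\alpha_m\rightharpoonup\alpha^*$. By the weak continuity of $E^x_{0,T}$ established in Lemma \ref{properties of endpoint mapping}\eqref{item:end_point_is_weakly_cont}, we have $y=E^x_{0,T}(\alpha_m)\to E^x_{0,T}(\alpha^*)$, so $\alpha^*\in A(x,y)$. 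The weak lower semicontinuity of the $L^p$-norm then gives
\[
J(\alpha^*)=\|\alpha^*\|_{L^p}^p\leq\liminf_{m\to\infty}\|\alpha_m\|_{L^p}^p=c_p(x,y),
\]
so $\alpha^*$ achieves the infimum.

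For uniqueness I would exploit that $A(x,y)$ is convex (since $E^x_{0,T}$ is affine) and that $J$ is strictly convex on $L^p$ for $p>1$. If $\alpha_1,\alpha_2\in A(x,y)$ both realise $c_p(x,y)$ with $\alpha_1\neq\alpha_2$, then $\tfrac{1}{2}(\alpha_1+\alpha_2)\in A(x,y)$ and strict convexity of $t\mapsto|t|^p$ combined with the strict convexity of $\|\cdot\|_{L^p}^p$ (or a direct application of Clarkson's inequality) gives
\[
J\!\left(\tfrac{\alpha_1+\alpha_2}{2}\right)<\tfrac{1}{2}J(\alpha_1)+\tfrac{1}{2}J(\alpha_2)=c_p(x,y),
\]
contradicting the definition of $c_p(x,y)$. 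Hence $\alpha^*_p(\cdot;x,y):=\alpha^*$ is the unique minimiser, and \eqref{eq:c_p_and_alpha_ast} holds.

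The main (minor) subtlety is ensuring that the weak limit indeed lies in the constraint set; this is precisely what Lemma \ref{properties of endpoint mapping}\eqref{item:end_point_is_weakly_cont} provides, so no further work is needed. The rest is a standard direct-method-plus-strict-convexity argument.
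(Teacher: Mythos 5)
Your proposal is correct and follows essentially the same route as the paper: existence via a minimising sequence, boundedness and weak compactness in the reflexive space $L^p(0,T;\R^n)$, weak continuity of $E^x_{0,T}$ to keep the limit in the constraint set, and weak lower semicontinuity of the norm. The only cosmetic difference is in the uniqueness step, where the paper deduces proportionality from equality in the triangle (Minkowski) inequality while you invoke strict convexity of $\alpha\mapsto\int_0^T|\alpha(t)|^p\,\dd t$ directly; both arguments rest on the affinity of the end-point map and strict convexity in $L^p$ for $p>1$, so the proofs are essentially the same.
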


\begin{proof}
	We start by mentioning that as $\pcadm\pa{x,y}$ is not empty and any $L^p$ norm is bounded from below by $0$, the function $(x,y)\mapsto c_p(x,y)$ is well defined and is finite. 
	
	By definition, we can find a minimising sequence $\pa{\gamma_m,\alpha_m}_{m\in\N}$ in $\pcadm\pa{x,y}$ such that 
	\begin{equation}\nonumber
			\norm{\alpha_m}^p_{L^p\pa{0,T;\R^n}} \underset{m\to\infty}{\longrightarrow} c_p(x,y)<\infty.
	\end{equation}
	As this implies the boundedness of $(\alpha_m)_{m\in\N}$ in $L^p\pa{0,T;\R^n}$, we conclude that there exists a subsequence of $(\alpha_m)_{m\in\N}$, $(\alpha_{m_k})_{k\in\N}$, that converges to some $\alpha^\ast\in L^p\pa{0,T;\R^n}$ weakly, as $k\to\infty$. Consequently,
	\begin{equation}\nonumber
	\norm{\alpha^*}^p_{L^p\pa{0,T;\R^n}}\leq \liminf_{k\to \infty}\norm{\alpha_{m_k}}^p_{L^p\pa{0,T;\R^n}}=c_p(x,y)
	\end{equation}
	as the norm is lower-semi continuous with respect to weak convergence.
	
	As $E_{s,t}^x$ is weakly continuous we conclude that 
	$$E_{0,T}^x\pa{\alpha^\ast} = \lim_{k\to\infty}E_{0,T}^x\pa{\alpha_{m,k}}=y,$$
	showing that $\pa{\gamma_{\alpha^\ast}^{0,x},\alpha^\ast}\in \pcadm\pa{x,y}$ where $\gamma_{\alpha^\ast}^{0,x}$ is defined via \eqref{end-point mapping definition}. Consequently
	$$c_p(x,y) \leq \norm{\alpha^*}^p_{L^p\pa{0,T;\R^n}},$$
	which shows the existence of a minimiser to the definition of $c_p(x,y)$.
	
	To show that $\alpha^\ast$ is unique we assume that there exists $\pa{\gamma_{\beta^\ast},\beta^\ast} \in \pcadm\pa{x,y}$ such that $c_p(x,y) = \norm{\alpha^*}^p_{L^p\pa{0,T;\R^n}}=\norm{\beta^*}^p_{L^p\pa{0,T;\R^n}}$. As $E_{0,T}^x$ is an affine operator we see that for any $\lambda \in [0,1]$ we have that 
	$$\pa{\lambda \gamma_{\alpha^\ast}^{0,x} + \pa{1-\lambda}\gamma_{\beta^\ast}, \lambda \alpha^\ast+\pa{1-\lambda}\beta^\ast}\in \pcadm\pa{x,y}$$
	and as such
	$$c_p\pa{x,y} \leq \norm{\lambda \alpha^\ast+\pa{1-\lambda}\beta^\ast}^p_{L^p\pa{0,T;\R^n}} \leq \pa{\lambda\norm{\alpha^*}_{L^p\pa{0,T;\R^n}} +\pa{1-\lambda}\norm{\beta^*}_{L^p\pa{0,T;\R^n}}}^p=c_p\pa{x,y},$$
	The above implies that there is equality in each inequality and in particular we have equality in our triangle inequality for any $\lambda\in [0,1]$. Choosing $\lambda=\frac{1}{2}$ we conclude that there exists $a\geq 0$ such that $\alpha^\ast=a \beta^\ast$ and since $\norm{\alpha^*}_{L^p\pa{0,T;\R^n}}=\norm{\beta^*}_{L^p\pa{0,T;\R^n}}$ we must have that $\beta^\ast=\alpha^\ast$. The proof is thus complete.
\end{proof}

Surprisingly, the information we gathered so far is enough to study the continuity of $(x,y)\mapsto c_p(x,y)$ without knowing further regularity on the unique minimise $(x,y)\mapsto\alpha_p^\ast\pa{\cdot ;x,y}$.

As a starting point we notice that $c_p\pa{x,y}^{\frac{1}{p}}$ does not represent a distance between $x$ and $y$. Indeed, we see that $c_p\pa{x,y}=0$ implies that we can find $\pa{\gamma_\ast,\alpha_p^\ast}\in \pcadm\pa{x,y}$ such that
$$0=c_p\pa{x,y} = \norm{\alpha_p^\ast}^p_{L^p\pa{0,T;\R^n}}$$ 
which implies that $\alpha_p^\ast=0$. However, that means that 
$$y=\gamma_\ast(T) = \Phi\pa{0,T}x + \int_{0}^T\Phi(\tau,T)N(\tau)\alpha_p^\ast(\tau)\dd\tau = \Phi(0,T)x.$$
Since $\Phi(0,T)x\not=x$ in general (unless $\Phi(0,T)$ is the identity matrix) 
we see that (as mentioned in Remark \ref{rem:left_hand_side_not_a_distance}) $c_p(x,x)\not=0$ -- i.e. we need to spend energy to move from a point to itself. 

This simple observation leads us to consider a modification of $c_p$ which \textit{will} act as a distance between points in $\R^d$.

\begin{lemma}\label{lem:modification_of_c_p}
	For any $x,y\in\R^d$ define
	$$d_p(x,y) := c_p\pa{x,\Phi(0,T)y}^{\frac{1}{p}},$$
	where $\Phi(\cdot,\cdot)$ is the state transition map associated to \eqref{homogeneous particle system}. Then $d_p:\R^d\times \R^d\to[0,+\infty)$ is a metric. Moreover, $d_p$ is induced by a norm which we will denote by $\norm{\cdot}_{d_p}$.
\end{lemma}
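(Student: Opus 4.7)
\medskip

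\textbf{Proof plan.} The whole strategy is to show that $c_p(x,\Phi(0,T)y)$ depends on the pair $(x,y)$ only through the difference $y-x$, and that the resulting function of $w := y-x$ is the $p$-th power of a norm on $\R^d$.

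First I would reduce the cost to a function of one vector. Using the representation $E^x_{0,T}(\alpha) = \Phi(0,T)x + E^0_{0,T}(\alpha)$ from \eqref{eq:def_of_end_point}, the constraint $E^x_{0,T}(\alpha) = \Phi(0,T)y$ is equivalent to $E^0_{0,T}(\alpha) = \Phi(0,T)(y-x)$. Hence
\begin{equation*}
  c_p\bigl(x,\Phi(0,T)y\bigr) \;=\; c_p\bigl(0,\Phi(0,T)(y-x)\bigr) \;=:\; \varphi(y-x),
\end{equation*}
so $d_p(x,y) = \varphi(y-x)^{1/p}$. Defining $\|w\|_{d_p} := \varphi(w)^{1/p}$ for $w\in\R^d$, it will suffice to prove that $\|\cdot\|_{d_p}$ is a norm, since then $d_p$ is automatically the induced metric.

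Next I would check each norm axiom using the linearity of $E^0_{0,T}$ and the uniqueness statement from Theorem \ref{p-action minimiser-theorem}. Homogeneity is immediate: for $\lambda\in\R$, a control $\alpha$ satisfies $E^0_{0,T}(\alpha)=\Phi(0,T)w$ iff $\lambda\alpha$ satisfies $E^0_{0,T}(\lambda\alpha)=\Phi(0,T)(\lambda w)$, so taking the infimum of $\|\cdot\|_{L^p}^p$ gives $\varphi(\lambda w)=|\lambda|^p\varphi(w)$, hence $\|\lambda w\|_{d_p}=|\lambda|\|w\|_{d_p}$. For the triangle inequality, let $\alpha_1^*,\alpha_2^*$ be the optimal controls (from Theorem \ref{p-action minimiser-theorem}) associated to $\varphi(w_1)$ and $\varphi(w_2)$. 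Linearity of $E^0_{0,T}$ gives $E^0_{0,T}(\alpha_1^*+\alpha_2^*) = \Phi(0,T)(w_1+w_2)$, so
\begin{equation*}
  \|w_1+w_2\|_{d_p} \;\le\; \bigl\|\alpha_1^*+\alpha_2^*\bigr\|_{L^p(0,T;\R^n)} \;\le\; \|\alpha_1^*\|_{L^p} + \|\alpha_2^*\|_{L^p} \;=\; \|w_1\|_{d_p}+\|w_2\|_{d_p}
\end{equation*}
by Minkowski's inequality in $L^p$. Non-negativity is obvious.

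For definiteness I would argue as follows: if $\|w\|_{d_p}=0$ then the unique minimiser $\alpha^*_p(\cdot;0,\Phi(0,T)w)$ furnished by Theorem \ref{p-action minimiser-theorem} must vanish identically, so the constraint $E^0_{0,T}(\alpha^*)=\Phi(0,T)w$ reduces to $\Phi(0,T)w = 0$. The state transition map $\Phi(0,T)$ is invertible — indeed, Lemma \ref{lem:2_parameter_semigroup}\eqref{item:semigroup_property}--\eqref{item:P(t,t)} give $\Phi(T,0)\circ\Phi(0,T) = \Phi(0,0)=\mathbf{id}$, so that $\Phi(0,T)$ has inverse $\Phi(T,0)$ — and we conclude $w=0$.

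I do not anticipate a genuine obstacle here; the only subtle point is definiteness, which relies on the uniqueness of the optimal control (Theorem \ref{p-action minimiser-theorem}) together with the invertibility of $\Phi(0,T)$. Once the four norm axioms are verified, $d_p(x,y)=\|y-x\|_{d_p}$ is immediately a metric on $\R^d$ induced by the norm $\|\cdot\|_{d_p}$, completing the proof.
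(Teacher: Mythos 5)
Your proof is correct, and it is organised differently from the paper's. You first observe that, because $E^x_{0,T}(\alpha)=\Phi(0,T)x+E^0_{0,T}(\alpha)$ and $\Phi(0,T)$ is linear, the admissible control set for $(x,\Phi(0,T)y)$ coincides with that for $(0,\Phi(0,T)(y-x))$, so $c_p(x,\Phi(0,T)y)$ depends only on $w=y-x$; you then define $\norm{w}_{d_p}:=\varphi(w)^{1/p}$ and verify the norm axioms (homogeneity by rescaling controls, triangle inequality by adding the two optimal controls via Minkowski, definiteness from the attained minimiser being zero together with the invertibility $\Phi(T,0)\circ\Phi(0,T)=\mathbf{id}$), after which the metric property, symmetry and translation invariance of $d_p$ are automatic. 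The paper instead verifies the metric axioms directly (symmetry by negating controls, triangle inequality by differencing two controls through a third point $z$) and then separately establishes scaling homogeneity and translation invariance to conclude $d_p$ is norm-induced. The underlying ingredients are identical — the affine structure of the end-point map, linearity and invertibility of $\Phi(0,T)$, and existence of the optimal control from Theorem \ref{p-action minimiser-theorem} — but your reduction to a function of the single variable $y-x$ shortens the argument by making symmetry and translation invariance come for free, at the cost of one extra observation at the start; the paper's route is more pedestrian but checks each property explicitly. Two cosmetic remarks: your ``iff'' in the homogeneity step only holds as stated for $\lambda\neq 0$ (the case $\lambda=0$ is trivial anyway since the zero control is admissible for $w=0$), and definiteness needs only the existence, not the uniqueness, of the minimiser.
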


\begin{proof}
	The non-negativity of $d_p$ is clear from its definition. We have also seen that $d_p\pa{x,y}=0$ implies that 
	$$\Phi(0,T)y = \Phi(0,T)x$$
	from which we find that $y=x$. Conversely we know that \amit{the curve $\gamma(t)=\Phi(0,t)x$} satisfies $\pa{\gamma,0}\in\pcadm\pa{x,\Phi(0,T)x}$ and as such
	$$0 \leq d_p\pa{x,x} = c_p\pa{x,\Phi(0,T)x}^{\frac{1}{p}} \leq \norm{0}_{L^p\pa{0,T;\R^n}}=0$$
	showing that $d_p(x,x)=0$.\\
	Next we show the symmetry of $d_p$. We start by noticing that if $\pa{\gamma,\alpha}\in \pcadm\pa{x,\Phi(0,T)y}$ then
	$$\Phi(0,T)y=\Phi(0,T)x + \int_{0}^T\Phi(\tau,T)N\pa{\tau}\alpha\pa{\tau}\dd\tau$$
	which implies that 
	$$\Phi(0,T)x=\Phi(0,T)y + \int_{0}^T\Phi(\tau,T)N\pa{\tau}\pa{-\alpha\pa{\tau}}\dd\tau,$$
	i.e. $\pa{\gamma_{-\alpha}^{0,y},-\alpha}\in \pcadm\pa{y,\Phi(0,T)x}$, where $\gamma_{\alpha}^{0,x}$ is defined via \eqref{end-point mapping definition}. Consequently
	$$c_p\pa{x,\Phi(0,T)y} = \norm{\alpha_p^\ast\pa{\cdot;x,\Phi(0,T)y}}^p_{L^p\pa{0,T;\R^n}}=\norm{-\alpha_p^\ast\pa{\cdot;x,\Phi(0,T)y}}^p_{L^p\pa{0,T;\R^n}} \geq c_p\pa{y,\Phi(0,T)x}.$$
	As $x$ and $y$ were arbitrary we can interchange them to conclude that 
	$$d_p(x,y) = c_p\pa{x,\Phi(0,T)y}^{\frac{1}{p}}=c_p\pa{y,\Phi(0,T)x}^{\frac{1}{p}}=d_p\pa{y,x}.$$
	To conclude the fact that $d_p$ is a metric, we will now show that it  satisfies the triangle inequality. Let $x,y,z\in\R^{d}$ be given. We have that for any $\pa{\gamma_1,\alpha_1}\in\pcadm\pa{x,\Phi(0,T)z}$ and $\pa{\gamma_1,\alpha_2}\in \pcadm\pa{y,\Phi(0,T)z}$
	$$\Phi(0,T)z =\gamma_1(T)= \Phi(0,T)x + \int_{0}^T \Phi(\tau,T)N(\tau)\alpha_1(\tau)\dd\tau,$$
	and
	$$\Phi(0,T)z =\gamma_2(T) =\Phi(0,T)y + \int_{0}^T \Phi(\tau,T)N(\tau)\alpha_2(\tau)\dd\tau,$$
	which implies that 
	$$\Phi(0,T)y = \Phi(0,T)x + \int_{0}^T \Phi(\tau,T)N(\tau)\pa{\alpha_1(\tau)-\alpha_2(\tau)}\dd\tau.$$
	In other words, $\pa{\gamma_{\alpha_1-\alpha_2}^{0,x},\alpha_1-\alpha_2}\in \pcadm\pa{x,\Phi(0,T)y}$. We conclude that for any such $\pa{\gamma_1,\alpha_1}$ and $\pa{\gamma_2,\alpha_2}$
	$$d_p\pa{x,y} = c_p\pa{x,\Phi(0,T)y}^{\frac{1}{p}} \leq \norm{\alpha_1-\alpha_2}_{L^p\pa{0,T;\R^n}} \leq \norm{\alpha_1}_{L^p\pa{0,T;\R^n}}+\norm{\alpha_2}_{L^p\pa{0,T;\R^n}}.$$
	Taking the infimum over the right hand side gives us
	$$d_p\pa{x,y} \leq c_p\pa{x,\Phi(0,T)z}^{\frac{1}{p}}+c_p\pa{y,\Phi(0,T)z}^{\frac{1}{p}}=d_p\pa{x,z}+d_p\pa{y,z},$$
	which is the desired inequality.
	
	To show the second part of the lemma, i.e. that $d_p$ is induced by a norm, we only need to show that 
	$$d_p\pa{a x,ay} = \abs{a}d_p\pa{x,y}$$
	for any scalar $a\not=0$ and any $x,y\in\R^d$, and that 
	$$d_p\pa{x+z,y+z} = d_p\pa{x,y}$$
	for any $x,y,z\in\R^d$.	
	
	We notice that it is enough to show that for any scalar $a\not=0$ and any $x,y\in\R^d$ 
	\begin{equation}\label{eq:scaling_homo}
		d_p\pa{ax,ay}^p \leq \abs{a}^p d_p\pa{x,y}^p,
	\end{equation}
	to show the scaling homogeneity.
	Indeed, if \eqref{eq:scaling_homo} holds then since  $z=\frac{1}{a}\pa{az}$ for every $a\not=0$ and $z\in \R^d$ we find that 
	$$d\pa{x,y}^p \leq \frac{1}{\abs{a}^p}d_p\pa{ax,ay}^p,$$
	which together with \eqref{eq:scaling_homo} gives us the desired identity. To show \eqref{eq:scaling_homo} we notice that if $\pa{\gamma,\alpha}\in \pcadm\pa{x,\Phi(0,T)y}$ then $\pa{a\gamma,a\alpha}\in \pcadm\pa{ax,\Phi(0,T)\pa{ay}}$, where we have used the fact that $\Phi$ is a linear map. Consequently
	$$d_{p}\pa{ax,ay}^p =c_p\pa{ax,\Phi(0,T)\pa{ay}}\leq \inf_{\pa{\gamma,\alpha}\in \pcadm\pa{x,\Phi(0,T)y}}\norm{a\alpha}^p_{L^p\pa{0,T;\R^n}}=\abs{a}^pc_p\pa{x,\Phi(0,T)y} =\abs{a}^pd_p\pa{x,y}^p. $$
	
	We are only left with showing the invariance of $d_p$ under transition. Much like with the scaling homogeneity, it would be enough to show that for any $x,y,z\in\R^d$
	\begin{equation}\label{eq:transition_invariance}
		d_p\pa{x+z,y+z} \leq d_p\pa{x,y}.
	\end{equation}
	Given $x,y\in\R^d$ we consider the pair $\pa{\gamma^\ast,\alpha^\ast}\in \pcadm\pa{x,\Phi(0,T)y}$ with
	$$c_p\pa{x,\Phi(0,T)y} = \norm{\alpha^\ast}_{L^p\pa{0,R;\R^n}}.$$
	We see that for any $z\in \R^d$ the curve 
	$$\delta(t):=\gamma^\ast(t)+\Phi(0,t)z = \Phi(0,t)z + \Phi(0,t)x+\int_{0}^t \Phi(\tau,T)N(\tau)\alpha^{*}(\tau)\dd\tau $$
	satisfies
	$$\delta(0) = x+z,\quad \delta(T)=\Phi(0,T)z + \Phi(0,T)y=\Phi(0,T)\pa{z+y},$$
	which implies that $\pa{\delta,\alpha^{*}}\in \pcadm\pa{x+z,\Phi(0,T)\pa{y+z}}$. Consequently,
	$$d_p\pa{x+z,y+z} \leq \norm{\alpha^\ast}_{L^p\pa{0,T;\R^n}} =c_p\pa{x,\Phi(0,T)y}=d_p\pa{x,y},$$
	which concludes our proof.
\end{proof}

An immediate corollary of Lemma \ref{lem:modification_of_c_p} is the following:

\begin{corollary}\label{cor:upper_and_lower_bounds_on_c_p}
	\leavevmode
	\begin{enumerate}[(i)]
		\item\label{item:upper_bound_on_c_p} There exists $K_1,K_2>0$ such that for any $x,y\in\R^d$
		\begin{equation}\label{eq:upper_and_lower_bounds_on_c_p}
			K_1 \abs{y-\Phi(0,T)x}^p \leq c_p\pa{x,y} \leq K_2\abs{y-\Phi(0,T)x}^p.
		\end{equation}
		\item\label{item:c_p_1_over_p_is_lip} $c_p^{\frac{1}{p}}$ is Lipschitz continuous on $\R^d\times\R^d$ and consequently $c_p$ is continuous.
	\end{enumerate}
\end{corollary}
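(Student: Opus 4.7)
\textbf{Proof plan for Corollary \ref{cor:upper_and_lower_bounds_on_c_p}.}

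The plan is to exploit Lemma \ref{lem:modification_of_c_p}, which asserts that $d_p$ is a metric on $\R^d$ induced by a norm $\|\cdot\|_{d_p}$. Since $\Phi(0,T):\R^d \to \R^d$ is the state transition associated to \eqref{homogeneous particle system} and thus invertible (with $\Phi(0,T)^{-1} = \Phi(T,0)$ by Lemma \ref{lem:2_parameter_semigroup}\eqref{item:semigroup_property}-\eqref{item:P(t,t)}), we can rewrite the cost function as
$$
c_p(x,y) = c_p\bigl(x,\Phi(0,T)\Phi(0,T)^{-1}y\bigr) = d_p\bigl(x,\Phi(0,T)^{-1}y\bigr)^p = \bigl\|x - \Phi(0,T)^{-1}y\bigr\|_{d_p}^p.
$$
This identity is the key bridge between $c_p$ and a genuine norm on $\R^d$.

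For part \eqref{item:upper_bound_on_c_p}, I would invoke the equivalence of norms on the finite dimensional space $\R^d$: there exist $c_1,c_2>0$ such that $c_1|z|\le \|z\|_{d_p}\le c_2|z|$ for every $z\in\R^d$. Setting $z:=x-\Phi(0,T)^{-1}y = \Phi(0,T)^{-1}\bigl(\Phi(0,T)x - y\bigr)$, the boundedness of $\Phi(0,T)^{\pm 1}$ (whose operator norms I denote $\Lambda_\pm$) yields
$$
\Lambda_+^{-1}\bigl|\Phi(0,T)x - y\bigr| \le |z| \le \Lambda_-\bigl|\Phi(0,T)x - y\bigr|.
$$
Plugging these into the identity above gives \eqref{eq:upper_and_lower_bounds_on_c_p} with $K_1 := (c_1/\Lambda_+)^p$ and $K_2 := (c_2\Lambda_-)^p$.

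For part \eqref{item:c_p_1_over_p_is_lip}, I would rely on the fact that $d_p$ is a metric, so in particular the reverse triangle inequality applies: for $(x_1,y_1),(x_2,y_2)\in\R^d\times\R^d$,
$$
\bigl| c_p(x_1,y_1)^{1/p} - c_p(x_2,y_2)^{1/p}\bigr| = \bigl|d_p(x_1,\Phi(0,T)^{-1}y_1) - d_p(x_2,\Phi(0,T)^{-1}y_2)\bigr| \le d_p(x_1,x_2) + d_p\bigl(\Phi(0,T)^{-1}y_1,\Phi(0,T)^{-1}y_2\bigr).
$$
Using again $d_p(a,b) = \|a-b\|_{d_p}\le c_2|a-b|$ together with the bound $|\Phi(0,T)^{-1}(y_1-y_2)|\le \Lambda_+|y_1-y_2|$, the right-hand side is controlled by $c_2\bigl(|x_1-x_2| + \Lambda_+|y_1-y_2|\bigr)$, establishing the global Lipschitz continuity of $c_p^{1/p}$ on $\R^d\times\R^d$. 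Continuity of $c_p$ then follows at once, since $c_p = (c_p^{1/p})^p$ is a continuous function of a continuous function.

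The main conceptual step is the rewriting $c_p(x,y)=\|x-\Phi(0,T)^{-1}y\|_{d_p}^p$, which transfers the problem to the elementary equivalence of norms on a finite dimensional space; after this, both assertions reduce to routine estimates and no further fine information on the optimal control $\alpha^\ast_p$ is needed. The only mild subtlety is ensuring that $\Phi(0,T)$ is invertible, but this is immediate from the semigroup structure established in Lemma \ref{lem:2_parameter_semigroup}.
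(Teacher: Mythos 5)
Your proposal is correct and follows essentially the same route as the paper: both rewrite $c_p(x,y)^{1/p}=\norm{x-\Phi(T,0)y}_{d_p}$ via Lemma \ref{lem:modification_of_c_p}, use equivalence of norms on $\R^d$ together with the boundedness and invertibility of $\Phi(0,T)$ for the two-sided bound, and use the (reverse) triangle inequality for the norm-induced metric to get the Lipschitz estimate. The only blemish is notational: in the Lipschitz step the bound $\abs{\Phi(0,T)^{-1}(y_1-y_2)}$ should carry $\Lambda_-$ rather than $\Lambda_+$ under your conventions, which only changes the constant and not the argument.
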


\begin{proof}
	As all norms in finite dimension are equivalent, we can find $M_1,M_2>0$ such that for all $x,y\in\R^{d}$
	$$M_1\abs{x-y} \leq \norm{x-y}_{d_p}\leq M_2\abs{x-y}.$$
	Since $\Phi(0,T)\Phi(T,0)=\textbf{id}$ we have that 
	$$c_p\pa{x,y}^{\frac{1}{p}} = d_p\pa{x,\Phi(T,0)y} = \norm{x-\Phi(T,0)y}_{d_p}$$
	from which we get
	$$M_1^p \abs{x-\Phi(T,0)y}^p \leq c_p\pa{x,y} \leq M_2^p\abs{x-\Phi(T,0)y}^p.$$
	As
	$$\abs{x-\Phi(T,0)y} = \abs{\Phi(T,0)\pa{y-\Phi(0,T)x}}$$
	and as $\norm{x}_{\Phi(T,0)} := \abs{\Phi(T,0)x}$ is a norm due to the linearity and invertibility of $\Phi(T,0)$, we can find $N_1,N_2>0$ such that for all $x\in\R^{d}$
	$$N_1 \abs{x} \leq \norm{x}_{\Phi(T,0)}\leq N_2\abs{x}$$
	and conclude that 
		$$\pa{M_1N_1}^p \abs{y-\Phi(0,T)x}^p \leq c_p\pa{x,y} \leq \pa{M_2N_2}^p\abs{y-\Phi(0,T)x}^p,$$
	showing \eqref{eq:upper_and_lower_bounds_on_c_p}.	
	
	The fact that $c_p^{\frac{1}{p}}$ is Lipschitz continuous follows almost immediately from the fact that $d_p$ is a metric induced by a norm. Indeed, as was noted,
\begin{align*}	
	\abs{c_p(x,y)^{\frac{1}{p}}- c_p(z,w)^{\frac{1}{p}}} &= \abs{\norm{x-\Phi(T,0)y}_{d_p}-\norm{z-\Phi(T,0)w}_{d_p}}\\
	&\leq \norm{\pa{x-z}-\Phi(T,0)\pa{y-w}}_{d_p} \leq M_2\abs{\pa{x-z}+\Phi(T,0)\pa{y-w}}\\
	&\leq M_2\pa{1+\norm{\Phi(T,0)}}\pa{\abs{x-z}+\abs{y-w}}\leq \sqrt{2}M_2\pa{1+\norm{\Phi(T,0)}}\abs{\pa{x,y}-\pa{z,w}},
\end{align*}
	where $\norm{\Phi(T,0)}$ is the operator norm of $\Phi(T,0)$. 
	
	Lastly, the continuity of $c_p$ follows from the Lipschitz property of $c_p^{\frac{1}{p}}$, the non-negativity of $c_p$, and the fact that
	$$c_p(x,y) = \pa{c_p\pa{x,y}^{\frac{1}{p}}}^p.$$
	The proof is thus complete. 
 \end{proof}

\begin{remark}
	One can take a different approach to the study of the cost function $c_p$. For any $\pa{\gamma,\alpha}\in \pcadm\pa{x,y}$ one can consider the curve
	$$\widetilde{\gamma}(t) = \Phi\pa{t,0}\gamma(t),$$
	and using part \eqref{item:differentiability_of_semigroup} of Lemma \ref{lem:2_parameter_semigroup} we find that
	$$\widetilde{\gamma}'(t) = -\Phi(t,0)M(t)\gamma(t) + \Phi\pa{t,0}\pa{M(t)\gamma(t) +N(t)\alpha(t)}=\Phi\pa{t,0}N(t)\alpha(t):=\widetilde{N}(t)\alpha(t).$$
	and that $\widetilde{\gamma}(0)=x$ and $\widetilde{\gamma}(T)=\Phi\pa{T,0}y$. Defining the function 
	$$\widetilde{c}_p(x,y):=\min\{\norm{\alpha}_p^p\;:\;\widetilde{\gamma}'(t)=\widetilde{N}(t)\alpha(t),\; \widetilde{\gamma}(0)=x,\;\widetilde{\gamma}(T)=y\}$$
	one would find it simpler to show that $\widetilde{c}^{\frac{1}{p}}_p\pa{x,y}$ gives a metric. The fact that
	$$\widetilde{c}_p\pa{x,y} = c_p\pa{x,\Phi(0,T)y}$$
	will reproduce Lemma \ref{lem:modification_of_c_p} and Corollary \ref{cor:upper_and_lower_bounds_on_c_p} immediately. 
\end{remark}
The last ingredient in the proof of Theorem \ref{thm:optimal_control_cost}, and a study that will serve us well later, is the study of the regularity of the optimal control associated to $c_p(x,y)$, $\alpha_p^\ast\pa{\cdot;x,y}$. 

\subsection{The optimal control in the definition of $c_{p}(x,y)$} The study of the optimal control $(x,y)\mapsto\alpha_p^\ast\pa{\cdot;x,y}$ is much more nuanced and delicate than its cost function counterpart. As $\alpha_p^\ast\pa{\cdot;x,y}$ is attained as a minimum of a real valued functional over a certain set, which can be represented as a level set of a continuously differentiable function, an appropriate way to approach the study of $\alpha_p^\ast\pa{\cdot;x,y}$ is via Lagrange multiplers. We will use the following theorem, whose proof can be found in Appendix B of \cite{Rifford14}:
\begin{theorem}
	\label{lagrange multiplier theorem rifford}
	Let $(X,\norm{\cdot}_X)$ be a normed vector space, $U$ be an open subset of $X$, and $E:U\to \mathbb{R}^d$ and $J:U\to \mathbb{R}$ be two continuously differentiable mappings on $U$. Assume in addition that there exists some $u^*\in U$ that satisfies $J(u^*)\leq J(u)$ for all $u\in U$ such that $E(u^*)=E(u)$. 
	Then, there exist $\lambda\in \mathbb{R}$ and $p\in \mathbb{R}^d$ with $(\lambda,p)\neq 0$ such that $p^\top D_{u^*}E(v)=\lambda D_{u^*}J(v)$ for any $v\in U$.
\end{theorem}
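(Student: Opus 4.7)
The plan is to split the argument into two cases according to whether the Fr\'echet derivative $D_{u^*}E:X\to \R^d$ is surjective as a linear map.

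\textbf{Case 1: $D_{u^*}E$ is not surjective.} In this situation the image of $D_{u^*}E$ is a proper linear subspace of $\R^d$, so I select any nonzero vector $p\in \R^d$ orthogonal to this image. Then $p^\top D_{u^*}E(v)=0$ for every $v\in X$, and choosing $\lambda=0$ yields a nontrivial pair $(\lambda,p)$ for which the claimed identity holds trivially.

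\textbf{Case 2: $D_{u^*}E$ is surjective.} The key step here is to show that $D_{u^*}J$ vanishes on $\ker(D_{u^*}E)$. Fix $v\in \ker(D_{u^*}E)$. I pick $w_1,\dots,w_d\in X$ such that $\{D_{u^*}E(w_i)\}_{i=1}^d$ is a basis of $\R^d$, and define the auxiliary map $F:(-\delta,\delta)\times \R^d\to \R^d$ by $F(t,s):=E\bigl(u^*+tv+\sum_{i=1}^d s_i w_i\bigr)-E(u^*)$. By construction $\partial_s F(0,0)$ is invertible, so the classical (finite-dimensional) implicit function theorem yields a $C^1$ function $s=s(t)$ with $s(0)=0$ satisfying $F(t,s(t))\equiv 0$; differentiating this identity at $t=0$ together with $D_{u^*}E(v)=0$ forces $s'(0)=0$, so the curve $\gamma(t):=u^*+tv+\sum_i s_i(t)w_i$ satisfies $\gamma(0)=u^*$, $\gamma'(0)=v$, and $E(\gamma(t))=E(u^*)$. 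The minimality hypothesis then gives $J(\gamma(t))\geq J(u^*)$ for $|t|$ small, and the same construction run with $-v$ in place of $v$ produces the reverse inequality; differentiating both at $t=0$ yields $D_{u^*}J(v)=0$. Consequently $\ker(D_{u^*}E)\subset \ker(D_{u^*}J)$, so a quotient argument (the surjection $D_{u^*}E$ induces an isomorphism $X/\ker(D_{u^*}E)\cong \R^d$, and $D_{u^*}J$ descends to a linear functional on this quotient) provides $p\in \R^d$ with $D_{u^*}J(v)=p^\top D_{u^*}E(v)$ for every $v\in X$; one takes $\lambda=1$.

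The main technical subtlety is the use of the implicit function theorem when $X$ is only a normed vector space. The strategy above sidesteps this by restricting to the finite-dimensional slice $u^*+tv+\mathrm{span}(w_1,\dots,w_d)$, so only the classical Euclidean implicit function theorem is invoked, together with the continuity of $DE$ along that slice (which follows from $E\in C^1(U)$). This reduction is clean whenever $X$ is Banach, which covers all applications in this paper (where $X=L^p(0,T;\R^n)$). The remaining ingredients---selecting $p$ orthogonal to a proper subspace of $\R^d$ in Case 1, and the quotient linear algebra in Case 2---are routine.
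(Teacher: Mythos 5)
Your proof is correct. Note first that the paper does not prove this statement at all: it is quoted as a known result, with the proof deferred to Appendix~B of \cite{Rifford14}, so there is no internal argument to compare against. Your two-case argument is a complete and essentially standard proof of that cited result. Case 1 (non-surjective $D_{u^*}E$, take $\lambda=0$ and $p$ orthogonal to the image) is fine. In Case 2 the finite-dimensional-slice trick is exactly the right way to avoid an infinite-dimensional implicit function theorem: $F(t,s)=E\bigl(u^*+tv+\sum_i s_iw_i\bigr)-E(u^*)$ is $C^1$ near $(0,0)$ because the affine parametrisation is smooth into $X$ and $u\mapsto D_uE$ is continuous in operator norm, $\partial_sF(0,0)$ is invertible by the choice of the $w_i$, and the resulting curve $\gamma$ with $\gamma'(0)=v$, $E(\gamma(t))\equiv E(u^*)$ turns the constrained minimality into $\frac{\dd}{\dd t}J(\gamma(t))\vert_{t=0}=0$, i.e. $\ker D_{u^*}E\subset\ker D_{u^*}J$; the factorisation through $X/\ker D_{u^*}E\cong\R^d$ then gives $p$ with $\lambda=1$. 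Two small remarks. First, the detour through $-v$ is unnecessary: $s(t)$ is defined on a two-sided neighbourhood of $0$, so $t=0$ is an interior local minimum of $t\mapsto J(\gamma(t))$ and its derivative vanishes directly. Second, your hedge that the reduction is ``clean whenever $X$ is Banach'' undersells the argument: nothing in it uses completeness (the chain rule for Fr\'echet derivatives and the classical implicit function theorem in $\R^{1+d}$ are all that is invoked), so you have in fact proved the statement for a general normed space, exactly as it is formulated. The classical proof in \cite{Rifford14} argues instead with the augmented map $(E,J):U\to\R^{d+1}$, showing its differential at $u^*$ cannot be surjective and taking $(p,-\lambda)$ orthogonal to its image; your kernel-inclusion formulation is an equivalent, equally economical route.
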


To utilise this theorem we notice that in our setting $X=U=L^p\pa{0,T;\R^n}$, $E=E_{0,T}^x$ and $J = \norm{\cdot}_{L^p\pa{0,T;\R^n}}^p$. While we have shown the continuous differentiability of $E$ in Lemma \ref{properties of endpoint mapping}, we still need to investigate the differentiability of $J$. This is expressed in the next lemma.

\begin{lemma}
	\label{frechet differentiability of J lemma}
	Let $p>1$. Then the functional $J:L^p\pa{0,T;\R^n}\to \mathbb{R}$ defined by 
	\begin{equation}\label{eq:def_of_J}
		J(\alpha):=\int_0^T |\alpha(t)|^p\dd t
	\end{equation}
	 is continuously differentiable. Moreover, its Fr\'echet derivative at $\alpha\in L^p\pa{0,T;\R^n}$ acting on $u\in L^p\pa{0,T;\R^n}$ is given by 
	\begin{equation}
		\label{frechet derivative of J at alpha-equation}
		D_\alpha J(u)=p \int_0^T \mathfrak{j}_p(\alpha(t))^\top u(t)\dd t
	\end{equation}
	where $\mathfrak{j}_p:L^p\pa{0,T;\R^n}\to L^q\pa{0,T;\R^n}$ is defined to be
	\begin{equation}\label{eq:def_of_j_p_in_L_p}
		\mathfrak{j}_p(\alpha):=
		\begin{cases}
			\abs{\alpha}^{p-2}\alpha,& \alpha\neq 0,\\
			0,& \alpha=0.
		\end{cases}
	\end{equation}
\end{lemma}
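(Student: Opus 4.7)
The plan is to treat $J$ as a Nemytskii-type functional generated by the scalar map $F:\R^n\to\R$, $F(x):=\abs{x}^p$. For $p>1$, the map $F$ is $C^1$ with gradient $\nabla F(x) = p\,\mathfrak{j}_p(x)$, continuity at the origin being guaranteed by the elementary bound $\abs{\mathfrak{j}_p(x)}\leq \abs{x}^{p-1}\to 0$. The whole task is therefore to lift this pointwise regularity to Fr\'echet differentiability of $J$ on $L^p\pa{0,T;\R^n}$, which ultimately reduces to proving continuity of the superposition operator $\alpha \mapsto \mathfrak{j}_p(\alpha)$ from $L^p\pa{0,T;\R^n}$ into $L^q\pa{0,T;\R^n}$, with $q:=p/(p-1)$.

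First I would observe that $\mathfrak{j}_p$ is well defined as a map $L^p\to L^q$ because the pointwise identity $\abs{\mathfrak{j}_p(\alpha(t))}^q = \abs{\alpha(t)}^p$ yields $\norm{\mathfrak{j}_p(\alpha)}_{L^q(0,T;\R^n)}^q = \norm{\alpha}_{L^p(0,T;\R^n)}^p$. Continuity then follows from standard Carath\'eodory/Nemytskii theory: given $\alpha_k\to\alpha$ in $L^p$, extract an a.e. convergent subsequence, combine the continuity of $\mathfrak{j}_p$ on $\R^n$ with dominated convergence (majorant controlled by $(\abs{\alpha_k}+\abs{\alpha})^{p-1}$), and close the argument with the usual subsequence principle to obtain convergence of the full sequence.

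Next, for fixed $\alpha,u\in L^p\pa{0,T;\R^n}$, the fundamental theorem of calculus applied to $s\mapsto\abs{\alpha(t)+su(t)}^p$ produces the pointwise identity
\begin{equation*}
\abs{\alpha(t)+u(t)}^p - \abs{\alpha(t)}^p = p\int_0^1 \mathfrak{j}_p\bigl(\alpha(t)+su(t)\bigr)^\top u(t)\,\dd s.
\end{equation*}
Integrating this over $(0,T)$, subtracting the candidate differential from \eqref{frechet derivative of J at alpha-equation}, and applying Fubini together with H\"older's inequality yields
\begin{equation*}
\abs{J(\alpha+u) - J(\alpha) - p\int_0^T \mathfrak{j}_p(\alpha(t))^\top u(t)\,\dd t} \leq p\,\norm{u}_{L^p(0,T;\R^n)} \int_0^1 \norm{\mathfrak{j}_p(\alpha+su)-\mathfrak{j}_p(\alpha)}_{L^q(0,T;\R^n)}\dd s.
\end{equation*}
For each $s\in[0,1]$ the inner norm tends to $0$ as $\norm{u}_{L^p}\to 0$ by the previous step, and a uniform $L^q$-bound of the form $C\bigl(\norm{\alpha}_{L^p}^{p-1}+\norm{u}_{L^p}^{p-1}\bigr)$ (coming from the identity $\norm{\mathfrak{j}_p(\beta)}_{L^q} = \norm{\beta}_{L^p}^{p-1}$) allows an application of dominated convergence in $s$; the remainder is therefore $o\bigl(\norm{u}_{L^p(0,T;\R^n)}\bigr)$, which is precisely Fr\'echet differentiability with derivative given by \eqref{frechet derivative of J at alpha-equation}. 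Continuity of $\alpha\mapsto D_\alpha J$ as a map into $(L^p)^\ast$ is then immediate from the continuity of $\mathfrak{j}_p:L^p\to L^q$.

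The main obstacle is the subquadratic regime $1<p<2$, where $\mathfrak{j}_p$ fails to be locally Lipschitz at the origin, so one cannot control $\abs{\mathfrak{j}_p(x+h)-\mathfrak{j}_p(x)}$ by a constant multiple of $\abs{h}$. Bypassing this via $L^p$-to-$L^q$ continuity of the superposition operator---a genuinely nonlinear statement proved through pointwise continuity plus dominated convergence---is what makes the argument uniform across the whole range $p>1$.
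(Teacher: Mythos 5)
Your proposal is correct, but it reaches Fr\'echet differentiability by a genuinely different route than the paper. The paper controls the pointwise remainder $\tfrac1p\abs{y}^p-\tfrac1p\abs{x}^p-\jj_p(x)^\top(y-x)$ through the sharp two-sided estimates of Lemma \ref{technical estimates on |x|^p-|y|^p - lemma AGS} (borrowed from \cite[Lemma 10.2.1]{AGS08}), which forces a case split $1<p\le 2$ versus $p>2$ and an extra H\"older step in the superquadratic regime; you instead write the increment exactly via the fundamental theorem of calculus, $\abs{\alpha+u}^p-\abs{\alpha}^p=p\int_0^1\jj_p(\alpha+su)^\top u\,\dd s$ (legitimate since $x\mapsto\abs{x}^p$ is $C^1$ on $\R^n$ for $p>1$), and then Fubini plus H\"older reduce the $o(\norm{u}_{L^p})$ estimate to the $L^p\to L^q$ continuity of the superposition operator $\jj_p$, which you need anyway for continuity of the derivative. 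This buys a single argument uniform in $p>1$, with no case analysis and no need for the AGS inequalities, at the price of an extra (harmless) dominated-convergence argument in the auxiliary variable $s$, justified by the uniform bound $\norm{\jj_p(\alpha+su)}_{L^q}=\norm{\alpha+su}_{L^p}^{p-1}$. For the continuity of $\jj_p$ itself your argument coincides with the paper's: subsequence principle, pointwise a.e.\ convergence, and dominated convergence with a $k$-dependent majorant, which requires either the generalised dominated convergence theorem that the paper isolates as Lemma \ref{generalised dct lemma} or the standard extraction of an $L^p$-dominating function along the a.e.\ convergent subsequence; your appeal to ``standard Carath\'eodory/Nemytskii theory'' is exactly this device, so no gap, but if you write it out you should make that step explicit rather than cite plain dominated convergence.
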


This result would be well known for experts. For the sake of completion we have decided to include it in Appendix \ref{app:additional}.

\begin{remark}
	It worth to mention that in \cite[Subsection 8.3]{AGS08} the authors have shown that $\jj_p$ is in the subdifferential of the functional in question, while the above lemma has a slightly stronger conclusion, as here we characterise continuous Fr\'echet differentiability.
\end{remark}

With $E^{x}_{0,T}$ and $\norm{\cdot}^p_{L^p\pa{0,T;\R^n}}$ fully explored we are now ready to turn our attention to $\alpha^\ast\pa{\cdot;x,y}$ with the help of the Lagrange multiplier theorem, Theorem \ref{lagrange multiplier theorem rifford}:

\begin{theorem}
	\label{thm:formula of alpha^*}
	For $x,y\in \mathbb{R}^d$, let $\alpha^*_p(\cdot;x,y)\in L^{p}(0,T;\R^{n})$ be the optimal control obtained in Theorem \ref{p-action minimiser-theorem}. Then there exists a unique $\xi_p(x,y)\in \mathbb{R}^d$ such that 
	\begin{equation}
		\label{alpha^*_p formula implicit}
		\mathfrak{j}_p\big(\alpha^*_p(t;x,y)\big)=\frac{1}{p}N(t)^\top\Phi(t,T)^\top  \xi_p(x,y),
	\end{equation}
	or equivalently,
		\begin{equation}
		\label{alpha^*_p formula implicit v2}
		\alpha^*_p(t;x,y)=\frac{1}{p^{q-1}}\jj_q\pa{N(t)^\top\Phi(t,T)^\top  \xi_p(x,y)},
	\end{equation}
	a.e. with respect to the Lebesgue measure on $[0,T]$, where $\jj_p$ is defined in \amit{Lemma} \ref{frechet differentiability of J lemma}. Moreover, the following are equivalent
	\begin{enumerate}[(i)]
		\item\label{item:xi_is_zero} $\xi_p(x,y)=0$.
		\item\label{item:alpha_ast_is_zero} $\alpha^*_p(\cdot;x,y)=0$.
		\item\label{item:y_is_phi(0,T)x} $y=\Phi(0,T)x$.
	\end{enumerate}
\end{theorem}
\begin{proof}
	For a given $x,y\in \R^d$ using Lemma \ref{properties of endpoint mapping}, Theorem \ref{frechet differentiability of J lemma}, Theorem \ref{lagrange multiplier theorem rifford} with $X=U=L^p\pa{0,T;\R^n}$, $E=E_{0,T}^x$ and $J = \norm{\cdot}_{L^p\pa{0,T;\R^n}}^p$, and the fact that 
	$$J\pa{\alpha_p^\ast} \leq J\pa{\alpha},\qquad \forall \alpha\in L^p\pa{0,T; \R^n}\text{ with }E^x_{0,T}\pa{\alpha_p^\ast}=E^x_{0,T}\pa{\alpha},$$
	we find that there exist $\lambda\in \mathbb{R}$ and $\wt{\xi}\pa{x,y}\in \mathbb{R}^d$ with $(\lambda,\wt{\xi}(x,y))\neq 0$ such that 
	\begin{equation}\label{eq:xi_pre_condition}
		\wt{\xi}(x,y)^\top E^0_{0,T}(u)=p\lambda \int_{0}^T \jj_p\pa{\alpha_p^\ast(t;x,y)}^\top u(t)\dd t,\quad \forall u\in L^p\pa{0,T;\R^n}.
	\end{equation}
	We claim that $\lambda$ given in \eqref{eq:xi_pre_condition} cannot be zero. Indeed, had $\lambda=0$ then we would have concluded that 
	$$\wt{\xi}(x,y)^\top E^0_{0,T}(u)=0,\quad \forall u\in L^p\pa{0,T;\R^n}.$$
	We have seen in the proof of Theorem \ref{thm:controllability} that under Assumption \ref{asmp} $E^0_{0,T}:L^p\pa{0,T;\R^n}\to\R^d$ is surjective and as such we must have that $\wt{\xi}(x,y)=0$. This contradicts the fact that $(\lambda,\wt{\xi}(x,y))\neq 0$.
	
	Denoting by $\xi(x,y):=\frac{\wt{\xi}(x,y)}{\lambda}$
	we rewrite \eqref{eq:xi_pre_condition} as 
		\begin{equation}\nonumber
		\frac{1}{p}\int_{0}^T \xi(x,y)^\top \Phi(\tau,T)N(\tau)u(\tau)\dd\tau=\int_{0}^T \jj_p\pa{\alpha_p^\ast(\tau;x,y)}^\top u(\tau)\dd\tau,\quad \forall u\in L^p\pa{0,T;\R^n},
	\end{equation}
	from which, by the arbitrariness of $u$ we conclude the implicit expression for $\alpha_p^\ast\pa{\cdot;x,y}$, \eqref{alpha^*_p formula implicit} (since we have that $\xi(x,y)^\top \Phi(\cdot,T)N(\cdot)$ and $\jj_p\pa{\alpha^\ast\pa{\cdot;x,y}}$ are both in $ L^q\pa{0,T;\R^n}$).
	
	The uniqueness of $\xi(x,y)$ also follows from the surjectivity of $E^0_{0,T}$. Indeed, if $\xi_1(x,y)$ and $\xi_2(x,y)$ satisfy \eqref{alpha^*_p formula implicit} then 
	$$\pa{\xi_1(x,y)-\xi_2(x,y)}^\top E^0_{0,T}(u) = D_{\alpha_p^\ast}J(u)-D_{\alpha_p^\ast}J(u)=0,\quad \forall u\in L^p\pa{0,T;\R^n},$$
	which will imply that $\xi_1(x,y)=\xi_2(x,y)$.
   
   To show that \eqref{alpha^*_p formula implicit} and \eqref{alpha^*_p formula implicit v2} are equivalent we notice that for any H\"older conjugates $p,q>1$ we have that 
   $$\jj_q\pa{\jj_q(x)}= \begin{cases}
   	\abs{\jj_p(x)}^{q-2}\jj_p(x), & \jj_p(x)\not=0,\\
   	0,& \jj_p(x)=0,
   \end{cases}=\begin{cases}
   \abs{x}^{\pa{p-1}\pa{q-2}}\abs{x}^{p-2}x, & x\not=0,\\
   0,&x=0,
   \end{cases}=x,$$
   where we have used that facts that $\jj_p(x)=0$ if and only if $x=0$, $\abs{\jj_p(x)}=\abs{x}^{p-1}$, and that $$\pa{p-1}\pa{q-2}+\pa{p-2}=0.$$
   Consequently if \eqref{alpha^*_p formula implicit} holds then 
   $$	\alpha^*_p(t;x,y)=\jj_q\pa{\frac{1}{p}N(t)^\top\Phi(t,T)^\top  \xi(x,y)} =\frac{1}{p^{q-1}}\jj_q\pa{N(t)^\top\Phi(t,T)^\top  \xi(x,y)} ,$$
   which gives us \eqref{alpha^*_p formula implicit v2} and the converse holds by applying $\jj_p$ to \eqref{alpha^*_p formula implicit v2}. 
   
   Lastly, we will consider the equivalence of \eqref{item:xi_is_zero}-\eqref{item:y_is_phi(0,T)x}. 
   
   Since, by definition,
   $$y = E^x_{0,T}\pa{\alpha_p^\ast\pa{\cdot;x,y}} = \Phi\pa{0,T}x + \int_{0}^T \Phi\pa{\tau,T}N(\tau)\alpha_p^\ast\pa{\tau;x,y}\dd\tau$$
   we have that \eqref{item:alpha_ast_is_zero} implies \eqref{item:y_is_phi(0,T)x}. Conversely, if $y=\Phi\pa{0,T}x$ we see that $\pa{\Phi\pa{0,\cdot}x,0}\in \pcadm\pa{x,y}$ and as such $c_p\pa{x,y}=0 = \norm{\alpha_p^\ast\pa{\cdot;x,y}}_{L^p\pa{0,T;\R^n}}^p$, showing that $\alpha_p^\ast\pa{\cdot;x,y}=0$. 
   
   To conclude the proof we will show that \eqref{item:xi_is_zero} is equivalent to \eqref{item:alpha_ast_is_zero}. 
   
   Using \eqref{alpha^*_p formula implicit v2} we see that if $\xi_p\pa{x,y}=0$ then $\alpha_p^\ast\pa{\cdot;x,y}=0$ as $\jj_q(0)=0$.
   
   Conversely, let us assume that $\alpha_p^\ast\pa{\cdot;x,y}=0$. Using \eqref{alpha^*_p formula implicit} we see that $N(t)^\top\Phi(t,T)^\top  \xi_p(x,y)=0 $ for a.e. in $t$, and in fact since $\Phi$ and $N$ are continuous
   $$\xi_p(x,y)^\top \Phi(t,T)N(t)=0,\qquad \forall t\in [0,T].$$
  The above is nothing but equation \eqref{commutativity condition proposition - eq 2} in the proof of the controllability of the system \eqref{eq:controlled}, Theorem \ref{thm:controllability}, and as such we can copy the same proof to conclude that due to the rank condition given in Assumption \ref{asmp}, we must have that $\xi_p(x,y)=0$. The proof is now complete.
\end{proof}

\begin{remark}\label{rem:special_case_p=2}
	Theorem \ref{thm:formula of alpha^*} is particularly revealing in the special case where $p=2$. In that case, since $\jj_2(x)=x$, we find that 
	\begin{equation}\label{eq:special_form_p=2}
		\alpha^*_2(t;x,y)=\frac{1}{2}N(t)^\top \Phi(t,T)^\top  \xi_2(x,y)
	\end{equation}
	giving us an explicit connection between the minimiser $\alpha_2^\ast\pa{\cdot;x,y}$ and the Lagrange multiplier $\xi_2(x,y)$. As
	$$y=\Phi(0,T)x + \int_{0}^T \Phi(\tau,T)N(\tau)\alpha_2^\ast\pa{\tau;x,y}\dd\tau$$
	\eqref{eq:special_form_p=2} we find that 
	$$y-\Phi(0,T)x =  \frac{1}{2}\pa{\int_{0}^T \Phi(\tau,T)N(\tau)N(\tau)^\top\Phi(\tau,T)^\top \dd\tau}\xi_2(x,y).$$
	The $d\times d$ matrix $\bm{M} := \int_{0}^T \Phi(\tau,T)N(\tau)N(\tau)^\top\Phi(\tau,T)^\top \dd\tau$ is a symmetric matrix. Moreover, we see that for $v\in \R^d$
	$$v^\top \bm{M}v=0\qquad \Leftrightarrow \qquad \int_{0}^T \abs{v^\top \Phi(\tau,T)N(\tau) }^2 \dd\tau=0.$$
	The continuity of $\Phi$ and $N$ imply that the right hand side of the above is equivalent to 
	$$ v^\top \Phi(\tau,T)N(\tau) =0\qquad \forall t\in [0,T].$$
	This condition, which we encountered twice before -- predominantly in the proof of Theorem \ref{thm:controllability}, implies that under the Assumption \ref{asmp} we must have that $v=0$. Consequently, we conclude that $\bm{M}$ is invertible and
	$$\xi_2(x,y) = 2\bm{M}^{-1}\pa{y-\Phi(0,T)x}.$$
	Using \eqref{eq:special_form_p=2} we conclude that 
		\begin{equation}\label{eq:alpha_ast_2_explicit}
		\alpha^*_2(t;x,y)=N(t)^\top \Phi(t,T)^\top \bm{M}^{-1}\pa{y-\Phi(0,T)x}.
	\end{equation}
	Not only does \eqref{eq:alpha_ast_2_explicit} provides us with an explicit expression for $\alpha_2^\ast$ -- it shows its exact regularity in both $t$ and $\pa{x,y}$. Moreover, \eqref{eq:alpha_ast_2_explicit} gives us an \textit{explicit upper bound} for $c_p\pa{x,y}$ in terms of $\abs{y-\Phi(0,T)x}^p$ (in contrast to the less explicit one given in Corollary \ref{cor:upper_and_lower_bounds_on_c_p}). Indeed, $\alpha_2^\ast$ is clearly continuous in all its variables and as such $\alpha_2^\ast\pa{\cdot;x,y}\in L^p\pa{0,T;\R^n}$. As $\pa{\gamma^{0,x}_{\alpha_2^\ast\pa{\cdot;x.y}},\alpha_2^\ast\pa{\cdot;x,y}}\in \pcadm(x,y)$, where $\gamma_{\alpha_2^\ast\pa{\cdot;x,y}}^{0,x}$ is defined via \eqref{end-point mapping definition}, we conclude that
	\begin{equation}\label{eq:explicit_upper_bound_for_c_p}
		\begin{split}
			c_p\pa{x,y} \leq&\norm{\alpha_2^\ast\pa{\cdot;x,y}}^p_{L^p\pa{0,T;\R^n}} \\
			&\leq  T\norm{\Phi}^p_{L^\infty\pa{[0,T]\times [0,T]}}\norm{N}^p_{L^\infty\pa{[0,T]}}\norm{\bm{M}^{-1}}^p\abs{y-\Phi(0,T)x}^p=:C_p\abs{y-\Phi(0,T)x}^p,
		\end{split}
	\end{equation}
	where $\norm{\bm{M}^{-1}}$ is the operator norm of $\bm{M}^{-1}$. 
\end{remark}

\begin{remark}
The discussion in the previous remark shows the main difference between the cases $p=2$ (provided in \cite{CGP17}) and $p\neq 2$. The case $p=2$ gives explicit formulas, which is not the case for $p\neq 2$.

Theorem \ref{thm:formula of alpha^*} does not give us an explicit formula for $\alpha_p^\ast\pa{\cdot;x,y}$ with which we can show its continuity, at least when $p\not=2$. However, it does show that the minimiser for our cost function \textit{separates the time and space variables} and that, due to the fact that we are under Assumption \ref{asmp}, the continuity of $\alpha_p^\ast$ in all its variables is equivalent to the continuity of the Lagrange multiplier $\xi_p(x,y)  $ in the spatial variables.
\end{remark}

The last ingredient to fully prove Theorem \ref{thm:optimal_control_cost} is the following one, which is of interest in its own right.

\begin{theorem}\label{thm:continuity_of_xi}
	The Lagrange multiplier function, $\xi_p:\R^d\times \R^d\to \R^d$, defined in Theorem \ref{thm:formula of alpha^*} is continuous. Consequently, the minimiser $\alpha_p^\ast$ is continuous in all its variables.
\end{theorem}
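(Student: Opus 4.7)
The plan is to prove sequential continuity of $\xi_p$ on $\R^d\times\R^d$; once this is in hand, continuity of $\alpha_p^\ast$ in all its variables follows immediately from the explicit representation \eqref{alpha^*_p formula implicit v2} together with the continuity of $\jj_q$, $\Phi$, and $N$. The argument will proceed in three steps: (i) an a priori bound on $\xi_p(x,y)$ that yields compactness along converging sequences $(x_n,y_n)\to(x,y)$; (ii) extraction of a subsequential limit $\xi^\ast$ and passage to the limit in the optimality system; (iii) identification $\xi^\ast=\xi_p(x,y)$ via sufficiency of the Lagrange condition in the convex problem defining $c_p$.

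First I would establish a quantitative bound of the form $\abs{\xi_p(x,y)}\leq C\abs{y-\Phi(0,T)x}^{p-1}$. Pairing the implicit relation \eqref{alpha^*_p formula implicit} against $\alpha_p^\ast(\cdot;x,y)$ and using the duality identity $\abs{\jj_p(\alpha)}^q=\abs{\alpha}^p$ (with $q=p/(p-1)$) yields
$$\int_0^T\abs{N(t)^\top\Phi(t,T)^\top\xi_p(x,y)}^q\dd t=p^q c_p(x,y)\leq p^q K_2\abs{y-\Phi(0,T)x}^p,$$
where the final inequality is Corollary \ref{cor:upper_and_lower_bounds_on_c_p}. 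On the other hand, the linear map $v\in\R^d\mapsto\pa{t\mapsto N(t)^\top\Phi(t,T)^\top v}\in L^q(0,T;\R^n)$ is injective --- this is precisely the rank-condition argument used in the final part of the proof of Theorem \ref{thm:controllability} --- and since its domain is finite dimensional, equivalence of norms provides some $c>0$ with $\int_0^T\abs{N(t)^\top\Phi(t,T)^\top v}^q\dd t\geq c\abs{v}^q$ for all $v\in\R^d$. Combining the two inequalities delivers the desired bound.

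Next, given $(x_n,y_n)\to(x,y)$, the bound shows that $\xi_n:=\xi_p(x_n,y_n)$ is bounded in $\R^d$. Along any subsequence I would extract a further subsequence $\xi_{n_k}\to\xi^\ast$. Setting $\alpha_k:=\alpha_p^\ast(\cdot;x_{n_k},y_{n_k})$, formula \eqref{alpha^*_p formula implicit v2} and continuity of $\jj_q$ give pointwise convergence $\alpha_k(t)\to\alpha^\ast(t):=\frac{1}{p^{q-1}}\jj_q\pa{N(t)^\top\Phi(t,T)^\top\xi^\ast}$, while the uniform pointwise bound $\abs{\alpha_k(t)}\leq C'\abs{\xi_{n_k}}^{q-1}$ allows dominated convergence to upgrade this to $\alpha_k\to\alpha^\ast$ in $L^p(0,T;\R^n)$. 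Passing to the limit in $y_{n_k}=E_{0,T}^{x_{n_k}}(\alpha_k)$ using Lemma \ref{properties of endpoint mapping} then shows that $\alpha^\ast$ (together with its associated trajectory) lies in $\pcadm(x,y)$, and by construction $\alpha^\ast$ satisfies the Lagrange relation with multiplier $\xi^\ast$.

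Finally I would close the argument by exploiting that $J$ is convex and Fr\'echet differentiable (Lemma \ref{frechet differentiability of J lemma}) while $E_{0,T}^x$ is affine, so the Lagrange condition is also \emph{sufficient} for optimality: for any competitor $\beta\in\pcadm(x,y)$,
$$J(\beta)-J(\alpha^\ast)\geq D_{\alpha^\ast}J(\beta-\alpha^\ast)=(\xi^\ast)^\top D_{\alpha^\ast}E_{0,T}^x(\beta-\alpha^\ast)=(\xi^\ast)^\top(y-y)=0.$$
Hence $\alpha^\ast=\alpha_p^\ast(\cdot;x,y)$, and uniqueness of the multiplier in Theorem \ref{thm:formula of alpha^*} then forces $\xi^\ast=\xi_p(x,y)$. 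Since every subsequence of $(\xi_n)$ admits a further subsequence converging to this common limit, the whole sequence converges, proving continuity of $\xi_p$. The main obstacle is Step 1, where the (purely qualitative) rank condition of Assumption \ref{asmp} must be converted into a genuinely quantitative coercivity estimate; Steps 2 and 3 are then a standard compactness-plus-uniqueness argument.
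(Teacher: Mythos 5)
Your proposal is correct, and while its overall skeleton (a priori bound on $\xi_p$, compactness along a converging sequence, identification of the subsequential limit, then a subsequence--of--subsequence argument) coincides with the paper's, both key technical steps are carried out by genuinely different means. For the a priori bound, the paper pairs the multiplier relation against controls $u_i$ with $E^0_{0,T}(u_i)=e_i$ (surjectivity of $E^0_{0,T}$) to get $\abs{\xi_p(x,y)}\leq C_E\, c_p(x,y)^{1/q}$ and then invokes the continuity of $c_p$; you instead pair against $\alpha_p^\ast$ itself to obtain $\int_0^T\abs{N^\top\Phi^\top\xi_p}^q\dd t=p^qc_p(x,y)$ and convert the rank condition into the coercivity estimate $\int_0^T\abs{N(t)^\top\Phi(t,T)^\top v}^q\dd t\geq c\abs{v}^q$ (injectivity of the adjoint map plus finite-dimensional norm equivalence), which is the dual route and yields the sharper quantitative bound $\abs{\xi_p(x,y)}\leq C\abs{y-\Phi(0,T)x}^{p-1}$ using only part \eqref{item:upper_bound_on_c_p} of Corollary \ref{cor:upper_and_lower_bounds_on_c_p}. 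For the identification step, the paper shows the limiting control is admissible and then optimal via Fatou's lemma together with the continuity of $c_p$, before invoking uniqueness of the minimiser and of the multiplier; you instead observe that since $J$ is convex and Fr\'echet differentiable and $E^x_{0,T}$ is affine, the stationarity relation carried by $\xi^\ast$ is \emph{sufficient} for optimality ($J(\beta)-J(\alpha^\ast)\geq D_{\alpha^\ast}J(\beta-\alpha^\ast)=(\xi^\ast)^\top E^0_{0,T}(\beta-\alpha^\ast)=0$ for every admissible competitor), which bypasses Fatou and the continuity of $c_p$ entirely and is arguably the more structural argument; the gradient inequality you need is exactly the lower bound in Lemma \ref{technical estimates on |x|^p-|y|^p - lemma AGS}, integrated in $t$. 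Both proofs then conclude identically from the uniqueness of the minimiser and of the Lagrange multiplier established in Theorems \ref{p-action minimiser-theorem} and \ref{thm:formula of alpha^*}.
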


\begin{proof}
	  
	The fact that $\alpha^\ast_p$ is continuous in all its variables when $\xi_p$ is follows from \eqref{alpha^*_p formula implicit v2}, and the continuity of $\Phi$, $N$, and $\jj_p$ for $p>1$. 
	
	We turn our attention, thus, to the continuity of $\xi_p(x,y)$. Since $\R^{2d}$ is a metric space, to show the continuity of $\xi_p$ it is enough to show that if $\pa{x_m,y_m}\underset{m\to\infty}{\longrightarrow}\pa{x,y}$, then for any subsequence of $\pa{x_m,y_m}_{m\in\N}$, $\pa{x_{m_k},y_{m_k}}_{k\in\N}$, there exists a subsequence $\pa{x_{m_{k_j}},y_{m_{k_j}}}_{j\in\N}$ such that 
	$$\xi_p\pa{x_{m_{k_j}},y_{m_{k_j}}}\underset{j\to\infty}{\longrightarrow}\xi_p(x,y).$$
	We start by showing that $\xi_p\pa{x,y}$ is controlled by $c_p(x,y)^{\frac{1}{q}}$. Recall that we have shown in the proof of Theorem \ref{thm:formula of alpha^*} that $\xi_p(x,y)$ is the unique vector in $\R^d$ such that 
	$$\xi_p(x,y)^\top E^0_{0,T}(u) = \int_{0}^T \jj_p\pa{\alpha_p^\ast(t;x,y)}^{\top}u(t)\dd t\quad \forall u\in L^p\pa{0,T;\R^n},$$
	and consequently
\begin{align*}	
	\abs{\xi(x,y)^\top E^0_{0,T}(u)} &\leq \norm{\jj_p\pa{\alpha_p^\ast\pa{\cdot;x,y}}}_{L^q\pa{0,T;\R^n}}\norm{u}_{L^p\pa{0,T;\R^n}}\\
	&=\norm{\alpha_p^\ast\pa{\cdot;x,y}}_{L^p\pa{0,T;\R^n}}^{\frac{p}{q}}\norm{u}_{L^p\pa{0,T;\R^n}}=c_p\pa{x,y}^{\frac{1}{q}}\norm{u}_{L^p\pa{0,T;\R^n}},
\end{align*}
	where we have used the fact that $\abs{\jj_p\pa{\alpha}}=\abs{\alpha}^{p-1}$ and the definition of $c_p\pa{x,y}$. By Theorem \ref{thm:controllability} we know that under Assumption \ref{asmp} $E^0_{0,T}$ is surjective and as such for every $i\in \br{1,\dots, d}$ we can find $u_i\in L^p\pa{0,T;\R^n}$ such that $E^0_{0,T}(u_i)=e_i$, where $\br{e_i}_{i=1,\dots,d}$ is the standard basis for $\R^d$. We conclude that 
	\begin{equation}\label{eq:upper_bound_on_xi_with_c_p}
		\abs{\xi_p(x,y)} = \sqrt{\sum_{i=1}^d(\xi(x,y)^\top e_i)^2} \leq c_p\pa{x,y}^{\frac{1}{q}}\sqrt{\sum_{i=1}^d\norm{u_i}^2_{L^p\pa{0,T;\R^n}}}:=C_{E}c_p\pa{x,y}^{\frac{1}{q}}.
	\end{equation}
	From Corollary \ref{cor:upper_and_lower_bounds_on_c_p} we know that $c_p$ is continuous and as such so is $c_p^{\frac{1}{q}}$. Since $\pa{x_{m_{k}},y_{m_{k}}}_{k\in\N}$ converges to $\pa{x,y}$ we have that $\left(c_p\pa{x_{m_{k}},y_{m_k}}^{\frac{1}{q}}\right)_{k\in\N}$ converges to $c_p\pa{x,y}^{\frac{1}{q}}$ and in particular
	$$\sup_{k\in\N}\abs{\xi_p\pa{x_{m_k},y_{m_k}}} \leq C_E\sup_{k\in\N}c_p\pa{x_{m_{k}},y_{m_k}}^{\frac{1}{q}}<\infty.$$
	Using the Heine--Borel theorem we extract a subsequence of $\pa{x_{m_{k}},y_{m_{k}}}_{k\in\N}$, $\pa{x_{m_{k_j}},y_{m_{k_j}}}_{j\in\N}$, such that 
	$$\xi_p\pa{x_{m_{k_j}},y_{m_{k_j}}}\underset{j\to\infty}{\longrightarrow }\xi,$$
	for some $\xi\in \R^d$. If we show that $\xi=\xi_p(x,y)$ we will conclude the proof. 
	
	Using \eqref{alpha^*_p formula implicit v2} we find that 
	$$\alpha_p^\ast(t,x_{m_{k_j}},y_{m_{k_j}}) =\frac{1}{p^{q-1}}\jj_q\pa{N(t)^\top\Phi(t,T)^\top  \xi_p\pa{x_{m_{k_j}},y_{m_{k_j}}}} \underset{j\to\infty}{\longrightarrow} \frac{1}{p^{q-1}}\jj_q\pa{N(t)^\top\Phi(t,T)^\top  \xi}$$
	pointwise a.e. in $t$, due to the continuity of all the functions involved. Moreover, the above also shows that
\begin{align*}	
	\sup_{j\in\N}\abs{\alpha_p^\ast(t,x_{m_{k_j}},y_{m_{k_j}}) } &=\frac{1}{p^{q-1}}\sup_{j\in\N}\abs{N(t)^\top\Phi(t,T)^\top  \xi_p\pa{x_{m_{k_j}},y_{m_{k_j}}}}^{q-1}\\
	&\leq \frac{1}{p^{q-1}} \norm{\Phi}_{L^\infty\pa{[0,T]\times [0,T]}}^{q-1}\norm{N}_{L^\infty\pa{[0,T]}}^{q-1}\pa{\sup_{j\in\N}\abs{\xi_p\pa{x_{m_{k_j}},y_{m_{k_j}}}}}^{q-1}<\infty,
\end{align*}
	from which we conclude that as
	$$y_{m_{k_j}} = \Phi(0,T)x_{m_{k_j}}+\int_{0}^T\Phi(\tau,T)N(\tau)\alpha_p^\ast\pa{\tau;x_{m_{k_j}},y_{m_{k_j}}}\dd\tau,$$
	taking $j$ to infinity and using the convergence of $\pa{x_{m_{k_j}},y_{m_{k_j}}}_{j\in\N}$, the continuity of $\Phi(0,T)$, and the dominated convergence theorem we get that 
	$$y=\Phi(0,T)x + \int_{0}^T\Phi(\tau,T)N(\tau)\frac{1}{p^{q-1}}\jj_q\pa{N(t)^\top\Phi(t,T)^\top  \xi}\dd\tau.$$
	Denoting by $\alpha(t) := \frac{1}{p^{q-1}}\jj_q\pa{N(t)^\top\Phi(t,T)^\top  \xi}$ we see that $\alpha\in L^p\pa{0,T;\R^n}$ and that $\pa{\gamma_{\alpha}^{0,x},\alpha}\in \pcadm\pa{x,y}$, where $\gamma_{\alpha}^{0,x}$ is defined via \eqref{end-point mapping definition}. This implies that 
\begin{align*}	
	c_p\pa{x,y} & \leq \int_{0}^T \abs{\alpha(t)}^p\dd t \leq \liminf_{j\to\infty}\int_{0}^T \abs{\alpha_p^\ast\pa{t;x_{m_{k_j}},y_{m_{k_j}}}}^p\dd t\\
	&=\liminf_{j\to\infty}c_p\pa{x_{m_{k_j}},y_{m_{k_j}}}=c_{p}\pa{x,y},
\end{align*}
	where we have used Fatou's lemma and the continuity of $c_p$. We conclude that 
	$$c_p\pa{x,y} = \int_{0}^T \abs{\alpha(t)}^p\dd t $$
	and due to the uniqueness of the minimiser
	$$\frac{1}{p^{q-1}}\jj_q\pa{N(t)^\top\Phi(t,T)^\top  \xi_p(x,y)}=\alpha_p^\ast\pa{t;x,y}=\alpha(t) = \frac{1}{p^{q-1}}\jj_q\pa{N(t)^\top\Phi(t,T)^\top  \xi}.$$
	As Theorem \ref{thm:formula of alpha^*} guarantees that the Lagrange multiplier is unique we find that $\xi=\xi_p(x,y)$, which is what we wanted to show. The proof is thus complete.
\end{proof}

We conclude this section by gathering all the results we've shown to prove Theorem \ref{thm:optimal_control_cost}:

\begin{proof}[Proof of Theorem \ref{thm:optimal_control_cost}]
	The proof is an immediate consequence of corollaries \ref{cor:pcadm_is_not_empty} and \ref{cor:upper_and_lower_bounds_on_c_p}, and theorems \ref{p-action minimiser-theorem} and \ref{thm:continuity_of_xi}.
\end{proof}

Now that our study of the system of controlled ODEs is complete, we turn our attention to the study of the generalised continuity equation.

\section{The continuity equation and superposition principles}\label{sec:dynamics}

Let $p>1$, $\mu,\nu \in \mathcal{P}_p(\R^d)$ and $T>0$ be given. In this section we will focus our attention on the generalised continuity equation

\begin{equation}\label{eq:cnt_1}
\left\{	
	\begin{array}{ll}
		\partial_t\rho_t(x) +  \div (\rho_t(x) b(t,x,u_{t}(x)))=0,& \pa{t,x}\in (0,T)\times\R^d,\\
		\rho_0=\mu,\quad \rho_T=\nu,
	\end{array}
\right.
\end{equation}
where the vector field $b:[0,T]\times\R^d\times\R^{n}\to\R^d$ has the special form 
$$
b(t,x,u_{t}(x)):=M(t)x + N(t)u_{t}(x).
$$
Here we recall that $M:[0,T]\to \R^{d\times d}$ and $N:[0,T]\to \R^{d\times n}$ are the matrices from the previous sections.

The unknown of \eqref{eq:cnt_1} is a pair, $\pa{\rho,u}_{t\in[0,T]}$, where $\rho:[0,T]\to \PP_p\pa{\R^d}$ is a narrowly continuous curve such that $\displaystyle\int_0^T\int_{\R^d}\abs{x}^p\dd\rho_t(x)\dd t<\infty $, $u$ is a Borel vector field such that $u\in L^p_tL^p_{\rho_t}([0,T]\times \mathbb{R}^d;\mathbb{R}^n)$, and \eqref{eq:cnt_1} holds in the sense of distribution on $\pa{0,T}\times\R^d$.

We will study \eqref{eq:cnt_1}  by considering measures that are concentrated on paths generated by the ODEs from \eqref{eq:controlled}. The approach and techniques we will use in this section are standard for experts, see, for instance, \cite{AGS08}, yet our detailed study of the cost function $c_p$ and its associated minimiser $\alpha^\ast_p$ will simplify many of our proofs. 

Much like in the previous section, we will assume that Assumption \ref{asmp} holds throughout this section. We will work in the Banach space
\begin{equation}\label{def:X}
\X:=\pa{C(0,T;\mathbb{R}^d),\norm{\cdot}_{L^\infty\pa{[0,T]; \R^d}}} 
\end{equation}
and consider the set 
\begin{equation}\nonumber
	\Gamma_p:=\left\{\gamma\in \mathcal{X}:\gamma'(t)=M(t)\gamma(t)+N(t)\alpha_p^*(t;\gamma(0),\gamma(T))\right\}
\end{equation}

\amit{Notice that any element of $\Gamma_p$ is absolutely continuous. Furthermore,} the fact that $\pa{\gamma^{0,x}_{\alpha^\ast_p\pa{\cdot;x,y}},\alpha^\ast_p\pa{\cdot;x,y}}\in\pcadm\pa{x,y}$ and Remark \ref{rem:gamma_s_x_alpha_is_a_solution} guarantee that $\Gamma_p\neq\emptyset$. In fact, the collection of curves $\br{\gamma^{0,x}_{\alpha^\ast_p\pa{\cdot;x,y}}}_{\pa{x,y}\in \R^d\times\R^d}$ represents $\Gamma_p$ uniquely. This is expressed in the following lemma.
\begin{lemma}
    \label{e_0,T is bijective on Gamma_p}
    Consider the continuous map $\mathfrak{e}_{0,T}:\X\to \R^d\times \R^d $ defined by
    \begin{equation}\label{eq:def_of_mathfrak_e}
    	\mathfrak{e}_{0,T}\pa{\gamma} = \pa{\gamma(0),\gamma(T)}
    \end{equation}
    Then, its restriction on $\Gamma_p$, $\mathfrak{e}_{0,T}|_{\Gamma_p}:\Gamma_p\to \R^d\times \R^d$, is a homeomorphism. In particular, for any $x,y\in\R^d$ we have $\mathfrak{e}_{0,T}|_{\Gamma_p}^{-1}(x,y)=\gamma^{0,x}_{\alpha^*_p(\cdot;x,y)}$.
\end{lemma}
\begin{proof}
    We start by noticing that for any $x,y\in\R^d$ we have that $\gamma^{0,x}_{\alpha^*_p(\cdot;x,y)}\in \Gamma_p$ and $\e_{0,T}(\gamma^{0,x}_{\alpha^*_p(\cdot;x,y)})=(x,y)$, and consequently $\e_{0,T}\vert_{\Gamma_p}$ is surjective. To show injectivity we notice that if $\gamma_1,\gamma_2\in \Gamma_p$, $\gamma_1(0)=\gamma_2(0)$ and $\gamma_1(T)=\gamma_2(T)$ then since $\gamma_1$ and $\gamma_2$ satisfy the same classical ODE (recall that $M$, $N$, and $\alpha_p^\ast\pa{\cdot;x,y}$ are continuous) and have the same initial conditions we must have that $\gamma_1= \gamma_2$. In particular, if $\gamma\in \Gamma_p$ are such that $\e_{0,T}(\gamma)=\pa{x,y}$ then we must have that 
    $$\gamma= \gamma^{0,x}_{\alpha_p^\ast\pa{\cdot;x,y}}.$$
    
    To conclude our lemma we are only left with showing that $\mathfrak{e}_{0,T}|_{\Gamma_p}^{-1}$ is continuous. 
    Indeed, let $\pa{x_n,y_n}_{n\in\N}$ be a sequence in $\R^d\times \R^d$ which converges to $\pa{x,y}\in\R^d\times\R^{d}$. Then 
	\begin{equation}\nonumber
	\begin{split}
		&\abs{\gamma^{0,x_n}_{\alpha_p^\ast\pa{\cdot;x_n,y_n}}(t)-\gamma^{0,x}_{\alpha_p^\ast\pa{\cdot;x,y}}(t)}
		\leq \abs{\Phi(0,t)\pa{x_n-x}} + \int_{0}^t \abs{\Phi\pa{\tau,t}N(\tau)\pa{\alpha_p^\ast\pa{\tau;x_n,y_n}-\alpha_p^\ast\pa{\tau;x,y}}\dd\tau}\\
		&\leq \norm{\Phi}_{L^\infty\pa{[0,T]\times [0,T]}}\pa{1+\norm{N}_{L^\infty\pa{[0,T]}}}\pa{\abs{x_n-x}+ T^{\frac{1}{q}}\pa{\int_{0}^T\abs{\alpha_p^\ast\pa{\tau;x_n,y_n}-\alpha_p^\ast\pa{\tau;x,y}}^p\dd\tau}^{\frac{1}{p}}}.
	\end{split}	
	\end{equation}
	As the right-hand side is independent of $t$ we conclude that
	\begin{equation}\label{eq:cont_invertible_e_0T}
		\begin{split}
			&	\norm{\e_{0,T}\vert_{\Gamma_p}^{-1}\pa{x_n,y_n}-\e_{0,T}\vert_{\Gamma_p}^{-1}\pa{x,y}}_{L^\infty\pa{[0,T]}} \leq \norm{\Phi}_{L^\infty\pa{[0,T]\times [0,T]}}\pa{1+\norm{N}_{L^\infty\pa{[0,T]}}}\\
			&\qquad\qquad\qquad \times\pa{\abs{x_n-x}+ T^{\frac{1}{q}}\pa{\int_{0}^T\abs{\alpha_p^\ast\pa{\tau;x_n,y_n}-\alpha_p^\ast\pa{\tau;x,y}}^p\dd\tau}^{\frac{1}{p}}}.
		\end{split}
	\end{equation} 
	Recall that $\alpha_p^\ast\in C\pa{[0,T]\times\R^d\times\R^d;\R^n}$ according to Theorem \ref{thm:continuity_of_xi}. Defining $f_{n},g_{n},g:[0,T]\to[0,\infty)$ as 
\begin{align*}	
	f_n(t)&:=\abs{\alpha_p^\ast\pa{t;x_n,y_n}-\alpha_p^\ast\pa{t;x,y}}^p,\\
	g_n(t)&:=2^{p-1}\pa{\abs{\alpha_p^\ast\pa{t;x_n,y_n}}^p+\abs{\alpha_p^\ast\pa{t;x,y}}^p},\\
	g(t)&:=2^p\abs{\alpha_p^\ast\pa{t;x,y}}^p,
\end{align*}
	we find that
	$$f_n(t)=\abs{f_n(t)} \leq g_n(t),\qquad f_n(t)\underset{n\to\infty}{\longrightarrow}0,\qquad g_n(t)\underset{n\to\infty}{\longrightarrow}g(t),$$ 
	pointwise, and
	$$\int_{0}^T g_n(t)\dd t = 2^{p-1}\pa{c_p\pa{x_n,y_n}+c_p(x,y)} \underset{n\to\infty}{\longrightarrow}2^p c_p(x,y) = \int_{0}^T g(t)\dd t,$$
	where we have used the definition and continuity of $(x,y)\mapsto c_p(x,y)$ (Corollary \ref{cor:upper_and_lower_bounds_on_c_p} for the latter). Consequently, by the generalised dominated convergence theorem (see Lemma \ref{generalised dct lemma}) and \eqref{eq:cont_invertible_e_0T} we have that 
	$$\lim_{n\to\infty}\norm{\e_{0,T}\vert_{\Gamma_p}^{-1}\pa{x_n,y_n}-\e_{0,T}\vert_{\Gamma_p}^{-1}\pa{x,y}}_{L^\infty\pa{[0,T]}} =0,$$
	which shows the continuity of the inverse map $\e_{0,T}\vert_{\Gamma_p}^{-1}$.
\end{proof}

We can say more:

\begin{lemma}
	\label{Gamma_p is closed lemma}
	$\Gamma_p$ is closed in $\mathcal{X}$ and as such is a Polish space. 
\end{lemma}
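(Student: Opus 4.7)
The plan is to show that $\Gamma_p$ is sequentially closed in $\mathcal{X}$; since $\mathcal{X}$ is metrisable, this is equivalent to closedness. The ``Polish space'' assertion then follows immediately because $\mathcal{X}$ is a separable Banach space (hence Polish), and closed subsets of Polish spaces inherit the Polish structure.

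Let $(\gamma_n)_{n\in\mathbb{N}}\subset\Gamma_p$ satisfy $\gamma_n\to\gamma$ in $\mathcal{X}$. As convergence in $\mathcal{X}$ is uniform on $[0,T]$, the endpoints
$$x_n:=\gamma_n(0),\quad y_n:=\gamma_n(T),\quad x:=\gamma(0),\quad y:=\gamma(T)$$
satisfy $x_n\to x$ and $y_n\to y$ in $\mathbb{R}^d$. The definition of $\Gamma_p$ together with Remark~\ref{rem:gamma_s_x_alpha_is_a_solution} provides the Duhamel representation
\begin{equation*}
\gamma_n(t)=\Phi(0,t)x_n+\int_0^t\Phi(\tau,t)N(\tau)\alpha_p^\ast(\tau;x_n,y_n)\,\dd\tau,\qquad t\in[0,T].
\end{equation*}
My strategy is to pass to the limit $n\to\infty$ in this identity, obtain the analogous Duhamel formula for $\gamma$ with control $\alpha_p^\ast(\cdot;x,y)$, and then invoke Remark~\ref{rem:gamma_s_x_alpha_is_a_solution} in the reverse direction to conclude $\gamma\in\Gamma_p$.

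The key ingredient is a uniform-in-$n$ bound on the integrand, which is the one mild obstacle in the argument. By Theorem~\ref{thm:continuity_of_xi}, the joint continuity of $\alpha_p^\ast$ gives pointwise convergence $\alpha_p^\ast(\tau;x_n,y_n)\to\alpha_p^\ast(\tau;x,y)$ for every $\tau\in[0,T]$. For the dominating function, the explicit formula \eqref{alpha^*_p formula implicit v2} combined with $|\mathfrak{j}_q(v)|=|v|^{q-1}$ and the uniform bounds on $\Phi$ and $N$ yields
\begin{equation*}
|\alpha_p^\ast(\tau;x_n,y_n)|\leq \frac{1}{p^{q-1}}\norm{N}_{L^\infty([0,T])}^{q-1}\norm{\Phi}_{L^\infty([0,T]^2)}^{q-1}|\xi_p(x_n,y_n)|^{q-1}.
\end{equation*}
The a priori estimate \eqref{eq:upper_bound_on_xi_with_c_p}, together with the continuity of $c_p$ from Corollary~\ref{cor:upper_and_lower_bounds_on_c_p} applied to the convergent sequence $(x_n,y_n)$, yields $\sup_n|\xi_p(x_n,y_n)|<\infty$. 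Hence the integrands are uniformly bounded on $[0,T]$ and the dominated convergence theorem gives
\begin{equation*}
\int_0^t\Phi(\tau,t)N(\tau)\alpha_p^\ast(\tau;x_n,y_n)\,\dd\tau\longrightarrow\int_0^t\Phi(\tau,t)N(\tau)\alpha_p^\ast(\tau;x,y)\,\dd\tau.
\end{equation*}

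Combining this with $\Phi(0,t)x_n\to\Phi(0,t)x$ and $\gamma_n(t)\to\gamma(t)$ leads to
\begin{equation*}
\gamma(t)=\Phi(0,t)x+\int_0^t\Phi(\tau,t)N(\tau)\alpha_p^\ast(\tau;\gamma(0),\gamma(T))\,\dd\tau,\qquad t\in[0,T].
\end{equation*}
Remark~\ref{rem:gamma_s_x_alpha_is_a_solution} then asserts that $\gamma\in W^{1,p}(0,T;\mathbb{R}^d)$ and satisfies $\gamma'(t)=M(t)\gamma(t)+N(t)\alpha_p^\ast(t;\gamma(0),\gamma(T))$ for almost every $t\in[0,T]$, so $\gamma\in\Gamma_p$. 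The uniform limit $\gamma$ is continuous, confirming $\gamma\in\mathcal{X}$, and the proof is complete.
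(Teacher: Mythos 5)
Your proof is correct and follows essentially the same route as the paper's: uniform convergence of the endpoints, continuity of $\alpha_p^\ast$ (via Theorem \ref{thm:continuity_of_xi}), the uniform bound on the controls obtained from \eqref{alpha^*_p formula implicit v2} together with \eqref{eq:upper_bound_on_xi_with_c_p} and the continuity of $c_p$, dominated convergence in the Duhamel formula, and Remark \ref{rem:gamma_s_x_alpha_is_a_solution} to conclude $\gamma\in\Gamma_p$. No issues to report.
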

\begin{proof}
	Assume that we have a sequence $(\gamma_n)_{n\in\N}$ in  $\Gamma_p$ that converges to some $\gamma\in \X$. We find that 
	$$\gamma(0) = \lim_{n\to\infty}\gamma_n(0),\quad \gamma(T) = \lim_{n\to\infty}\gamma_n(T),\quad \text{and}\quad \alpha_p^\ast\pa{t;\gamma(0),\gamma(T)}=\lim_{n\to\infty}\alpha_{p}^{\ast}\pa{t;\gamma_n(0),\gamma_n(T)}$$
	where we have used the continuity of $\alpha_p^\ast$, guaranteed by Theorem \ref{thm:continuity_of_xi}.
	Moreover, using \eqref{alpha^*_p formula implicit v2} together with \eqref{eq:upper_bound_on_xi_with_c_p} we find that
$$\abs{\alpha^\ast_p\pa{t;\gamma_n(0),\gamma_n(T)}} \leq C_E^{q-1}\frac{1}{p^{q-1}} \norm{\Phi}_{L^\infty\pa{[0,T]\times [0,T]}}^{q-1}\norm{N}_{L^\infty\pa{[0,T]}}^{q-1}\sup_{j\in\N}c_p\pa{\gamma_n(0),\gamma_n(T)}^{\frac{q-1}{q}}<\infty,$$
	where $C_E$ is a constant which was defined in the proof of Theorem \ref{thm:continuity_of_xi}, and where we have used the continuity of $c_p$ (Corollary \ref{cor:upper_and_lower_bounds_on_c_p}) and the convergence of $(\gamma_n(0))_{n\in\N}$ and $(\gamma_n(T))_{n\in\N}$.
	
	Combining the above with the fact that $(\gamma_n)_{n\in\N}$ is a sequence in $\Gamma_p$ we find that for any $t\in [0,T]$
	$$\gamma(t) = \lim_{n\to\infty}\gamma_n(t) = \lim_{n\to\infty}\pa{\Phi(0,t)\gamma_n(0)+ \int_{0}^T \Phi(\tau,T)N(\tau)\alpha^\ast_p\pa{\tau;\gamma_n(0),\gamma_n(T)}\dd\tau}$$
	$$=\Phi(0,t)\gamma(0)+ \int_{0}^T \Phi(\tau,T)N(\tau)\alpha^\ast_p\pa{\tau;\gamma(0),\gamma(T)}\dd\tau.$$
	where we have used the continuity of $\Phi$ and $N$, as well as the dominated convergence theorem. As the above implies that $\gamma\in \Gamma_p$ following Remark \ref{rem:gamma_s_x_alpha_is_a_solution}, the proof is now complete.
\end{proof}

\begin{remark}\label{rem:less_conditions_for_Gamma_p}
	Looking at the proof of Lemma \ref{Gamma_p is closed lemma} we notice that due to the properties of $\alpha^\ast_p$ we did not really need to assume the uniform convergence of $(\gamma_n)_{n\in\N}$ and only needed pointwise convergence.
 \end{remark}

The set $\Gamma_p$ will give us the ability to connect between two measures $\mu$ and $\nu$ on a path of characteristics. 

As the evaluation map $e_t:\X\to \R^d$ defined by
\begin{equation}\label{eq:evoluation_map_at_t}
	e_t\pa{\gamma}:=\gamma(t)
\end{equation}
is continuous on $\X$, we can define a path of probability measures on $\R^d$ from any probability measure $\eta\in \PP\pa{\X} $ by 
$$\eta_t = {e_t}_{\sharp}\eta.$$
If our chosen $\eta$ is concentrated on $\Gamma_p$, we will be able to ``extract'' from the path of measures $\left({e_t}_{\sharp}\eta\right)_{t\in[0,T]}$ a pair $\pa{\rho,u}\in \cadm\pa{{e_0}_{\sharp}\eta,{e_T}_{\sharp}\eta}$.

We will require the next result to bring this intuition to light. 

\begin{theorem}
	\label{thm:relaxed controllability}
	Let $\mu,\nu\in\mathcal{P}_p(\mathbb{R}^d)$. Then the set
	\begin{equation}\label{eq:def_of_A_p}
		\A_p(\mu,\nu)
		:=\{\eta\in\mathcal{P}(\mathcal{X})\;|\; \eta(\Gamma_p)=1,{e_0}_\sharp\eta=\mu, {e_T}_\sharp\eta=\nu\}
	\end{equation}
	is not empty.  
\end{theorem}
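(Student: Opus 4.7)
The plan is to produce a suitable $\eta$ as the pushforward of an arbitrary transport plan under the continuous map that sends each endpoint pair to the corresponding optimal trajectory.

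First, I would pick any $\pi\in\Pi(\mu,\nu)$, which is nonempty since the product measure $\mu\otimes\nu$ always lies in it. Next, I would define $\Psi:\R^d\times\R^d\to\X$ by
$$\Psi(x,y)(t):=\Phi(0,t)x+\int_0^t\Phi(\tau,t)N(\tau)\alpha^\ast_p(\tau;x,y)\,\dd\tau.$$
By Theorem \ref{thm:optimal_control_cost} and Remark \ref{rem:gamma_s_x_alpha_is_a_solution}, the curve $\Psi(x,y)$ coincides with $\gamma^{0,x}_{\alpha^\ast_p(\cdot;x,y)}$, so it satisfies $\Psi(x,y)(0)=x$, $\Psi(x,y)(T)=y$, and $\Psi(x,y)\in\Gamma_p$. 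Finally I would set $\eta:=\Psi_\sharp\pi$. Once $\Psi$ is shown to be Borel, the three defining properties of $\A_p(\mu,\nu)$ fall out mechanically: we have $\eta(\Gamma_p)=\pi(\Psi^{-1}(\Gamma_p))=1$ since $\Psi$ maps into $\Gamma_p$, while $e_0\circ\Psi(x,y)=x$ and $e_T\circ\Psi(x,y)=y$ give $(e_0)_\sharp\eta=(\pi^x)_\sharp\pi=\mu$ and $(e_T)_\sharp\eta=(\pi^y)_\sharp\pi=\nu$.

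The main step is therefore the continuity (and thus Borel measurability) of $\Psi$ as a map into $(\X,\norm{\cdot}_{L^\infty})$. If $(x_n,y_n)\to(x,y)$ in $\R^d\times\R^d$, then Theorem \ref{thm:continuity_of_xi} gives the pointwise convergence $\alpha^\ast_p(\tau;x_n,y_n)\to\alpha^\ast_p(\tau;x,y)$ for each $\tau\in[0,T]$. Combining the explicit formula \eqref{alpha^*_p formula implicit v2} with the estimate \eqref{eq:upper_bound_on_xi_with_c_p} and the identity $\abs{\jj_q(v)}=\abs{v}^{q-1}$ yields the bound
$$\abs{\alpha^\ast_p(\tau;x_n,y_n)}\leq \tfrac{1}{p^{q-1}}\pa{\norm{\Phi}_{L^\infty}\norm{N}_{L^\infty}C_E}^{q-1}c_p(x_n,y_n)^{(q-1)/q},$$
which is uniform in $\tau$ and, thanks to the continuity of $c_p$ (Corollary \ref{cor:upper_and_lower_bounds_on_c_p}), bounded uniformly in $n$. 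Dominated convergence then upgrades the pointwise convergence of the controls to $L^1([0,T];\R^n)$ convergence; plugging this into the definition of $\Psi$ and using the boundedness of $\Phi$ and $N$, together with the trivial convergence $\Phi(0,t)x_n\to\Phi(0,t)x$ uniformly in $t$, gives $\norm{\Psi(x_n,y_n)-\Psi(x,y)}_{L^\infty}\to 0$.

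The main technical content is thus the continuity of $\Psi$, but all required inputs (continuity of $c_p$, continuity of $\alpha^\ast_p$, and the quantitative bound on the Lagrange multiplier) are already established in the preceding sections, so the construction reduces essentially to a pushforward argument. In particular, the genuinely delicate analytic work has already been done in proving Theorems \ref{thm:optimal_control_cost} and \ref{thm:continuity_of_xi}; the role of the present statement is to package those results into a measure-theoretic object that will, in the sequel, be used to build admissible pairs $(\rho_t,u_t)_{t\in[0,T]}$ for the dynamic problem.
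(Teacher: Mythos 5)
Your construction is correct, but it is genuinely different from the paper's proof. The paper proceeds by approximation: it discretises $\mu,\nu$ by empirical measures, forms finite combinations of Dirac masses on optimal trajectories, proves a tightness estimate (via the moment bounds and the comparison $c_p(x,y)\le C_p|y-\Phi(0,T)x|^p$), invokes Prokhorov's theorem, and then uses the closedness of $\Gamma_p$ together with the Portmanteau theorem to show that the narrow limit lies in $\A_p(\mu,\nu)$. You instead push forward an arbitrary coupling $\pi\in\Pi(\mu,\nu)$ under the selection map $\Psi(x,y)=\gamma^{0,x}_{\alpha^*_p(\cdot;x,y)}$, and the only thing to verify is the continuity of $\Psi$, which you obtain from the continuity of $\alpha^*_p$ (Theorem \ref{thm:continuity_of_xi}), the uniform bound through \eqref{alpha^*_p formula implicit v2} and \eqref{eq:upper_bound_on_xi_with_c_p}, the continuity of $c_p$, and dominated convergence; the three defining properties of $\A_p(\mu,\nu)$ then follow mechanically from $e_0\circ\Psi=\pi^x$, $e_T\circ\Psi=\pi^y$ and $\Psi(\R^d\times\R^d)\subseteq\Gamma_p$. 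This is sound, and in fact your $\Psi$ is exactly the inverse $\e_{0,T}\vert_{\Gamma_p}^{-1}$ whose continuity the paper establishes later (by essentially the same dominated-convergence estimate) in the proof of Lemma \ref{lem:connection_between_optimisation_sets}; so your route anticipates and would also shortcut the surjectivity half of that lemma, and, applied to an optimal $\pi^\ast$, it directly produces the measure $\eta^\ast$ used in Lemma \ref{lem:half_of_BB_easy_side}. What your approach buys is economy: no empirical approximation, no tightness/Prokhorov/Portmanteau machinery, and no use of the closedness of $\Gamma_p$. What the paper's compactness route buys is robustness: it would survive in settings where the optimal control is not unique or not continuous in $(x,y)$ (so that no continuous selection $\Psi$ exists), and its tightness estimates are recycled in the proof of Lemma \ref{lem:connection_between_optimisation_sets}. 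One small presentational point: since the curves in $\Gamma_p$ are uniformly controlled on bounded endpoint sets, your dominated-convergence step could equally be phrased with the paper's generalised dominated convergence (Lemma \ref{generalised dct lemma}) using $c_p(x_n,y_n)$ as the dominating sequence, but your uniform constant bound via $\xi_p$ works just as well.
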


\begin{proof}
	Due to the density of empirical measure in $\PP_p\pa{\R^d}$ with respect to the $p-$Wasserstein distance (see, for instance, the discussion in  \cite[Subsection 5.1.2]{CDF2018} with the requirement of a finite second moment in the law Law of Large Numbers replaced by the $p^{th}-$moment version\footnote{This can be shown by using Etemadi's theorem which can be found \cite[Theorem 5.17]{Klenke2020}}) we can find sequences $\pa{x_m}_{m\in\N},\pa{y_m}_{m\in\N}$ in $\R^d$ such that 
	$$\lim_{N\to \infty}W_p(\mu_N,\mu)+W_p(\nu_N,\nu)=0 $$
	where
	$$\mu_N=\frac{1}{N}\sum_{i=1}^N\delta_{x_i},\qquad\nu_N=\frac{1}{N}\sum_{i=1}^N\delta_{y_i}.$$

	We consider the finite set 
	$$\Gamma_p^N = \br{\gamma^{0,x_i}_{\alpha^\ast_p\pa{\cdot;x_i,y_j}}}_{i,j=1,\dots,N}\subset \Gamma_p.$$
	and associate to it the measure $\eta_N\in \PP\pa{\X}$
	$$\eta_N = \frac{1}{N^2}\sum_{i,j=1}^{N}\delta_{\gamma^{0,x_i}_{\alpha^\ast_p\pa{\cdot;x_i,y_j}}}. $$

	We claim that $\pa{\eta_N}_{N\in\N}$ is a tight sequence in $\PP\pa{\X}$. To show this we identify suitable compact sets in $\X$. For a given $R>0$ we define the set 
	$$E_R := \left\{\gamma\in\X\;|\; \abs{\gamma(0)}+\abs{\gamma(T)} \leq R\right\} \cap \Gamma_p. $$
 We find that $E_R=\e_{0,T}\vert_{\Gamma_p}^{-1}(B_R)$ where
    $$B_R:=\{(x,y)\in\R^d\times\R^d: |x|+|y|\leq R\},$$
    and where $\e_{0,T}\vert_{\Gamma_p}$ was defined in Lemma \ref{e_0,T is bijective on Gamma_p}. Since $B_R$ is compact and $\e_{0,T}\vert_{\Gamma_p}$ is a homeomorphism, we conclude that $E_R$ is compact in $\Gamma_p$.
	
	To show the tightness of $\pa{\eta_N}_{N\in\N}$ we need to use the connection between this sequence and $\pa{\mu_N}_{N\in\N}$ and $\pa{\nu_N}_{N\in\N}$. As
	$${e_t}_{\sharp}\eta_N = \frac{1}{N^2}\sum_{i,j=1}^{N}\delta_{\gamma^{0,x_i}_{\alpha^\ast_p\pa{\cdot;x_i,y_j}}(t)}$$
	we see that 
	\begin{equation}\label{eq:end_points_of_paths}
		\begin{split}
			&{e_0}_{\sharp}\eta_N =\frac{1}{N^2}\sum_{i,j=1}^{N}\delta_{x_i} = \mu_N,\\
			 &{e_T}_{\sharp}\eta_N =\frac{1}{N^2}\sum_{i,j=1}^{N}\delta_{y_j} = \nu_N.
		\end{split}
	\end{equation}
	In addition, 
	since for any $\mu_1,\mu_2\in \PP_p\pa{\R^d}$
	$$\int_{\R^d}\abs{x}^p \dd\mu_1 =\int_{\R^d\times\R^d}\abs{x}^p\dd\pi\pa{x,y} \leq 2^{p-1}\int_{\R^d\times\R^d}\abs{x-y}^p\dd\pi\pa{x,y} + 2^{p-1}\int_{\R^d}\abs{y}^p \dd\mu_2(y), $$
	where $\pi\in\Pi\pa{\mu,\nu}$, we conclude that 
	\begin{equation}\nonumber
		\begin{split}
			\sup_{N} &\max\br{\int_{\R^d}\abs{x}^p \dd\mu_N(x),\int_{\R^d}\abs{x}^p \dd\nu_N(x)} \\
			&\leq \sup_{N}\pa{W_p^p(\mu_N,\mu)+W_p^p(\nu_N,\nu)}+\int_{\R^d}\abs{x}^p \dd\mu(x)+\int_{\R^d}\abs{x}^p \dd\nu(x):=\mathcal{R}_p<\infty
		\end{split}	
		\end{equation}
	Utilising these two observations we find that for all $N\in\N$
\begin{align*}
	R\eta_N\pa{\X\setminus E_R} &\leq \int_{\X\setminus E_R}\pa{\abs{\gamma(0)}+\abs{\gamma(T)}}\dd\eta_N(\gamma)\leq  \int_{\X }\pa{\abs{\gamma(0)}+\abs{\gamma(T)}}\dd\eta_N(\gamma)\\
	&=\int_{\X }\pa{\abs{e_0(\gamma)}+\abs{e_T\pa{\gamma}}}\dd\eta_N(\gamma)= \int_{\R^d} \abs{x} \dd{e_0}_{\sharp}\eta_N(x) + \int_{\R^d} \abs{x} \dd{e_T}_{\sharp}\eta_N(x)\\ 
	&=\int_{\R^d}\abs{x}\dd\mu_N(x)+\int_{\R^d}\abs{x}\dd\nu_N(x) \leq 2\mathcal{R}_p^{\frac{1}{p}},
\end{align*}	
	from which we sees that 
	\begin{equation}\label{eq:tightness}
		\sup_{N\in\N}\eta_N\pa{\X\setminus E_R} \leq \frac{2\mathcal{R}_p^{\frac{1}{p}}}{R}.
	\end{equation}
	As $R>0$ was arbitrary and $E_R$ are compact we conclude the desired tightness of $\pa{\eta_N}_{N\in\N}$.
	
	With the tightness of $\pa{\eta_N}_{N\in\N}$ established we invoke Prokhorov's theorem and find a subsequence of $\pa{\eta_N}_{N\in\N}$, $\pa{\eta_{N_k}}_{k\in\N}$, that converges narrowly to some $\eta\in \PP\pa{\X}$. It remains to show that $\eta\in \A_p\pa{\mu,\nu}$ to conclude the proof.

	 Using the fact that $ \Gamma_p$ is closed and the Portemanteau theorem (see, for instance, \cite[Theorem 13.16]{Klenke2020}) we find that 
	$$1\geq \eta\pa{\Gamma_p} \geq  \limsup_{k\to\infty}\eta_{N_k}\pa{\Gamma_p}=1,$$
	where we have used the fact that $\eta_N$ is supported in $\Gamma_p$ for all $N\in\N$. We conclude that
	$$\eta\pa{\Gamma_p}=1,$$
	i.e., $\eta$ is concentrated on $\Gamma_p$. 
	
	Lastly, the continuity of the map $e_t$ for any $t\in[0,T]$ and the fact that $\pa{\eta_{N_k}}_{k\in\N}$ converges narrowly to $\eta$ implies that 
	$${e_0}_{\sharp}\eta = \lim_{k\to\infty}{e_0}_{\sharp}\eta_{N_k} = \lim_{k\to\infty}\mu_{N_k}=\mu,$$
	$${e_T}_{\sharp}\eta = \lim_{k\to\infty}{e_T}_{\sharp}\eta_{N_k} = \lim_{k\to\infty}\nu_{N_k}=\nu,$$
	where we have used \eqref{eq:end_points_of_paths} and the narrow convergence of $\pa{\mu_N}_{N\in\N}$ and $\pa{\nu_N}_{N\in\N}$ to $\mu$ and $\nu$ respectively. In other words
	$$\eta\pa{\Gamma_p}=1,\quad {e_0}_{\sharp}\eta =\mu, \quad {e_T}_{\sharp}\eta =\nu$$
	showing that $\eta\in \A_p\pa{\mu,\nu}$ which is consequently not empty. 
\end{proof}

\amit{\begin{remark}\label{rem:A_p_is_more}
	The set $A_p\pa{\mu,\nu}$ is not only empty -- it is, in fact, homeomorphic to $\Pi\pa{\mu,\nu}$. This will play an important role our study of the generalised Benamou--Brenier type formula, which is the subject of the next section (see Lemma \ref{lem:connection_between_optimisation_sets}).
	\end{remark}
}
The last ingredient we will need to show Theorem \ref{thm:not_empty} is the notion of disintegration of measures. In particular we will use the following theorem, which can be found in \cite[Theorem 5.3.1]{AGS08}.
 \begin{theorem}
 \label{thm:disintegration}
Let $X$, $Y$ be Radon separable metric spaces and let $\mu\in\ \PP\pa{X}$ be given. Let $T:X\to Y$ be a Borel map and define $\nu=T_{\sharp}\mu\in \PP\pa{Y}$. Then there exists a $\nu-$a.e. uniquely determined Borel family of probability measures $\pa{\mu_y}_{y\in Y}\subset \PP\pa{X}$, i.e., $y\mapsto \mu_{y}(B)$ is a Borel map for any Borel set $B$ in $X$, such that 
$$\mu_y\pa{X\setminus T^{-1}(y)}=0,\quad \text{ for }\nu\text{-a.e. }y\in Y$$
and 
\begin{equation}\label{eq:disintegration_integral_consequence}
	\int_{X}f(x)\dd\mu(x) = \int_{Y}\pa{\int_{T^{-1}(y)}f(x)\dd\mu_y(x)}\dd\nu(y)
\end{equation}
for every Borel map $f:X\to [0,+\infty]$. We sometimes use the notation 
$$\dd\mu(x) = \dd\mu_y(x) \dd (T_{\sharp}\mu) (y)$$
to express \eqref{eq:disintegration_integral_consequence}.
\end{theorem}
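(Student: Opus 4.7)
The plan is a classical Radon--Nikodym construction on a countable generating algebra, taking advantage of the separability of $X$ to control the exceptional $\nu$-null sets. The key point is that the candidate fiber measures $\mu_y$ must be defined simultaneously for a $\nu$-conegligible set of $y$'s, which forces us to reduce every compatibility condition to countable families.

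First I would fix a countable algebra $\mathcal{A}$ of Borel subsets of $X$ generating the Borel $\sigma$-algebra; this exists because $X$ is separable metric. For each $A\in\mathcal{A}$, the positive measure $\sigma_A(E):=\mu(A\cap T^{-1}(E))$ on Borel sets $E\subset Y$ satisfies $\sigma_A\leq\nu$ and hence $\sigma_A\ll\nu$, so the Radon--Nikodym theorem produces a density $f_A\in L^1(Y,\nu)$ with values in $[0,\mu(X)]$. The tentative definition is $\mu_y(A):=f_A(y)$. Since $\mathcal{A}$ is countable, only countably many algebraic identities (additivity on disjoint pairs, monotone continuity along sequences $A_n\downarrow\emptyset$ in $\mathcal{A}$) need to hold simultaneously in $y$; each such identity is equivalent to equality of integrals against arbitrary Borel subsets of $Y$, hence holds pointwise outside a single $\nu$-null set $N$. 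For $y\notin N$ this produces a $\sigma$-additive pre-measure on $\mathcal{A}$, which extends uniquely by Carath\'eodory to a Borel measure $\mu_y$ on $X$; set $\mu_y:=\delta_{x_0}$ for $y\in N$. A standard monotone class argument then promotes the integration identity \eqref{eq:disintegration_integral_consequence} from indicators $\mathbf{1}_A$, $A\in\mathcal{A}$, to all non-negative Borel $f$, and $\nu$-a.e.\ uniqueness of $(\mu_y)_{y\in Y}$ follows by testing the identity against $\mathbf{1}_{A\cap T^{-1}(E)}$ for $A\in\mathcal{A}$ and $E$ Borel in $Y$.

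For concentration on $T^{-1}(y)$ I would use a countable base $(V_n)_{n\in\N}$ for the topology of $Y$. The identity
\begin{equation*}
\int_{V_n}\mu_y\bigl(X\setminus T^{-1}(V_n)\bigr)\,\dd\nu(y)=\mu\bigl(T^{-1}(V_n)\cap (X\setminus T^{-1}(V_n))\bigr)=0
\end{equation*}
forces $\mu_y(X\setminus T^{-1}(V_n))=0$ for $\nu$-a.e.\ $y\in V_n$; taking a countable union of exceptional sets, for $\nu$-a.e.\ $y$ one has $\mu_y(X\setminus T^{-1}(V_n))=0$ whenever $y\in V_n$. Since $Y$ is Hausdorff, $X\setminus T^{-1}(\{y\})=\bigcup_{n:\, y\in V_n}(X\setminus T^{-1}(V_n))$, and countable subadditivity yields $\mu_y(X\setminus T^{-1}(\{y\}))=0$. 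The main obstacle I anticipate is the subtle null-set bookkeeping in the construction step: each $f_A$ is only defined $\nu$-a.e.\ and one must carefully select versions so that the countable collection of algebraic relations in $\mathcal{A}$ holds on a common $\nu$-conegligible set; the separability of $X$ (ensuring $\mathcal{A}$ can be taken countable) and the Radon property of $\mu$ (implicitly used when passing from $\mathcal{A}$ to arbitrary Borel sets via inner regularity) are both essential to make the argument go through.
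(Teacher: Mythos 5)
This theorem is not proved in the paper at all: it is quoted verbatim as a known tool from \cite[Theorem 5.3.1]{AGS08}, so your attempt can only be measured against the standard proof there, which goes through exactly the Radon--Nikodym-on-a-countable-algebra strategy you outline, but with one essential extra ingredient that your sketch is missing.

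The gap is in the step where you claim that, besides finite additivity, ``monotone continuity along sequences $A_n\downarrow\emptyset$ in $\mathcal{A}$'' constitutes only countably many conditions, so that a single $\nu$-null set can be discarded and Carath\'eodory extension applied for a.e.\ $y$. Finite additivity on disjoint pairs is indeed a countable family of identities, but the collection of decreasing sequences in a countable algebra has the cardinality of the continuum, so you cannot remove one null set per sequence; and for a \emph{fixed} $y$ nothing in the construction forces the finitely additive set function $A\mapsto f_A(y)$ to be $\sigma$-additive on $\mathcal{A}$. This is not a technicality that a cleverer bookkeeping of versions can repair: the disintegration theorem is false for general Borel probability measures on arbitrary separable metric spaces, so any correct proof must use the Radon (tightness) hypothesis precisely at this point, not ``implicitly\ldots when passing from $\mathcal{A}$ to arbitrary Borel sets via inner regularity'' as you suggest (the Carath\'eodory extension, once it exists, is automatically determined on $\sigma(\mathcal{A})$; inner regularity of the extension is not the issue). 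The standard repair is: by tightness choose compact sets $K_m\subset X$ with $\mu(X\setminus K_m)<2^{-m}$, enlarge $\mathcal{A}$ to a countable algebra containing suitable compact (or compactly-approximable) sets so that each $A\in\mathcal{A}$ admits inner approximations from a countable compact class; the inequalities expressing this approximation in the fibres, e.g.\ $\sup_m f_{A\cap K_m'}(y)=f_A(y)$ for the countably many pairs involved, are countably many $L^1(\nu)$ identities and hence hold off a single null set; then the compact-class criterion (a finitely additive set function on an algebra that is inner regular with respect to a compact class is $\sigma$-additive) yields a genuine premeasure for $\nu$-a.e.\ $y$, and only then does your extension-plus-monotone-class argument go through. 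Your second paragraph (concentration on the fibres via a countable base of $Y$) and the uniqueness argument are correct and standard, granted the construction step.
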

 As all the spaces we consider, including $\X$, are Polish spaces we will be able to use the disintegration theorem. 

\medskip

Before we \amit{begin} with its proof, let us sketch the main steps we will take to show Theorem \ref{thm:not_empty}. 

The main idea we will use is to create a path of measures that `evolves' on the controlled ODEs \eqref{eq:controlled}. Consequently, a candidate for the family of probability measures $\pa{\rho_t}_{t\in[0,T]}$ such that $\rho_0=\mu$ and $\rho_T=\nu$ will be given by 
$$\rho_t = {e_t}_{\sharp} \eta,$$
where $\eta \in \mathcal{A}_p\pa{\mu,\nu}$. 
 
Motivated by the need for the pair $\pa{\rho,u}$ to solve \eqref{eq:continuity_extended_explicit} in the sense of distributions we see that, at least formally, for any $\phi \in C_c^1\pa{\R^d}$
\begin{align*}\nonumber 
		\frac{\dd}{\dd t}\int_{\R^d} \phi\pa{x}\dd\rho_t(x) &= \frac{\dd}{\dd t}\int_{\Gamma_p}\phi\pa{e_t\pa{\gamma}}\dd\eta\pa{\gamma} = \int_{\Gamma_p}\nabla \phi \pa{\gamma(t)}^\top \gamma'(t)\dd\eta(\gamma)\\
		&=\int_{\Gamma_p}\nabla \phi \pa{\gamma(t)}^\top \pa{M(t)\gamma(t)+N(t)\alpha^\ast_p \pa{t;\gamma(0),\gamma(T)}}\dd\eta(\gamma)\\
		& = \int_{\R^d}\nabla \phi \pa{x}^\top M(t)x\dd\rho_t(x)
		+\int_{\Gamma_p}\nabla \phi\pa{\gamma(t)}^\top N(t)\alpha^\ast_p \pa{t;\gamma(0),\gamma(T)}\dd\eta(\gamma).
\end{align*}

As we want to go back to $\dd\rho_t$ in the last expression above we will need to use the full connection between $\rho_t$ and $\eta(\gamma)$ which is given by the disintegration of $\eta$:
$$\dd\eta(\gamma) = \dd\eta_{t,x}\pa{\gamma}\dd {e_t}_{\sharp}\eta (x) =  \dd\eta_{t,x}\pa{\gamma}\dd \rho_t (x) $$
which gives us (formally)
\begin{align}\nonumber
		\frac{\dd}{\dd t}\int_{\R^d} \phi\pa{x}\dd\rho_t(x) &=  \int_{\R^d}\nabla \phi \pa{x}^\top M(t)x\dd\rho_t(x)\\
		\label{eq:reason_for_u}&+\int_{\Gamma_p}\nabla\phi \pa{x}^\top N(t)\pa{\int_{\bm{e}_t^{-1}\br{x}}\alpha^\ast_p \pa{t;\gamma(0),\gamma(T)}d\eta_{t,x}\pa{\gamma}}\dd\rho_t(x)\\
		\nonumber&= \int_{\R^d}\nabla \phi \pa{x}^\top \pa{M(t)x + N(t)u(t,x)}\dd\rho_t(x),
\end{align}
with 
$$u(t,x) := \int_{\bm{e}_t^{-1}\br{x}}\alpha^\ast_p \pa{t;\gamma(0),\gamma(T)}\dd\eta_{t,x}\pa{\gamma}.$$
We will now make the above intuitive idea more rigorous. To be able to deal with measurability issues with $u$ we will need to consider a `lift' of our measure $\eta$ to include the time variable before we disintegrate.

\begin{proof}[Proof of Theorem \ref{thm:not_empty}]
	We start by noticing that if we show that $\cadm\pa{\mu,\nu}$ is non-empty we will immediately conclude that 
	$$\D_p\pa{\mu,\nu}=\inf_{(\rho,u)\in \cadm(\mu,\nu)}\int_0^T \int_{\mathbb{R}^d}|u(t,x)|^p \,\dd\rho_t(x)\dd t<\infty,$$
	so we can focus on that part alone.
	
	Given $\mu,\nu\in\PP_p\pa{\R^d}$ we choose $\eta\in \A_p\pa{\mu,\nu}$, which we now know to be non-empty, and define
	$$\rho(t,x)=\rho_t(x) := \pa{e_t}_{\sharp}\eta(x).$$
	
	Next, we consider the probability measure $\bm{\eta}\in\PP\pa{[0,T]\times \Gamma_p}$ defined by
	$$\bm{\eta}\pa{t,\gamma} = \frac{\dd t}{T}\otimes \eta\pa{\gamma},$$
	where $\dd t$ is the standard Lebesgue measure on $[0,T]$ with the Borel $\sigma-$algebra. Using the continuous map $\bm{e}:[0,T]\times \Gamma_p \to  [0,T]\times \R^d$
	defined by 
	$$\bm{e}(t,\gamma) = \pa{t,\gamma(t)} = \pa{t,e_t(\gamma)},$$
	and the disintegration theorem we find a $\bm{e}_{\sharp}\bm{\eta}-$a.e. unique Borel family of probability measures $\pa{\bm{\eta}_{t,x}}_{x\in \R^d}\subset \PP\pa{[0,T]\times \Gamma_p}$ such that 
	$$\dd\bm{\eta}(t,\gamma) = \dd\bm{\eta}_{s,x}(t,\gamma) \dd(\bm{e}_{\sharp}\bm{\eta})(s,x).$$
	Using this family we define
	$$u(t,x) := \int_{\bm{e}^{-1}\pa{t,x}} \alpha_p^\ast \pa{s;\gamma(0),\gamma(T)}\dd\bm{\eta}_{t,x}(s,\gamma)=\int_{\bm{e}^{-1}\pa{t,x}} \alpha_p^\ast \pa{s;{e_0}\pa{\gamma},{e_T}\pa{\gamma}}\dd\bm{\eta}_{t,x}(s,\gamma),$$
	where we note that as $\alpha_p^\ast$ is uniquely determined and is continuous in its variables according to Theorem \ref{thm:continuity_of_xi} and as $e_0$ and $e_T$ are continuous maps, the function $u(t,x)$ is well defined and is Borel measurable. 
	
	With the pair $\pa{\rho,u}$ defined, we move towards showing that $\rho_t\in \PP_p\pa{\R^d}$ for any $t\in [0,T]$ and is a narrowly continuous curve such that 
	$\displaystyle\int_0^T\left(\int_{\R^d}\abs{x}^p\dd\rho_t(x)\right)^{\frac{1}{p}}\dd t<\infty $ as well as showing that
	 $u\in L^p_tL^p_{\rho_t}([0,T]\times \mathbb{R}^d;\mathbb{R}^n)$. 
	
	 Starting with the former we find that for any $\phi\in C_b\pa{\R^d}$ and any sequence $\pa{t_n}_{n\in\N}$ that converges to $t\in[0,T]$
	 $$\int_{\R^d}\phi(x)\dd\rho_{t_n}(x) = \int_{\X}\phi\pa{\gamma(t_n)}\dd\eta(\gamma)\underset{n\to\infty}{\longrightarrow}\int_{\X}\phi\pa{\gamma(t)}\dd\eta(\gamma) =\int_{\R^d}\phi(x)\dd\rho_{t}(x), $$
	 where we have used the continuity of $\phi\circ \gamma$ for any $\gamma\in \X$ and the dominated convergence theory (since $\phi$ is bounded). In addition, 
		\begin{equation}\nonumber
		\begin{split}
			&\int_{\R^d}\abs{x}^p \dd\rho_t(x)=\int_{\R^d}\abs{x}^p \dd{e_t}_{\sharp}\eta(x) = \int_{\Gamma_p}\abs{\gamma(t)}^p\dd\eta(\gamma).
		\end{split}
	\end{equation}
	As any $\gamma\in \Gamma_p$ satisfies
	\begin{align*}
	\abs{\gamma(t)} &=\left|\Phi(0,t)\gamma(0) + \int_{0}^t \Phi(\tau,t)N(\tau)\alpha_p^\ast\pa{\tau;\gamma(0),\gamma(T)}\dd\tau\right|\\
	&\leq  \norm{\Phi}_{L^\infty\pa{[0,T]\times[0,T]}}\pa{1+\norm{N}_{L^\infty\pa{[0,T]}}}\pa{\abs{\gamma(0)}+ \int_{0}^T \abs{\alpha_p^\ast\pa{\tau;\gamma(0),\gamma(T)}\dd\tau}},
	\end{align*}
	we see that
		\begin{align*}
			&\int_{\R^d}\abs{x}^p \dd\rho_t(x)\\
			& \leq 2^{p-1}\norm{\Phi}_{L^\infty\pa{[0,T]\times[0,T]}}^p\pa{1+\norm{N}_{L^\infty\pa{[0,T]}}}^p
			\int_{\Gamma_p}\pa{\abs{\gamma(0)}^p+ T^{\frac{p}{q}}\int_{0}^T \abs{\alpha_p^\ast\pa{\tau;\gamma(0),\gamma(T)}}^p\dd\tau}\dd\eta(\gamma) \\
			&= 2^{p-1}\norm{\Phi}_{L^\infty\pa{[0,T]\times[0,T]}}^p\pa{1+\norm{N}_{L^\infty\pa{[0,T]}}}^p\pa{1+T^{\frac{p}{q}}}\int_{\Gamma_p}\pa{\abs{\gamma(0)}^p+ c_p\pa{\gamma(0),\gamma(T)}}\dd\eta(\gamma). 
	\end{align*}
	Using \eqref{eq:explicit_upper_bound_for_c_p} from Remark \ref{rem:special_case_p=2} we find that
	\begin{equation} \label{eq:auxiliary_upper_bound}
		\begin{split}
			&c_p(x,y)  \leq C_p\abs{y-\Phi(0,T)x}^p 
			\leq 2^{p-1}C_p\pa{1+\norm{\Phi(0,T)}}\pa{\abs{x}^p+\abs{y}^p}
		\end{split}
	\end{equation}
	and consequently
	\begin{align*}
			\int_{\R^d}\abs{x}^p \dd\rho_t(x) &
			\leq 2^{p-1}\norm{\Phi}_{L^\infty\pa{[0,T]\times[0,T]}^p}\pa{1+\norm{N}_{L^\infty\pa{[0,T]}}}^p\pa{1+T^{\frac{p}{q}}}\\
			 &\times\pa{1+2^{p-1}C_p\pa{1+\norm{\Phi(0,T)}}}\int_{\Gamma_p}\pa{\abs{\gamma(0)}^p+ \abs{\gamma(T)}^p}\dd\eta(\gamma).
	\end{align*}
	Noticing that 
	\begin{equation}\label{eq:estimating_p_moments}
		\begin{split}
			&	\int_{\Gamma_p}\pa{\abs{\gamma(0)}^p+ \abs{\gamma(T)}^p}\dd\eta(\gamma) = \int_{\R^d}\abs{x}^p \dd\pa{e_0}_{\sharp}\eta(x)+\int_{\R^d}\abs{x}^p \dd{e_T}_{\sharp}\eta(x)\\
			&\qquad\qquad	= \int_{\R^d}\abs{x}^p \dd\mu(x) + \int_{\R^d}\abs{x}^p \dd\nu(x),
		\end{split}
	\end{equation}
	where we have used the fact that $\eta\in \A_p\pa{\mu,\nu}$, we conclude that 
	\begin{align} \label{eq:p_moment_of_rho_t_v1}
			\sup_{t\in [0,T]}\int_{\R^d}\abs{x}^p \dd\rho_t(x) &\leq 2^{p-1}\norm{\Phi}_{L^\infty\pa{[0,T]\times[0,T]}^p}\pa{1+\norm{N}_{L^\infty\pa{[0,T]}}}^p\pa{1+T^{\frac{p}{q}}}\\
			\nonumber & \times  \pa{1+2^{p-1}C_p\pa{1+\norm{\Phi(0,T)}}}\pa{\int_{\R^d}\abs{x}^p \dd\mu(x) + \int_{\R^d}\abs{x}^p \dd\nu(x)}<\infty,
	\end{align}
	showing the needed requirements on $\pa{\rho_t}_{t\in [0,T]}$.
	
	To estimate the $L^p_t L^p_{\rho_t}([0,T]\times \mathbb{R}^d)$ norm of $u$ we start by noticing that for any Borel function $\phi$ on $[0,T]$ and any bounded Borel function $\psi$ on $\R^d$ we have that 
\begin{align*}	
	 \int_{[0,T]\times \R^d}\phi(s)\psi(x) \dd(\bm{e}_{\sharp}\bm{\eta})(s,x)&= \int_{[0,T]\times \Gamma_p}\phi(s)\psi\pa{\gamma(s)} \dd\bm{\eta}(s,\gamma)\\
	&=\frac{1}{T}\int_{0}^T \phi(s) \pa{\int_{\Gamma_p}\psi\pa{\gamma(s)}\dd\eta(\gamma)}\dd s=\frac{1}{T}\int_{0}^T \phi(s) \pa{\int_{\R^d}\psi\pa{x}\dd\pa{e_s}_{\sharp}\eta(x)}\dd s\\
	&=\frac{1}{T}\int_{0}^T \pa{\int_{\R^d}\phi(s)\psi\pa{x}\dd{e_s}_{\sharp}\eta(x)}\dd s,
\end{align*}
	where the Borel measurability of the map $s\mapsto \int_{\R^d}\phi(s)\psi\pa{x}\dd{e_s}_{\sharp}\eta(x)$ is guaranteed by the Fubini--Tonelli theorem. As the subalgebra generated by functions of the form $\phi(s)\psi(x)$ with $\phi\in C\pa{[0,T]}$ and $\psi\in C\pa{B_n(0)}$ is dense in $C\pa{[0,T]\times B_n(0)}$ with respect to the supremum norm according to the Stone--\amit{Weierstrass} theorem, we conclude that for any $\Psi\in C\pa{[0,T]\times \R^n}$ we can find \amit{a sequence $\pa{k_m}_{m\in\N}\subset \N$ and a sequence of functions 
	$$\Psi_m(s,x):=\sum_{i=1}^{k_m}\phi_{m,i}(s)\psi_{m,i}(x)$$ 
	with $\phi_{m,i}\in C\pa{[0,T]}$ and $\psi_{m,i}\in C\pa{B_n(0)}$ for $i\in \br{1,\dots,k_m}$ such that
	\begin{equation}\label{eq:uniform_convergence_approximation}
		\sup_{\pa{s,x}\in[0,T]\times \R^d} \abs{\Psi_m(s,x) \xi_{n}(x) - \Psi(s,x)\xi_{n}(x)}\underset{m\to\infty}{\longrightarrow}0,
	\end{equation}
	where $\xi_n\in C\pa{\R^d}$ satisfies $0\leq \xi_n\leq 1$, $\xi_n \vert_{B_n(0)}=1$, and $\xi_n\vert_{B_{n+1}(0)^c}=0$.
	This implies that 
	$$\sup_{s\in [0,T]}\abs{\int_{\R^d}\Psi(s,x)\xi_n(x)\dd{e_s}_{\sharp}\eta(x) - \int_{\R^d}\Psi_m(s,x)\xi_n(x)\dd{e_s}_{\sharp}\eta(x)}\underset{m\to\infty}{\longrightarrow}0. $$
	Since 
	$$s\mapsto \int_{\R^d}\Psi_m(s,x)\xi_n(x)\dd{e_s}_{\sharp}\eta(x) = \sum_{i=1}^{k_m}\int_{\R^d}\phi_{m,i}(s)\psi_{m,i}(x)\xi_n(x)\dd{e_s}_{\sharp}\eta(x)$$
	
	is Borel measurable for any $m\in\N$, we conclude the Borel measurability of $s\mapsto \int_{\R^d}\Psi(s,x)\xi_n(x)\dd{e_s}_{\sharp}\eta(x)$ and that
\begin{align*}	
	&\frac{1}{T}\int_{0}^T \pa{\int_{\R^d}\Psi(s,x)\xi_n(x)\dd{e_s}_{\sharp}\eta(x)}\dd s 
    = \lim_{m\to\infty}\frac{1}{T}\int_{0}^T \pa{\int_{\R^d}\Psi_m(s,x)\xi_n(x)\dd{e_s}_{\sharp}\eta(x)}\dd s\\
	&=\lim_{m\to\infty}\sum_{i=1}^{k_m}\frac{1}{T}\int_{0}^T \int_{\R^d}\phi_{m,i}(s)\psi_{m,i}(x)\xi_n(x)\dd{e_s}_{\sharp}\eta(x)\dd s = \lim_{m\to\infty}\sum_{i=1}^{k_m}\int_{[0,T]\times \R^d}\phi_{m,i}(s)\psi_{m,i}(x)\xi_n(x)\dd(\bm{e}_{\sharp}\bm{\eta})(s,x)\\
	&=\lim_{m\to\infty}\int_{[0,T]\times \R^d}\Psi_m(s,x)\xi_n(x) \dd(\bm{e}_{\sharp}\bm{\eta})(s,x) =\int_{[0,T]\times \R^d}\Psi(s,t)\xi_n(x) \dd(\bm{e}_{\sharp}\bm{\eta})(s,x), 
\end{align*}
}
	where we have used \eqref{eq:uniform_convergence_approximation} again. Using the dominated convergence theorem we conclude that for any $\Psi\in C_b\pa{[0,T]\times \R^d}$ we have that 
	$$\int_{\R^d}\Psi(s,x)\dd{e_s}_{\sharp}\eta(x)=\lim_{n\to\infty}\int_{\R^d}\Psi(s,x)\xi_n(x)\dd{e_s}_{\sharp}\eta(x),$$
	which shows the Borel measurability of $[0,T]\ni s\mapsto \int_{\R^d}\Psi(s,x)\dd{e_s}_{\sharp}\eta(x)$ and
	\begin{equation}\label{eq:L_p_for_u_v1}
		\frac{1}{T}\int_{0}^T \pa{\int_{\R^d}\Psi(s,x)\dd{e_s}_{\sharp}\eta(x)}\dd s =\int_{[0,T]\times \R^d}\Psi(s,x) \dd(\bm{e}_{\sharp}\bm{\eta})(s,x).
	\end{equation} 
	The above is enough to show that $[0,T]\ni s\mapsto\int_{\R^d}\Psi(s,x)\dd{e_s}_{\sharp}\eta(x)$ is Borel measurable and that \eqref{eq:L_p_for_u_v1} holds for any non-negative measurable function $\Psi$. \amit{The measurability of the above function} is a bit more delicate than it seems for merely Borel function $\Psi$, and we postpone its proof to Appendix \S\ref{app:additional} (see Lemma \ref{applem:missing_measurability}).

	Utilising \eqref{eq:L_p_for_u_v1} we find that
	\begin{align}\label{eq:bound_on_L_p_of_u_wrt_c_p}
		\int_0^T\int_{\R^d} &\abs{u(t,x)}^p \dd\rho_t(x)\dd t = \int_0^T\int_{\R^d} \abs{u(t,x)}^p \dd{e_t}_{\sharp}\eta(x)\dd t =T\int_0^T\int_{\R^d} |u(t,x)|^p \dd\bm{e}_{\sharp}\bm{\eta}(t,x) \\
			\nonumber& \leq T\int_{0}^T\int_{\R^d}\pa{\int_{ \bm{e}^{-1}\pa{t,x}} \abs{\alpha_p^\ast \pa{s;\gamma(0),\gamma(T)}}^p\dd\bm{\eta}_{t,x}(s,\gamma)}\dd\bm{e}_{\sharp}\bm{\eta}(t,x) \\
			\nonumber &= T\int_{[0,T]\times \Gamma_p} \abs{\alpha_p^\ast \pa{s;\gamma(0),\gamma(T)}}^p \dd\bm{\eta}(s,\gamma) = \int_{\Gamma_p} \pa{\int_{0}^T \abs{\alpha_p^\ast \pa{s;\gamma(0),\gamma(T)}}^pds}\dd\eta(\gamma)\\
			\nonumber &=\int_{\Gamma_p}c_p\pa{\gamma(0),\gamma(T)}\dd\eta(\gamma)
	\end{align}

Using \eqref{eq:auxiliary_upper_bound} and the fact that $\eta\in \A_p\pa{\mu,\nu}$ we find that

$$\int_0^T\int_{\R^d} \abs{u(t,x)}^p \dd\rho_t(x)\dd t \leq 2^{p-1}C_p\pa{1+\norm{\Phi(0,T)}} \pa{\int_{\R^d} \abs{x}^p\dd\mu(x) +\int_{\R^d}\abs{x}^p\dd\nu(x)}<\infty,$$
and as such $u\in L^p_t L^p_{\rho_t}\pa{[0,T]\times \R^d}$ as claimed. 

To conclude our proof we are left with showing that the pair $\pa{\rho,u}$ solves our generalised continuity equation \eqref{eq:cnt_1} in the sense of distributions.

Given $\phi\in C_c^1\pa{(0,T)\times \R^d}$ we find that 
\begin{align*}
\int_{0}^T \int_{\R^d}\abs{N(t)u(t,x)\cdot \nabla_x\phi(t,x)}d\rho_t(x)\dd t
&\leq \norm{\phi}_{W^{1,\infty}\pa{[0,T]\times\R^d}}\norm{N}_{L^\infty\pa{[0,T]}}\int_{0}^T \int_{\R^d}\abs{u(t,x)}\dd\rho_t(x)\dd t\\
&\leq  \norm{\phi}_{W^{1,\infty}}\norm{N}_{L^\infty\pa{[0,T]}} \norm{u}_{L^1_{t} L^p_{\rho_t}\pa{[0,T]\times\R^d;\R^n}}<\infty.
\end{align*}
We conclude that the positive and negative parts of $ N(t)u(t,x)\cdot \nabla_x\phi(t,x)$ are $\rho_t(x)\dd t$ integrable and as such, by breaking the integration in two and recombining it back, we can use \eqref{eq:L_p_for_u_v1}. Consequently,
\begin{align*}\nonumber
	\int_{0}^T \int_{\R^d} N(t)u(t,x)&\cdot \nabla_x\phi(t,x)\dd\rho_t(x)\dd t  = T\int_{[0,T]\times \R^d} N(t)u(t,x)\cdot \nabla_x\phi(t,x)\dd(\bm{e}_{\sharp}\bm{\eta})(t,x)\\
	&=T\int_{[0,T]\times \Gamma_p} \int_{\bm{e}^{-1}(t,x)}N(t)\alpha_p^\ast\pa{s;\gamma(0),\gamma(T)}\cdot \nabla _x\phi\pa{t,x}d\bm{\eta}_{t,x}\pa{s,\gamma}d\bm{e}_{\sharp}\bm{\eta}(t,x)
\end{align*}
Since
$$\bm{e}^{-1}\pa{t,x}=\br{\pa{r,\gamma}\in [0,T]\times \Gamma_p\;|\; \pa{r,\gamma(r)}=\pa{t,x}} = \br{t}\times e_{t}^{-1}\pa{x}$$
we find that 
\begin{align}\label{eq:towards_continuity_eq_v1}
		\int_{0}^T \int_{\R^d} N(t)u(t,x)&\cdot \nabla_x\phi(t,x)\dd\rho_t(x)\dd t \\
		\nonumber&=T\int_{[0,T]\times \Gamma_p} \int_{\br{t}\times e_t^{-1}(x)}N(t)\alpha_p^\ast\pa{s;\gamma(0),\gamma(T)}\cdot \nabla _x\phi\pa{t,x}\dd\bm{\eta}_{t,x}\pa{s,\gamma}\dd\bm{e}_{\sharp}\bm{\eta}(t,x)\\
		\nonumber&=T\int_{[0,T]\times \Gamma_p} \int_{\br{t}\times e_t^{-1}(x)}N(s)\alpha_p^\ast\pa{s;\gamma(0),\gamma(T)}\cdot \nabla _x\phi\pa{s,\gamma(s)}\dd\bm{\eta}_{t,x}\pa{s,\gamma}\dd\bm{e}_{\sharp}\bm{\eta}(t,x)\\
		\nonumber&=T\int_{[0,T]\times \Gamma_p} \int_{\bm{e}^{-1}\pa{t,x}}N(s)\alpha_p^\ast\pa{s;\gamma(0),\gamma(T)}\cdot \nabla _x\phi\pa{s,\gamma(s)}\dd\bm{\eta}_{t,x}\pa{s,\gamma}\dd\bm{e}_{\sharp}\bm{\eta}(t,x)\\
		\nonumber&=T\int_{[0,T]\times \Gamma_p} N(s)\alpha_p^\ast\pa{s;\gamma(0),\gamma(T)}\cdot \nabla _x\phi\pa{s,\gamma(s)}\dd\bm{\eta}(s,\gamma),
\end{align}
where we have used the continuity of $N$, $\alpha_p^\ast$, and $\nabla_x \phi$.

Similarly, since 
\begin{align*}
\int_{0}^T \int_{\R^d}\abs{M(t)x\cdot \nabla_x\phi(t,x)}\dd\rho_t(x)\dd t & \leq \norm{\phi}_{W^{1,\infty}\pa{[0,T]\times\R^d}}\norm{M}_{L^\infty\pa{[0,T]}}\int_{0}^T \pa{\int_{\R^d}\abs{x}^p\dd\rho_t(x)}^{\frac{1}{p}}\dd t\\
&\leq \norm{\phi}_{W^{1,\infty}\pa{[0,T]\times\R^d}}\norm{M}_{L^\infty\pa{[0,T]}}T\sup_{t\in[0,T]} \pa{\int_{\R^d}\abs{x}^p\dd\rho_t(x)}^{\frac{1}{p}}<\infty,
\end{align*}
where we have used \eqref{eq:p_moment_of_rho_t_v1}, we see that we can apply \eqref{eq:L_p_for_u_v1} to find that 
\begin{align}\label{eq:towards_continuity_eq_v2}
		\int_{0}^T \int_{\R^d}M(t)x\cdot \nabla_x\phi(t,x)\dd\rho_t(x)\dd t  &=T \int_{[0,T]\times \R^d}M(t)x\cdot \nabla_x\phi(t,x)\dd\bm{e}_{\sharp}\bm{\eta}(t,x)\\
		\nonumber&=T \int_{[0,T]\times \R^d}M(t)\gamma(t)\cdot \nabla_x\phi(t,\gamma(t))\dd\bm{\eta}(t,\gamma).
\end{align}
Combining \eqref{eq:towards_continuity_eq_v1} and \eqref{eq:towards_continuity_eq_v2} we conclude that 
\begin{align*}\nonumber 
	\int_{0}^T \int_{\R^d} &\pa{M(t)x+N(t)u(t,x)}\cdot \nabla_x\phi(t,x)\dd\rho_t(x)\dd t\\ 
	&=T \int_{[0,T]\times \Gamma_p}\pa{ M(t)\gamma(t)+N(t)\alpha_p^{\ast}\pa{t;\gamma(0),\gamma(T)}}\cdot \nabla_x\phi(t,\gamma(t))\dd\bm{\eta}(t,\gamma)\\
	&\underset{\gamma\in \Gamma_p}{=}T \int_{[0,T]\times \Gamma_p}\gamma^\prime(t)\cdot \nabla_x\phi(t,\gamma(t))\dd\bm{\eta}(t,\gamma), 
\end{align*}
which implies that 
\begin{equation}\nonumber 
	\begin{split}
		&\int_{0}^T \int_{\R^d} \pa{\partial_t\phi\pa{t,x}+\pa{M(t)x+N(t)u(t,x)}\cdot \nabla_x\phi(t,x)}\dd\rho_t(x)\dd t \\
		&=T\int_{[0,T]\times \R^d} \partial_t\phi (t,x)\dd\bm{e}_{\sharp}\bm{\eta}(t,x) + T \int_{[0,T]\times \Gamma_p}\gamma^\prime(t)\cdot \nabla_x\phi(t,\gamma(t))\dd\bm{\eta}(t,\gamma) \\
		&=T \int_{[0,T]\times \Gamma_p}\pa{\partial_t\phi (t,\gamma(t))+\gamma^\prime(t)\cdot \nabla_x\phi(t,\gamma(t))}\dd\bm{\eta}(t,\gamma)\\
		&=\int_{\Gamma_p}\pa{\int_0^T \frac{\dd}{\dd t}\phi(t,\gamma(t)) \dd t}\dd\eta(\gamma) = \int_{\Gamma_p}\pa{\phi(T,\gamma(T)) - \phi(0,\gamma(0))}\dd\eta(\gamma)=0,
	\end{split}	
\end{equation}
since $\phi\in C_c^1\pa{(0,T)\times \R^d}$. The proof is thus complete. 
\end{proof}

\begin{remark}\label{rem:constructed_solution_for_generalised_continuity}
	Looking back at the proof of Theorem \ref{thm:not_empty} we notice that we have managed to show slightly more than just finding a pair $\pa{\rho,u}$ in $\cadm\pa{\mu,\nu}$. We showed that for any $\eta\in \A_p\pa{\mu,\nu}$ the pair
	\begin{equation}\label{eq:def_rho_t_and_u}
		\begin{split}
			&\rho_t = {e_t}_{\sharp}\eta,\\
			&u(t,x)= \int_{\bm{e}^{-1}\pa{t,x}} \alpha_p^\ast \pa{s;\gamma(0),\gamma(T)}\dd\bm{\eta}_{t,x}(s,\gamma),
		\end{split}
	\end{equation}
	where $\bm{\eta}_{t,x}(s,\gamma)$ is attained from the disintegration of $\bm{\eta}\pa{t,\gamma} = \frac{\dd t}{T}\otimes \eta(\gamma)$ 	with respect to the map $\bm{e}\pa{t,\gamma}=\pa{t,\gamma(t)}$, 
 solves the continuity equation in the sense of distribution and satisfies
	\begin{equation}\nonumber 
		\sup_{t\in[0,T]}\int_{\R^d}\abs{x}^p \dd\rho_t(x) \leq C_{\Phi,N,p}\pa{\int_{\R^d}\abs{x}^p\dd\mu(x)+\int_{\R^d}\abs{x}^p\dd\nu(x)},
	\end{equation}
	where $C_{\Phi,N,p}>0$ is a fixed constant that depends only on $\Phi$, $N$, and $p$, and
	\begin{equation}\label{eq:upper_bound_of_norm_of_u}
		\begin{split}
		& 	\int_0^T\int_{\R^d} \abs{u(t,x)}^p \dd\rho_t(x)\dd t \leq \int_{\Gamma_p}c_p\pa{\gamma(0),\gamma(T)}\dd\eta(\gamma).
	\end{split}
	\end{equation}
\end{remark}

We are now left with only one remaining goal - the proof of our generalised Benamou-Brenier formula, Theorem \ref{thm:general_bb}.

\section{A generalised Benamou--Brenier type formula}\label{sec:general_bb}

As was mentioned in \S\ref{sec:intro}, the Benamou--Brenier formula connects between a static description of an optimal transportation problem, captured by the cost function $c_p\pa{x,y}$, and a dynamic description associated to the continuity equation \eqref{eq:continuity_extended_explicit}. Each of these descriptions boasts an appropriate set on which we optimise -- $\Pi\pa{\mu,\nu}$ for the former and $\cadm\pa{\mu,\nu}$ for the latter. While seemingly unrelated, the proof of Theorem \ref{thm:not_empty} and Remark \ref{rem:constructed_solution_for_generalised_continuity} give us an inkling to an intimate connection between these sets which we will utilise in our proof of the generalised Benamou--Brenier formula. This intuition is captured in the following lemma:

\begin{lemma}\label{lem:connection_between_optimisation_sets}
	Recall the definition of $\mathcal{X}$ from \eqref{def:X} 
	and the definition of the map $\e_{0,T}:\X \to \R^d \times \R^d$ from \eqref{eq:def_of_mathfrak_e}
	and define the map $\EE_{0,T}:\PP\pa{\X}\to \PP\pa{\R^d\times \R^d}$ by
	$$\EE_{0,T}\pa{\eta} = {\e_{0,T}}_{\sharp}\eta.$$
	Then, for any $\mu,\nu\in \PP_p\pa{\R^d}$ the map $\EE_{0,T}$ is a bijection between $\A_p\pa{\mu,\nu}$ and $\Pi\pa{\mu,\nu}$.
\end{lemma}

\begin{proof}
	We start by noticing the following: for any $\eta\in\PP\pa{\X}$
	\begin{align}\label{eq:first_margina_of_E_0T}
			\EE_{0,T}\pa{\eta}\pa{A\times \R^d} &= \int_{\X} \chi_{A\times \R^d}\pa{\gamma(0),\gamma(T)}\dd\eta\pa{\gamma}\\
			\nonumber&=\int_{\X} \chi_{A }\pa{\gamma(0)}\dd\eta\pa{\gamma}=\int_{\R^d}\chi_A(x) \dd {e_0}_{\sharp}\eta(x) ={e_0}_{\sharp}\eta(A).
	\end{align}
	Similarly
	\begin{equation}\label{eq:second_margina_of_E_0T}
		\begin{split}
			&\EE_{0,T}\pa{\eta}\pa{ \R^d\times A} ={e_T}_{\sharp}\eta(A),
		\end{split}
	\end{equation}
	and as such we conclude that $\EE_{0,T}\pa{\eta}\in \Pi\pa{{e_0}_{\sharp}\eta,{e_T}_{\sharp}\eta}$. In particular, if $\eta\in \A_p\pa{\mu,\nu}$ then $\EE_{0,T}\pa{\eta}\in \Pi\pa{\mu,\nu}$.

	To show the surjectivity of $\EE_{0,T}$ we employ an approximation argument similar to that presented in the proof of Theorem \ref{thm:relaxed controllability}. Given $\pi\in \Pi\pa{\mu,\nu}$ we can find a sequence of points ${\pa{x_n,y_n}}_{n\in\N}$ in  $\R^d\times \R^d$ such that 
	$$W_p\pa{\pi_N,\pi}\underset{N\to\infty}{\longrightarrow}0,$$
	where $\pi_N= \frac{1}{N}\sum_{n=1}^N \delta_{\pa{x_n,y_n}}$. Defining 
	$$\eta_N = \frac{1}{N}\sum_{n=1}^N \delta_{\gamma^{0,x_n}_{\alpha^\ast_p\pa{\cdot;x_n,y_n}}},$$
	we find that $\eta_N$ is a probability measure that is concentrated on $\Gamma_p$ and 
	$$\EE_{0,T}\pa{\eta_N} = \frac{1}{N}\sum_{i=1}^N \EE_{0,T}\pa{\delta_{\gamma^{0,x_n}_{\alpha^\ast_p\pa{\cdot;x_n,y_n}}}} = \pi_N,$$
	where we have used the fact that for any Borel sets $A,B\subset \R^d$
	$$\EE_{0,T}\pa{\delta_{\gamma}}\pa{A\times B} = \int_{\Gamma_p} \chi_{A\times B}\pa{\xi(0),\xi(T)}\dd\delta_{\gamma}(\xi)=\chi_{A\times B}\pa{\gamma(0),\gamma(T)} = \delta_{\pa{\gamma(0),\gamma(T)}}\pa{A\times B},$$
	and the fact that $\gamma^{0,x_n}_{\alpha^\ast_p\pa{\cdot;x_n,y_n}}(0)=x_n$ and $\gamma^{0,x_n}_{\alpha^\ast_p\pa{\cdot;x_n,y_n}}(T)=y_n$.
	
	 The sequence $\pa{\eta_N}_{N\in\N}$  is a tight sequence in $\PP\pa{\Gamma_p}$. This can be shown by following a similar arguments to those presented in the proof of Theorem \ref{thm:relaxed controllability}, i.e. considering the compact sets $E_R=\e_{0,T}\vert_{\Gamma_p}^{-1}\pa{B_R}$ and showing that $\eta_N(\X\setminus E_R)$ goes to zero as $R$ goes to infinity uniformly in $N$.
	
    Invoking Prokhorov's theorem we can find a subsequence of $\pa{\eta_N}_{N\in\N}$, $\pa{\eta_{N_k}}_{k\in\N}$, that converges narrowly to some $\eta\in \PP\pa{\X}$. Much like in the proof of Theorem \ref{thm:relaxed controllability}, the fact that $\eta_N$ is concentrated on $\Gamma_p$ for any $n\in\N$ and the fact that $\Gamma_p$ is closed implies (by use of the Portmanteau theorem) that $\eta$ is also concentrated on $\Gamma_p$. Moreover, for any $\phi\in C_b\pa{\R^d\times\R^d}$
\begin{align*}	
	\int_{\R^d\times \R^d} \phi(x,y) \dd\EE_{0,T}(\eta) \pa{x,y} &= \lim_{k\to\infty}\int_{\X}\phi\pa{\e_{0,T}\pa{\gamma}}\dd\eta_{N_k}\pa{\gamma}\\
	& = \lim_{k\to\infty}\int_{\R^d\times\R^d}\phi(x,y)\dd\pi_{N_k}(x,y)=\int_{\R^d\times\R^d}\phi(x,y)\dd\pi(x,y).
\end{align*}
	As $\phi$ was arbitrary and $\R^d\times\R^d$ is Polish, we conclude that $\EE_{0,T}(\eta)=\pi$ which shows the desired surjectivity.
	
	Next we consider the injectivity of $\EE_{0,T}$. We know from Lemma \ref{e_0,T is bijective on Gamma_p} that $\e_{0,T}\vert_{\Gamma_p}^{-1}$ is continuous, and as such Borel measurable. Consequently, for any $\eta\in \A_p\pa{\mu,\nu}$ we have that $\pa{\e_{0,T}\vert_{\Gamma_p}^{-1}}_{\sharp}\EE_{0,T}(\eta)$ is a probability measure on $\Gamma_p$. Moreover, for any Borel set $A\subset \X$ we have that 
\begin{align*}	
	\pa{\e_{0,T}\vert_{\Gamma_p}^{-1}}_{\sharp}\EE_{0,T}(\eta)(A\cap \Gamma_p)&=\EE_{0,T}(\eta)(\e_{0,T}\vert_{\Gamma_p}\pa{A\cap \Gamma_p})\\
	&=\eta\pa{\e_{0,T}^{-1}\pa{\e_{0,T}\vert_{\Gamma_p}\pa{A\cap \Gamma_p}}}\\
	&=\eta\pa{\e_{0,T}^{-1}\pa{\e_{0,T}\vert_{\Gamma_p}\pa{A\cap \Gamma_p}}\cap \Gamma_p}\\
	&=\eta\pa{\e_{0,T}\vert_{\Gamma_p}^{-1}\pa{\e_{0,T}\vert_{\Gamma_p}\pa{A\cap \Gamma_p}}}=\eta\pa{A\cap \Gamma_p}=\eta(A),
\end{align*}
	where we have used the facts that $\e_{0,T}\vert_{\Gamma_p}$ is a homeomorphism and $\eta$ is concentrated on $\Gamma_p$. 
	We conclude that if $\EE_{0,T}(\eta_1)=\EE_{0,T}(\eta_2)$ for $\eta_1,\eta_2\in \A_p\pa{\mu,\nu}$ then for any Borel set $A\subset \X$
	$$\eta_1(A) = \pa{\e_{0,T}\vert_{\Gamma_p}^{-1}}_{\sharp}\EE_{0,T}(\eta_1)(A\cap \Gamma_p)=\pa{\e_{0,T}\vert_{\Gamma_p}^{-1}}_{\sharp}\EE_{0,T}(\eta_2)(A\cap \Gamma_p)=\eta_2(A),$$
	from which we conclude the injectivity, and as such the bijectivity, of $\EE_{0,T}$ from $\A_p\pa{\mu,\nu}$ to $\Pi\pa{\mu,\nu}$.
\end{proof}

\amit{\begin{remark}\label{rem:simplified_connection}
	The keen eyed reader would notice that we could have used the homeomorphism $\mathfrak{e}_{0,T}|_{\Gamma_p}$ not only show the injectivity of $\EE_{0,T}$ but also its surjectivity. Nonetheless, we have elected to provide to proof above to illustrate our ability to attain an explicit approximating sequence from which we can extract the pre-image of a given $\pi\in \Pi\pa{\mu,\nu}$.
	\end{remark}
}

With that at hand we  show the first part of our main result. 
\begin{lemma}\label{lem:half_of_BB_easy_side}
	Let $\mu,\nu\in\PP_p\pa{\R^d}$ and let $\D_{p}(\mu,\nu)$ and $\CC_{p}(\mu,\nu)$ be given in \eqref{controlled p action eq} and \eqref{intro:static}, respectively. Then there exists $\pi^\ast\in \Pi\pa{\mu,\nu}$ and $\pa{\rho^\ast,u^\ast}\in \cadm\pa{\mu,\nu}$  such that 
	\begin{equation}\label{eq:BB_easy_direction_plus_max}
		\D_p\pa{\mu,\nu} \leq \int_{0}^T \pa{\int_{\R^d}\abs{u^\ast(x,t)}^p\dd\rho^{\ast}_t(x)}\dd t \leq \int_{\R^d\times\R^d}c_p(x,y)\dd\pi^\ast(x,y) =  \CC_p\pa{\mu,\nu}.
	\end{equation}
\end{lemma}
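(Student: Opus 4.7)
The plan is to assemble this inequality chain from three ingredients already in hand: existence of a minimiser for the static problem, the bijection $\EE_{0,T}:\A_p(\mu,\nu)\to\Pi(\mu,\nu)$ from Lemma \ref{lem:connection_between_optimisation_sets}, and the explicit construction plus bound from Remark \ref{rem:constructed_solution_for_generalised_continuity}.

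First I would establish the existence of an optimiser $\pi^*\in\Pi(\mu,\nu)$ for the static problem by the direct method. Since $c_p$ is continuous (hence lower semicontinuous) and non-negative by Corollary \ref{cor:upper_and_lower_bounds_on_c_p}, and since the upper bound $c_p(x,y)\le K_2|y-\Phi(0,T)x|^p$ together with $\mu,\nu\in\cP_p(\R^d)$ shows that, e.g., $\pi=\mu\otimes\nu$ yields a finite value of the cost functional, we get $\CC_p(\mu,\nu)<\infty$. Tightness of $\Pi(\mu,\nu)$ combined with lower semicontinuity of $\pi\mapsto\int c_p\dd\pi$ under narrow convergence then yields a minimiser $\pi^*$, producing the rightmost equality in \eqref{eq:BB_easy_direction_plus_max}.

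Next, using the surjectivity part of Lemma \ref{lem:connection_between_optimisation_sets}, I would lift $\pi^*$ to some $\eta^*\in\A_p(\mu,\nu)$ satisfying $\EE_{0,T}(\eta^*)=\pi^*$, and define $(\rho^*,u^*)$ by the recipe \eqref{eq:def_rho_t_and_u}, namely $\rho_t^*:=(e_t)_\sharp\eta^*$ together with
$$u^*(t,x):=\int_{\bm{e}^{-1}(t,x)}\alpha_p^\ast(s;\gamma(0),\gamma(T))\dd\bm{\eta}^*_{t,x}(s,\gamma),$$
where $\bm{\eta}^*$ is constructed from $\eta^*$ as in Remark \ref{rem:constructed_solution_for_generalised_continuity}. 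By that remark, $(\rho^*,u^*)\in\cadm(\mu,\nu)$, which immediately gives the leftmost inequality $\D_p(\mu,\nu)\le\int_0^T\int_{\R^d}|u^*|^p\dd\rho_t^*\dd t$.

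The middle inequality is the core of the lemma and follows by combining the key estimate \eqref{eq:upper_bound_of_norm_of_u} from Remark \ref{rem:constructed_solution_for_generalised_continuity} with a pushforward change of variables along $\e_{0,T}:\Gamma_p\to\R^d\times\R^d$:
$$\int_0^T\int_{\R^d}|u^*(t,x)|^p\dd\rho_t^*(x)\dd t\le\int_{\Gamma_p}c_p(\gamma(0),\gamma(T))\dd\eta^*(\gamma)=\int_{\R^d\times\R^d}c_p(x,y)\dd(\EE_{0,T}\eta^*)(x,y)=\int_{\R^d\times\R^d}c_p(x,y)\dd\pi^*(x,y).$$
The change-of-variables step is legitimate since $c_p$ is continuous and non-negative, so the standard pushforward identity applies to the Borel map $\e_{0,T}$. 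The only substantive work is verifying the existence of $\pi^*$ and invoking the bijection correctly; everything else is bookkeeping. I do not expect a genuine obstacle here, as all technical difficulties (continuity of $c_p$, surjectivity of $\EE_{0,T}$, solvability of the continuity equation by the pushforward construction, and the bound \eqref{eq:upper_bound_of_norm_of_u}) have already been dealt with in earlier sections. The reverse direction $\CC_p\le\D_p$ would be the harder half of the Benamou--Brenier equivalence, but that is not what this lemma asks for.
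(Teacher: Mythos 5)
Your proposal is correct and follows essentially the same route as the paper: existence of a static minimiser $\pi^\ast$ by the direct method using the continuity of $c_p$ and compactness of $\Pi(\mu,\nu)$, lifting $\pi^\ast$ to $\eta^\ast\in\A_p(\mu,\nu)$ via Lemma \ref{lem:connection_between_optimisation_sets}, constructing $(\rho^\ast,u^\ast)$ as in Remark \ref{rem:constructed_solution_for_generalised_continuity}, and concluding with the bound \eqref{eq:upper_bound_of_norm_of_u} followed by the pushforward identity along $\e_{0,T}$. No gaps; the extra details you supply (finiteness of $\CC_p$ via $\mu\otimes\nu$, lower semicontinuity under narrow convergence) are just a more explicit rendering of the paper's appeal to the direct method.
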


\begin{proof}
	By Corollary \ref{cor:upper_and_lower_bounds_on_c_p} we know that the cost function $(x,y)\mapsto c_p(x,y)$ is continuous. Consequently, 
	$$\CC_p\pa{\mu,\nu}=\inf_{\pi\in \Pi\pa{\mu,\nu}}\int_{\R^d\times\R^d}c_p(x,y)\dd\pi(x,y)$$
	has a minimiser $\pi^\ast\in \Pi\pa{\mu,\nu}$ by the direct method of calculus of variations (since $\Pi(\mu,\nu)$ is compact). Using Lemma \ref{lem:connection_between_optimisation_sets} we can find $\eta\in\A_p\pa{\mu,\nu}$ such that $\EE_{0,T}(\eta^\ast)=\pi^\ast$. Following the proof and notations of Theorem \ref{thm:not_empty} and Remark \ref{rem:constructed_solution_for_generalised_continuity} we define
		\begin{align*}
			\rho^\ast_t(x) &= {e_t}_{\sharp}\eta^\ast,\\
			u^\ast(t,x) & = \int_{\bm{e}^{-1}\pa{t,x}} \alpha_p^\ast \pa{s;\gamma(0),\gamma(T)}\dd\bm{\eta^\ast}_{t,x}(s,\gamma),
	\end{align*}
	and using \eqref{eq:upper_bound_of_norm_of_u} we find that
\begin{align*}	
	\D_p\pa{\mu,\nu} &\leq \int_{0}^T \pa{\int_{\R^d}\abs{u^\ast(x,t)}^p\dd\rho^\ast_t(x)}\dd t\leq  \int_{\Gamma_p}c_p\pa{\gamma(0),\gamma(T)}\dd\eta^\ast(\gamma)\\
	&=\int_{\R^d\times\R^d}c_p\pa{x,y}\dd\EE_{0,T}(\eta^\ast)(x,y)=\int_{\R^d\times\R^d}c_p(x,y)\dd\pi^\ast(x,y) = \CC_p\pa{\mu,\nu},
\end{align*}
	from which the result follows.
\end{proof}

The last ingredient we need to show the Benamou--Brenier type formula is the \textit{superposition principle} (\cite[Theorem 8.2.1]{AGS08}) which we state here for the sake of completeness:
\begin{theorem}\label{thm:superposition}
	Let $\mu:[0,T]\to \PP\pa{\R^d}$ be a narrowly continuous family of Borel probability measures solving the continuity equation
	\begin{equation}\nonumber
	\partial_{t} \mu_t(x) + \nabla\cdot\pa{v(t,x)\mu_t(x)}=0,\qquad x\in \R^d,\; t\in (0,T)	
	\end{equation}
	in the sense of distributions for a suitable Borel vector field $v$ satisfying 
	$$\int_{0}^T \pa{\int_{\R^d}\abs{v(t,x)}^pd\mu_t(x)} \dd t <\infty$$ 
	for some $p>1$. Then there exists a probability measure $\bm{\tilde{\eta}} \in\mathcal{P}(\R^d\times \X)$ such that
	\begin{enumerate}[(i)]
		\item $\bm{\tilde{\eta}}$ is concentrated on the set of points $(x,\gamma)$ such that $\gamma\in AC^p\pa{0,T;\R^d}$ is a solution to the ODE $\gamma'(t)=v(t,\gamma(t))$ for a.e. $t\in (0,T)$ with respect to the Lebesgue measure on $(0,T)$ with $\gamma(0)=x$. 
		\item For any $\phi\in C_b\pa{\R^d}$ and $t\in[0,T]$
		\begin{equation}\nonumber
		\int_{\R^d}\phi(x)\dd\mu_t(x) = \int_{\R^d\times \X}\phi\pa{\gamma(t)}\dd\bm{\tilde{\eta}}(x,\gamma).	
		\end{equation}
	\end{enumerate}
\end{theorem}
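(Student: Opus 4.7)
The plan is to approximate the Borel vector field $v$ by a family of smooth vector fields, use classical Cauchy--Lipschitz theory to construct measures on $\R^d \times \X$ concentrated on the associated classical flows, and then pass to the limit using tightness and lower semicontinuity arguments.

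First I would regularize by spatial convolution with a standard mollifier $\rho^\epsilon$: set $\mu^\epsilon_t := \mu_t \ast \rho^\epsilon$ and define $v^\epsilon_t$ via the identity
$$v^\epsilon_t \mu^\epsilon_t := (v_t \mu_t) \ast \rho^\epsilon.$$
A direct computation shows that $(\mu^\epsilon, v^\epsilon)$ still satisfies the continuity equation in the sense of distributions, and that $v^\epsilon$ is smooth in space with controlled growth. Hence the ODE $\gamma'(t) = v^\epsilon(t, \gamma(t))$ with $\gamma(0) = x$ admits a unique global flow $X^\epsilon_\cdot(x) \in C^1([0,T]; \R^d)$, with $\mu^\epsilon_t = (X^\epsilon_t)_\sharp \mu^\epsilon_0$. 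Using this flow I define
$$\bm{\tilde{\eta}}^\epsilon := \int_{\R^d} \bigl( \delta_x \otimes \delta_{X^\epsilon_\cdot(x)} \bigr) \, \dd \mu^\epsilon_0(x) \in \PP(\R^d \times \X),$$
for which property (ii) holds by construction with respect to $(\mu^\epsilon_t)_{t \in [0,T]}$.

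Next I would establish tightness of $\{\bm{\tilde{\eta}}^\epsilon\}$. Jensen's inequality applied to the convolution yields
$$\int_{\R^d} |v^\epsilon_t|^p \, \dd \mu^\epsilon_t \le \int_{\R^d} |v_t|^p \, \dd \mu_t,$$
so the quantity $\int_{\R^d \times \X} \int_0^T |\gamma'(t)|^p \dd t \, \dd \bm{\tilde{\eta}}^\epsilon$ is uniformly bounded. Combined with tightness of $\{\mu^\epsilon_0\}$ in $\PP(\R^d)$ (which follows from narrow convergence to $\mu_0$), the compact embedding of bounded subsets of $W^{1,p}(0,T;\R^d)$ into $C([0,T];\R^d)$ gives tightness of $\{\bm{\tilde{\eta}}^\epsilon\}$ via the same type of Arzel\`a--Ascoli argument used in the proof of Theorem \ref{thm:relaxed controllability}. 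Extracting a narrowly convergent subsequence $\bm{\tilde{\eta}}^{\epsilon_k} \rightharpoonup \bm{\tilde{\eta}}$, property (ii) passes to the limit by continuity of the evaluation map $e_t$ and by the narrow convergence $\mu^\epsilon_t \rightharpoonup \mu_t$.

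The hard part will be verifying property (i), namely that $\bm{\tilde{\eta}}$ is concentrated on pairs $(x,\gamma)$ with $\gamma(0) = x$ and $\gamma'(t) = v(t, \gamma(t))$ a.e. The initial condition is immediate from narrow convergence. For the ODE, the strategy is to show that
$$F(\bm{\tilde{\eta}}) := \int_{\R^d \times \X} \sup_{t \in [0,T]} \Bigl| \gamma(t) - x - \int_0^t v(s, \gamma(s)) \dd s \Bigr| \, \dd \bm{\tilde{\eta}}(x, \gamma) = 0.$$
For fixed continuous, bounded $w$ the functional $F_w$ obtained by replacing $v$ with $w$ is continuous and bounded on tight subsets of $\R^d \times \X$, so $F_w(\bm{\tilde{\eta}}) = \lim_k F_w(\bm{\tilde{\eta}}^{\epsilon_k})$. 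The delicate step is then the swap from $w$ back to $v$: using that $(e_s)_\sharp \bm{\tilde{\eta}}^\epsilon = \mu^\epsilon_s$ one estimates $|F_w(\bm{\tilde{\eta}}^\epsilon) - F_{v^\epsilon}(\bm{\tilde{\eta}}^\epsilon)|$ by $\|w - v^\epsilon\|_{L^1(\mu^\epsilon_s \dd s)}$, and $F_{v^\epsilon}(\bm{\tilde{\eta}}^\epsilon) = 0$ by the classical ODE. Choosing $w$ arbitrarily close to $v$ in $L^1(\mu_t \dd t)$ (by density of continuous functions for the Borel vector field $v$) and exploiting strong convergence $v^\epsilon \to v$ in $L^1(\mu_t \dd t)$ (a standard consequence of Lebesgue differentiation for the regularization) closes the argument.
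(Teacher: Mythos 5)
This theorem is not proved in the paper at all: it is imported verbatim as \cite[Theorem 8.2.1]{AGS08} (the superposition principle of Ambrosio--Gigli--Savar\'e) and used as a black box, so the only meaningful comparison is with the proof in that reference. Your sketch is essentially that proof: mollify $\mu_t$ and $v_t\mu_t$, define $v^\epsilon_t$ as the quotient, push $\mu^\epsilon_0$ forward through the classical flow to get $\bm{\tilde\eta}^\epsilon$, obtain tightness from the Jensen bound $\int|v^\epsilon_t|^p\,\dd\mu^\epsilon_t\le\int|v_t|^p\,\dd\mu_t$ together with compactness of bounded sets of $W^{1,p}(0,T;\R^d)$ with controlled initial point, and verify the concentration property in the limit through the functional $F$ with continuous approximants $w$ of $v$. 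Two places where your outline is looser than the actual argument, though both are repaired exactly as in \cite{AGS08}: first, smoothness of $v^\epsilon$ in $x$ does not by itself give a globally defined flow; one takes a Gaussian mollifier so that $\mu^\epsilon_t>0$ everywhere and $v^\epsilon_t$ is locally Lipschitz, and global existence of the characteristics is only guaranteed for $\mu^\epsilon_0$-a.e.\ starting point, precisely as a consequence of the $p$-energy bound you invoke. Second, the phrase ``strong convergence $v^\epsilon\to v$ in $L^1(\mu_t\,\dd t)$'' is not literally meaningful, since $v^\epsilon_t$ is only defined $\mu^\epsilon_t$-a.e.\ and lives on a different measure; the correct substitute is the Jensen-type comparison $\int|v^\epsilon_t-w^\epsilon_t|\,\dd\mu^\epsilon_t\le\int|v_t-w_t|\,\dd\mu_t$, where $w^\epsilon_t$ is the analogous mollification of a bounded continuous approximant $w$ of $v$ in $L^1(\mu_t\,\dd t)$, combined with the local uniform convergence $w^\epsilon\to w$. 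With these two points made precise, your proposal coincides with the proof of the cited theorem, and nothing in it conflicts with the way the present paper uses the statement.
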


\begin{remark}\label{rem:superposition_without_x}
	Given $\bm{\tilde{\eta}}\in \PP\pa{\R^d\times \X}$ as in Theorem \ref{thm:superposition} we can define the push-forward measure $\eta\in\PP\pa{\X}$ by $\eta = \pa{\pi_2}_{\sharp}{\bm{\tilde{\eta}}}$, where $\pi_2$ is the projection on the second component of $\R^d\times \X$. $\eta$ is a probability measure that is concentrated on some Borel set $$\Gamma_v \subseteq \br{\gamma\in\X\;|\;\gamma'(t)=v(t,\gamma(t))}.$$
	
	While this is well known for experts,
	we provide a full proof of this statement in Appendix \ref{app:additional} for the sake of completeness (see Lemma \ref{applem:pushforward_concentration}).
	 
	Note that in the above settings we find that for any $\phi\in C_b\pa{\R^d}$
	$$\int_{\R^d}\phi(x)\dd{e_t}_{\sharp}\eta(x)=\int_{\X}\phi\pa{\gamma(t)}\dd\eta\pa{\gamma} = \int_{\X}\phi\pa{\gamma(t)}\dd{\pi_2}_{\sharp}\bm{\tilde{\eta}}\pa{\gamma} =\int_{\R^d\times \X} \phi\pa{\gamma(t)}\dd\bm{\tilde{\eta}}\pa{x,\gamma} = 	\int_{\R^d}\phi(x)\dd\mu_t(x),$$
	which implies that ${e_t}_{\sharp}\eta = \mu_t$.
\end{remark}

With Theorem \ref{thm:superposition} at hand we conclude this section with the proof of the Benamou--Brenier formula:
\begin{lemma}\label{lem:half_of_BB_hard_side}
	Let $\mu,\nu\in\PP_p\pa{\R^d}$ and let $\D_{p}(\mu,\nu)$ and $\CC_{p}(\mu,\nu)$ be given in \eqref{controlled p action eq} and \eqref{intro:static}, respectively. Then $\D_p\pa{\mu,\nu} \geq \CC_p\pa{\mu,\nu}.$
\end{lemma}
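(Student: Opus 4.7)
The plan is to take any $(\rho,u)\in\cadm(\mu,\nu)$ and use the superposition principle (Theorem \ref{thm:superposition} and Remark \ref{rem:superposition_without_x}) to lift the generalised continuity equation to a measure on the path space $\X$ that is concentrated on characteristic curves of the vector field $v(t,x):=M(t)x+N(t)u(t,x)$. Pushing this measure forward via $\e_{0,T}$ will produce a transport plan in $\Pi(\mu,\nu)$, and the pointwise inequality $c_p(\gamma(0),\gamma(T))\le\int_0^T|u(t,\gamma(t))|^p\,\dd t$ coming straight from the definition of $c_p$ will, after Fubini, yield the desired bound.

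Concretely, I would first fix $(\rho,u)\in\cadm(\mu,\nu)$, observe that
$$\int_0^T\!\int_{\R^d}|v(t,x)|^p\,\dd\rho_t(x)\,\dd t<\infty$$
thanks to the finite $p$-moment of $\rho_t$ (uniform in $t$, as in \eqref{eq:p_moment_of_rho_t_v1}) and the $L^p_tL^p_{\rho_t}$ integrability of $u$, so Theorem \ref{thm:superposition} applies. Combining it with Remark \ref{rem:superposition_without_x}, I obtain $\eta\in\PP(\X)$ concentrated on curves $\gamma\in AC^p(0,T;\R^d)$ satisfying $\gamma'(t)=M(t)\gamma(t)+N(t)u(t,\gamma(t))$ a.e., and such that $(e_t)_\sharp\eta=\rho_t$ for every $t\in[0,T]$; in particular $(e_0)_\sharp\eta=\mu$ and $(e_T)_\sharp\eta=\nu$. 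The measure $\pi:=(\e_{0,T})_\sharp\eta$ then belongs to $\Pi(\mu,\nu)$ by \eqref{eq:first_margina_of_E_0T}--\eqref{eq:second_margina_of_E_0T}.

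Next, by Fubini,
$$\int_{\X}\int_0^T|u(t,\gamma(t))|^p\,\dd t\,\dd\eta(\gamma)=\int_0^T\int_{\R^d}|u(t,x)|^p\,\dd\rho_t(x)\,\dd t<\infty,$$
so for $\eta$-a.e.\ $\gamma$ the function $\alpha_\gamma(\cdot):=u(\cdot,\gamma(\cdot))$ belongs to $L^p(0,T;\R^n)$, which together with $\gamma'=M\gamma+N\alpha_\gamma$ gives $\gamma\in W^{1,p}(0,T;\R^d)$ and hence $(\gamma,\alpha_\gamma)\in\pcadm(\gamma(0),\gamma(T))$. The definition of $c_p$ then yields
$$c_p(\gamma(0),\gamma(T))\le\int_0^T|u(t,\gamma(t))|^p\,\dd t\quad\text{for $\eta$-a.e.\ }\gamma.$$
Integrating against $\eta$ and applying Fubini once more,
$$\CC_p(\mu,\nu)\le\int_{\R^d\times\R^d}c_p(x,y)\,\dd\pi(x,y)=\int_{\X}c_p(\gamma(0),\gamma(T))\,\dd\eta(\gamma)\le\int_0^T\int_{\R^d}|u(t,x)|^p\,\dd\rho_t(x)\,\dd t,$$
and taking the infimum over $(\rho,u)\in\cadm(\mu,\nu)$ gives $\CC_p(\mu,\nu)\le\D_p(\mu,\nu)$.

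The only real obstacle is ensuring the superposition measure yields admissible pairs in $\pcadm$ for $\eta$-a.e.\ curve; the Fubini identity above is precisely what guarantees the required $L^p$-integrability of $\alpha_\gamma$. Measurability of $(t,\gamma)\mapsto|u(t,\gamma(t))|^p$ with respect to $\eta\otimes\dd t$ is handled by the Borel measurability of $u$ and of the evaluation map $(t,\gamma)\mapsto\gamma(t)$ on $[0,T]\times\X$, in the same way as in the proof of Theorem \ref{thm:not_empty}.
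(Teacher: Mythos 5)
Your proposal is correct and follows essentially the same route as the paper's proof: superposition (Theorem \ref{thm:superposition} plus Remark \ref{rem:superposition_without_x}), Fubini to get $u(\cdot,\gamma(\cdot))\in L^p(0,T;\R^n)$ for $\eta$-a.e.\ $\gamma$, the pointwise bound $c_p(\gamma(0),\gamma(T))\le\int_0^T|u(t,\gamma(t))|^p\,\dd t$ from the definition of $c_p$, and the pushforward $(\e_{0,T})_\sharp\eta\in\Pi(\mu,\nu)$. The only nitpick is that the finiteness of $\int_0^T\int_{\R^d}|v|^p\,\dd\rho_t\,\dd t$ should be justified by the time-integrated moment condition built into the definition of $\cadm(\mu,\nu)$ rather than by \eqref{eq:p_moment_of_rho_t_v1}, which was established only for the particular pair constructed in Theorem \ref{thm:not_empty}.
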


\begin{proof}
	Let $\mu,\nu\in\PP_p\pa{\R^d}$ be given and consider $\pa{\rho,u}\in \cadm\pa{\mu,\nu}$. By the definition of $\cadm\pa{\mu,\nu}$ we have that $v:[0,T]\times\R^{d}\to\R^{d}$ defined as
	$$v(t,x):=M(t)x+N(t)u(t,x)$$
	is a Borel field that satisfies
	\begin{align*}
		\int_{0}^T \pa{\int_{\R^d}\abs{v(t,x)}^p\dd\rho_t}\dd t & \leq 2^{p-1}\norm{M}_{L^\infty\pa{[0,T]}}\int_{0}^T\pa{\int_{\R^d}\abs{x}^p\dd\rho_t(x)}\dd t\\
		&+ 2^{p-1}\norm{N}_{L^\infty\pa{[0,T]}}\int_{0}^T\pa{\int_{\R^d}\abs{u(t,x)}^p\dd\rho_t(x)}\dd t <\infty.	
	\end{align*}
	Consequently, Theorem \ref{thm:superposition} and Remark \ref{rem:superposition_without_x} guarantee that we can find a probability measure $\eta\in \PP\pa{\X}$ that is concentrated on a Borel set $\Gamma_u$ such that 
	$$\Gamma_u  \subseteq \br{\gamma\in\X\;|\;\gamma'(t)=M(t)\gamma(t)+N(t)u(t,\gamma(t))},$$
	and ${e_t}_{\sharp}\eta = \rho_t$.
	
	Considering the product Borel measure $\dd t \otimes \eta$ on $[0,T]\times \X$ and using Fubini's theorem we find that 
	\begin{equation}\label{eq:change_of_integration_for_second_half_of_bb}
		\begin{split}
			\int_{\X}\pa{\int_{0}^T \abs{\amit{u(t,\gamma(t))}}^p\dd t}\dd\eta(\gamma)
            &=\int_{[0,T]\times\X} \abs{u\pa{t,\gamma(t)}}^p\dd t\dd\eta(\gamma) = \int_{0}^T\pa{\int_{\X}\abs{\amit{u(t,e_t(\gamma))}}^p \dd\eta(\gamma)}\dd t\\
			&=\int_{0}^T\pa{\int_{\X}\abs{\amit{u(t,x)}}^p \dd{e_t}_{\sharp}\eta(x)}\dd t=\int_{0}^T\pa{\int_{\X}\abs{\amit{u(t,x)}}^p \dd\rho_t(x)}\dd t<\infty.
		\end{split}
	\end{equation}
	We conclude that there exists a $\eta-$null set, $\mathcal{N}$, such that for every $\gamma\not\in \mathcal{N}$, $u(\cdot,\gamma(\cdot))\in L^p\pa{[0,T];\R^n}$. As such, any $\gamma\in \Gamma_u\setminus \mathcal{N}$ satisfies
	$$\gamma^\prime(t) = M(t)\gamma(t)+N(t)u_\gamma(t)$$
	with $u_\gamma(t):=u(t,\gamma(t))$. By the definition of the cost function $c_p(x,y)$, this implies that
	$$\int_{0}^T \abs{\amit{u(t,\gamma(t))}}^p\dd t \geq c_p\pa{\gamma(0),\gamma(T)},$$
	for any $\gamma\in\Gamma_u\setminus \mathcal{N}$. 
	
	Using the above with \eqref{eq:change_of_integration_for_second_half_of_bb} we find that 
\begin{align}
	\nonumber\int_{0}^T \pa{\int_{\R^d}\abs{u(t,x)}^p\dd\rho_t(x)}\dd t 
    &= \int_{\X}\pa{\int_{0}^T \abs{\amit{u(t,\gamma(t))}}^p\dd t}\dd\eta(\gamma)=\int_{\Gamma_u\setminus \mathcal{N}}\pa{\int_{0}^T \abs{\amit{u(t,\gamma(t))}}^p\dd t}\dd\eta(\gamma)\\
	\label{eq:for_equivalence_later}&\geq \int_{\Gamma_u\setminus \mathcal{N} }c_p\pa{\gamma(0),\gamma(T)}\dd\eta(\gamma)=\int_{\X }c_p\pa{\gamma(0),\gamma(T)}\dd\eta(\gamma)\\
	\nonumber&=\int_{\R^d\times \R^d}c_p(x,y)\dd\EE_{0,T}(\eta)(x,y),
\end{align}
	where $\EE_{0,T}$ was defined in Lemma \ref{lem:connection_between_optimisation_sets}. Moreover, from \eqref{eq:first_margina_of_E_0T} and \eqref{eq:second_margina_of_E_0T} in the proof of the same lemma we know that for any Borel set $A\subset \R^n$
	$$\EE_{0,T}(\eta)\pa{A\times\R^d} ={e_0}_{\sharp}\eta(A) =\rho_0(A)=\mu(A),$$
	$$\EE_{0,T}(\eta)\pa{\R^d\times A} ={e_T}_{\sharp}\eta(A) =\rho_T(A)=\nu(A),$$	
	i.e. $\EE_{0,T}(\eta)\in \Pi\pa{\mu,\nu}$. We conclude that for any $\pa{\rho,u}\in \cadm\pa{\mu,\nu}$
	\begin{equation}\nonumber
		\int_{0}^T \pa{\int_{\R^d}\abs{u(t,x)}^p\dd\rho_t(x)}\dd t \geq  \inf_{\pi\in\Pi\pa{\mu,\nu}}\int_{\R^d\times\R^d}c_p(x,y)\dd\pi(x,y) = \CC_p\pa{\mu,\nu}.
	\end{equation} 
	Taking an infimum over $\cadm\pa{\mu,\nu}$ yield the desired result and completes the proof. 
\end{proof}

\begin{proof}[Proof of Theorem \ref{thm:general_bb}]
	Consider $\pi^\ast\in \Pi\pa{\mu,\nu}$ and $\pa{\rho^\ast,u^\ast}\in \cadm\pa{\mu,\nu}$ as prescribed by Lemma \ref{lem:half_of_BB_easy_side}. Then, with the assistance of Lemma \ref{lem:half_of_BB_hard_side} we find that
	\begin{equation}\nonumber
		\CC_p\pa{\mu,\nu}\leq \D_p\pa{\mu,\nu} \leq \int_{0}^T \pa{\int_{\R^d}\abs{u^\ast(x,t)}^p\dd\rho^{\ast}_t(x)}\dd t \leq \int_{\R^d\times\R^d}c_p(x,y)\dd\pi^\ast(x,y) =  \CC_p\pa{\mu,\nu},
	\end{equation}
	from which the result follows. By the previous chain of inequalities, we see in particular that $\pi^{*}$ is a minimiser for $\CC_p\pa{\mu,\nu}$ while $(\rho^{*},u^{*})$ is a minimiser for $\D_p(\mu,\nu)$. The result follows.
\end{proof}

\begin{remark}\label{rem:recipe}
	We would like to note that not only did we manage to show that the dynamic and static problems are minimised and equivalent -- we have actually managed to find a ``recipe'' that takes a minimiser of one problem to another. Indeed, as was seen in the proof of Lemma \ref{lem:half_of_BB_easy_side}, given a minimiser $\pi^\ast\in \Pi\pa{\mu,\nu}$ for $\CC\pa{\mu,\nu}$, the process described in Remark \ref{rem:constructed_solution_for_generalised_continuity} for the measure  $\eta^\ast = \EE_{0,T}^{-1}\pa{\pi^\ast}$, where we have used the bijectivity of $\EE_{0,T}$, gives us a minimiser $\pa{\rho^\ast,u^\ast}\in\cadm\pa{\mu,\nu}$ for $\D_{p}\pa{\mu,\nu}$. \\
	On the other hand, given a minimiser $\pa{\rho^\ast,u^\ast}\in\cadm\pa{\mu,\nu}$ for $\D_{p}\pa{\mu,\nu}$, the process described in the proof of Lemma \ref{lem:half_of_BB_hard_side} finds $\eta^\ast\in \PP\pa{\X}$ such that $\pi^\ast = \EE_{0,T}\pa{\eta^\ast}\in \Pi\pa{\mu,\nu}$ and
\begin{align*}	
	\CC_{p}\pa{\mu,\nu} & =\D_{p}\pa{\mu,\nu} =\int_{0}^T \pa{\int_{\R^d}\abs{u^\ast(t,x)}^p\dd\rho^\ast_t(x)}\dd t\\
	& \geq \int_{\R^d\times \R^d}c_p(x,y)\dd\EE_{0,T}(\eta)(x,y) = \int_{\R^d\times\R^d}c_p(x,y)\dd\pi^\ast(x,y) \geq \CC_{p}\pa{\mu,\nu},
\end{align*}	
	where we have used \eqref{eq:for_equivalence_later}, giving us a minimiser for $\CC_{p}\pa{\mu,\nu}$.
\end{remark}

\appendix

\section{Additional proofs}\label{app:additional}

In this appendix we provide additional proofs which were omitted from the main body of the work.

\begin{proof}[Proof of Lemma \ref{lem:2_parameter_semigroup}]
	\leavevmode
	\begin{enumerate}[(i)]
		\item Due to the uniqueness of solutions to \eqref{homogeneous particle system} we know that for any $x\in\R^d$
		$$\Phi\pa{s,t}x = \gamma_H^{s,x}(t) = \gamma_{H}^{\tau, \gamma_H^{s,x}(\tau)}(t)=\Phi\pa{\tau,t}\gamma_H^{s,x}(\tau)=\Phi(\tau,t)\circ \Phi\pa{s,\tau}x, $$
		which shows the desired result. 
		\item This is immediate from the definition of $\Phi(s,t)$.
		\item Since for any $x,y\in\R^d$ 
		$$\frac{\dd}{\dd t}\pa{\gamma_H^{s,x}(t)+\gamma_H^{s,y}(t)}=M(t)\pa{\gamma_H^{s,x}(t)+\gamma_H^{s,y}(t)}$$
		and since $\gamma_H^{s,x}(s)+\gamma_H^{s,y}(s)= x+y$, the uniqueness of solution to \eqref{homogeneous particle system} implies that 
		$$\Phi\pa{s,t}\pa{ x+y} = \gamma_H^{s, x+y}(t) = \gamma_H^{s,x}(t)+\gamma_H^{s,y}(t) = \Phi\pa{s,t}x+\Phi\pa{s,t}y.$$
		Similarly, for any $x\in\R^d$ and any $\alpha\in \R$
		$$\frac{\dd}{\dd t}\pa{\alpha \gamma_H^{s, x}(t)}=M(t)\pa{\alpha \gamma_H^{s,x}(t)}$$
		and since $\alpha\gamma_H^{s,x}(s)= \alpha x$ we find that
				$$\Phi\pa{s,t}\pa{\alpha x} = \gamma_H^{s, \alpha x}(t) = \alpha \gamma_H^{s,x}(t) = \alpha\Phi\pa{s,t}x.$$
		\item Any $C^1$ solution $\gamma$ to \eqref{homogeneous particle system} satisfies
		$$\abs{\gamma_H(t)} \leq \abs{\gamma_H(s)} + \int_{\min\br{s,t}}^{\max\br{s,t}} \norm{M(\tau)}\abs{\gamma_H(\tau)}\dd\tau \leq \abs{\gamma_H(s)} + M_1\int_{\min\br{s,t}}^{\max\br{s,t}} \abs{\gamma_H(\tau)}\dd\tau $$
		and consequently 
		$$\abs{\gamma_H^{s,x}(t)} \leq \abs{x} + M_1\int_{\min\br{s,t}}^{\max\br{s,t}} \abs{\gamma_H^{s,x}(\tau)}\dd\tau. $$
		Using Gr\"onwall's inequality we conclude that 
		$$\abs{\Phi(s,t)x} = \abs{\gamma_H^{s,x}(t)} \leq \abs{x}e^{M_1\abs{t-s}}.$$
		As $x\in \R^d$ was arbitrary we achieve the desired result.
		\item The continuity of $\Phi$ follows by its definition and the properties of the homogenous system \eqref{homogeneous particle system}. By definition, we have that for any $x\in\R^d$
		$$\frac{\dd}{\dd t}\Phi(s,t)x = M(t)\Phi(s,t)x.$$
		Identifying $\Phi(s,t)$ with its matrix representation we see that for any $s\in (0,T)$ the matrix $\Phi(s,\cdot)$ is differentiable and satisfies 
		$$\frac{\dd}{\dd t}\Phi(s,t) = M(t)\Phi(s,t).$$
		As inverses of differentiable matrices are differentiable, and as $\Phi(s,t) \Phi(t,s)=\mathbf{I}_{d\times d}$, we conclude that for any $s\in (0,T)$ the matrix $\Phi(t,s)$ is differentiable and
		$$\frac{\dd}{\dd t}\Phi(s,t) \; \Phi(t,s) + \Phi(s,t)\;\frac{\dd}{\dd t}\Phi(t,s)=0,$$
		which implies that for a fixed $t\in (0,T)$ 
		$$\frac{\dd}{\dd s}\Phi(s,t) = - \Phi(s,t)\pa{M(s)\Phi(t,s) }\Phi(s,t) =-\Phi(s,t)M(s).$$
		As $M\in C\pa{\rpa{0,T}}$ we find that $\partial_s\Phi(s,t),\partial_t\Phi(s,t)\in C\pa{[0,T]\times [0,T]}$. Using the fact that $M\in C^{\beta-1}\pa{\pa{t',T}} $ we can continue and differentiate $\beta-1$ times and find that all associated partial derivatives are continuous. The proof is thus complete.
	\end{enumerate}
\end{proof}

We now turn our attention to the proof of Lemma \ref{frechet differentiability of J lemma}. This will require two technical results: the first provides a useful estimate for the difference between $\abs{x}^p$ and $\abs{y}^p$ while the second is a generalisation of the standard dominated convergence theorem. 
\begin{lemma}
	\label{technical estimates on |x|^p-|y|^p - lemma AGS}
	 Let $p>1$. Then, there exist constants $0<c_{p,n},C_{p,n} <\infty$ that only depend on $p$ and $n$ such that for any $x,y\in \R^n$ we have that 
	 \begin{equation}\label{eq:technical_p_leg_2}
	 	0\leq \frac{p-1}{2}\abs{x-y}^2 \min(\abs{x},\abs{y})^{p-2}
	 	\leq \frac{1}{p}\abs{y}^p - \frac{1}{p}\abs{x}^p - \jj_p(x)^\top \pa{y-x}
	 	\leq C_{p,n}\abs{x-y}^p,
	 \end{equation}
	 when $1<p\leq 2$ and 
	 	 \begin{equation}\label{eq:technical_p_geg_2}
	 	0\leq c_{p,n}\abs{x-y}^p
	 	\leq \frac{1}{p}\abs{y}^p - \frac{1}{p}\abs{x}^p - \jj_p(x)^\top \pa{y-x}
	 	\leq \frac{p-1}{2}\abs{x-y}^2 \max(\abs{x},\abs{y})^{p-2},
	 \end{equation}
	 when $p>2$, where we define $\jj_{p}:\R^{d}\to\R^{d}$ as
	 $$\jj_p(x) := \begin{cases}
	 	\abs{x}^{p-2}x,& x\neq 0,\\
	 0,& x=0. 
	 \end{cases}$$
\end{lemma}

\begin{proof}
	A more general version of this lemma and its proof can be found in \cite[Lemma 10.2.1]{AGS08}. 
\end{proof}

\begin{lemma}
	\label{generalised dct lemma}
	Let $(E,\mu)$ be a measure space where $\mu$ is a Borel measure. Let $(f_n)_{n\in\N}$ and $(g_n)_{n\in\N}$, $f_{n},g_{n}: E\to\R$ be two sequences of measurable functions that converge pointwise $\mu-$a.e. to $f:E\to\R$ and $g:E\to\R$, respectively. Assume in addition that 
	$$\abs{f_n} \leq g_n,\qquad \int_{E}g_n\dd\mu \underset{n\to\infty}{\longrightarrow}\int_E g \dd\mu <\infty.$$
	Then 
	$$\int_{E}f_n\dd\mu \underset{n\to\infty}{\longrightarrow}\int_E f \dd\mu.$$
\end{lemma}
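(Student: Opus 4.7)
The plan is to adapt the classical proof of the dominated convergence theorem, replacing the fixed integrable dominator with the variable sequence $(g_n)_{n\in\N}$ via two applications of Fatou's lemma.

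First I would record the pointwise consequences of the hypotheses. Since $g_n\geq |f_n|\geq 0$ $\mu$-a.e. and both sequences converge pointwise $\mu$-a.e., one obtains $g\geq |f|\geq 0$ $\mu$-a.e. by passing to the limit. Together with $\int_E g\dd\mu<\infty$ this ensures that $f$, $g$, and each $f_n$, $g_n$ are integrable, so that all sums and differences of integrals below are well-defined and finite.

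Next I would consider the two auxiliary sequences
\begin{equation*}
h_n^+ := g_n + f_n \geq 0, \qquad h_n^- := g_n - f_n \geq 0,
\end{equation*}
which are non-negative by the domination $|f_n|\leq g_n$ and which converge pointwise $\mu$-a.e. to $g+f$ and $g-f$ respectively. Applying Fatou's lemma to $(h_n^+)_{n\in\N}$ yields
\begin{equation*}
\int_E (g+f)\dd\mu \leq \liminf_{n\to\infty}\int_E (g_n + f_n)\dd\mu = \int_E g\dd\mu + \liminf_{n\to\infty}\int_E f_n\dd\mu,
\end{equation*}
where the last equality uses the assumed convergence $\int_E g_n\dd\mu\to\int_E g\dd\mu$. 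Since $\int_E g\dd\mu$ is finite, it can be subtracted from both sides, giving $\int_E f\dd\mu \leq \liminf_{n\to\infty}\int_E f_n\dd\mu$. The symmetric application of Fatou's lemma to $(h_n^-)_{n\in\N}$ produces $\int_E (g-f)\dd\mu \leq \int_E g\dd\mu - \limsup_{n\to\infty}\int_E f_n\dd\mu$, and after cancellation $\limsup_{n\to\infty}\int_E f_n\dd\mu \leq \int_E f\dd\mu$. Chaining these two inequalities gives the desired limit $\int_E f_n\dd\mu\to\int_E f\dd\mu$.

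The proof is essentially routine, and I do not anticipate a genuine obstacle: the only point that requires care is making sure $\int_E g\dd\mu$ is finite before subtracting it, which is precisely guaranteed by the hypothesis. No measure-theoretic delicacy (e.g. completeness of $\mu$ or the Borel assumption) plays a substantive role; these appear only to ensure measurability of the pointwise limits, which is standard.
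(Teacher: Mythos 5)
Your argument is correct and is exactly the route the paper takes: its proof simply states that the result follows from applying Fatou's lemma to $g_n+f_n$ and $g_n-f_n$, which is what you carry out in detail. Nothing further is needed.
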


\begin{proof}
	The proof is a straightforward application of Fatou's lemma for $g_n-f_n$ and $g_n+f_n$.
\end{proof}

\begin{proof}[Proof of Lemma \ref{frechet differentiability of J lemma}]
	As a first step, we will show the Fr\'echet differentiability of $J$. For any $\alpha,u\in L^p\pa{0,T;\R^n}$ we have that  
\begin{align*}
	\Bigg{|}J\pa{\alpha+u} - J(\alpha) &- p \int_0^T \mathfrak{j}_p(\alpha(t))^\top u(t)\dd t\Bigg{|} \leq \int_{0}^T\abs{\abs{\alpha(t)+u(t)}^p - \abs{\alpha(t)}^p -p \mathfrak{j}_p(\alpha(t))\cdot u(t) }\dd t\\
	&\leq \begin{cases}
		pC_{p,n}\int_{0}^T \abs{u(t)}^p \dd t, & 1<p\leq 2,\\
		\frac{p(p-1)}{2}\int_{0}^T \abs{u(t)}^2\max\br{\abs{\alpha(t)},\abs{\alpha(t)+u(t)}}^{p-2}\dd t,& p>2.
	\end{cases}\\
	&\leq \begin{cases}
		pC_{p,n}\norm{u}^p_{L^p\pa{0,T;\R^n}}, & 1<p\leq 2,\\
		\frac{p(p-1)}{2}\;2^{p-2}\pa{\norm{u}^p_{L^p\pa{0,T;\R^n}}+\int_{0}^T \abs{u(t)}^2\abs{\alpha(t)}^{p-2}},& p>2,
	\end{cases}.
\end{align*}
	where we have used Lemma \ref{technical estimates on |x|^p-|y|^p - lemma AGS}. We conclude that 
	\begin{align*}
			&\displaystyle\frac{\abs{J\pa{\alpha+u} - J(\alpha) -\displaystyle p \int_0^T \mathfrak{j}_p(\alpha(t))^T u(t)\dd t}}{\norm{u}_{L^p\pa{0,T;\R^n}}}\\
			&\leq \begin{cases}
				pC_{p,n}\norm{u}^{p-1}_{L^p\pa{0,T;\R^n}}, & 1<p\leq 2,\\[5pt]
				2^{p-3}p(p-1)\pa{\norm{u}^{p-1}_{L^p\pa{0,T;\R^n}}+\norm{u}_{L^p\pa{0,T;\R^n}}\norm{\alpha}^{p-2}_{L^p\pa{0,T;\R^n}}},& p>2.
			\end{cases}
	\end{align*}
	where we have used the fact that the H\"older conjugate of $\frac{p}{2}$ is $\frac{p}{p-2}$ when $p>2$. Consequently 
	 $$\frac{\abs{J\pa{\alpha+u} - J(\alpha) - D_\alpha J(u)}}{\norm{u}_{L^p\pa{0,T;\R^n}}}\underset{u\to 0}{\longrightarrow}0,$$
	 which shows the differentiability of $J$ at any $\alpha\in L^p\pa{0,T;\R^n}$.
	 
	To show that the Fr\'echet derivative of $J$ is continuous, it is enough to show that the map $\jj_p$ is continuous. Indeed, since
	$$\abs{D_{\alpha}J\pa{u}-D_{\beta}J(u)}\leq p\norm{\jj_p(\alpha)-\jj_p(\beta)}_{L^q\pa{0,T;\R^n}}\norm{u}_{L^p\pa{0,T;\R^n}}$$
	we have that 
	$$\norm{D_{\alpha}J-D_{\beta}J}\leq p\norm{\jj_p(\alpha)-\jj_p(\beta)}_{L^q\pa{0,T;\R^n}}$$
	which shows that the modulus of continuity of the Fr\'echet derivative is controlled by that of $\jj_p$. We thus focus on showing the continuity of $\jj_p$.
	
	Since $L^q\pa{0,T;\R^n}$ is a normed space, and as such a metric space, to show that $\jj_p$ is continuous it is enough to show that if $(\alpha_m)_{m\in\N}$ converges to $\alpha$ in $L^p\pa{0,T;\R^n}$, then for any subsequence of $\br{\alpha_m}_{m\in\N}$, $(\alpha_{m_k})_{k\in\N}$ there exists a subsequence, $(\alpha_{m_{k_j}})_{j\in\N}$, on which 
	$$\norm{\jj_p\pa{\alpha_{m_{k_j}}}-\jj_p\pa{\alpha}}_{L^q\pa{0,T;\R^n}}\underset{j\to\infty}{\longrightarrow}0.$$ 
	
	Given a subsequence of $(\alpha_m)_{m\in\N}$, $(\alpha_{m_k})_{k\in\N}$, we can extract a subsequence of it, $\left(\alpha_{m_{k_j}}\right)_{j\in\N}$, that converges pointwise a.e. to $\alpha$. Denoting by 
	$$\sign(x):=\begin{cases}
		\frac{x}{\abs{x}}, & x\neq 0,\\
		0, & x=0,
	\end{cases}$$
	we find that $\jj_p\pa{\alpha} = \abs{\alpha}^{p-1}\sign(\alpha)$ and
	$$\abs{\jj_p\pa{\alpha_{m_{k_j}}(t)}-\jj_p\pa{\alpha(t)}} \leq \abs{\abs{\alpha_{m_{k_j}}(t)}^{p-1}-\abs{\alpha(t)}^{p-1}} +\abs{\alpha(t)}^{p-1}\abs{\sign\pa{\alpha_{m_{k_j}}(t)}-\sign\pa{\alpha(t)}}. $$
	Denoting by $f_j(t)=\abs{\jj_p\pa{\alpha_{m_{k_j}}(t)}-\jj_p\pa{\alpha(t)}}^q $ we get that 
	$$f_j(t)\leq 2^q\pa{\abs{\abs{\alpha_{m_{k_j}}(t)}^{p-1}-\abs{\alpha(t)}^{p-1}}^q +\abs{\alpha(t)}^{p}\abs{\sign\pa{\alpha_{m_{k_j}}(t)}-\sign\pa{\alpha(t)}}^q}. $$
	Furthermore, if we denote by $\mathcal{Z}:=\br{t\in [0,T]\;|\;\alpha(t)=0}$, we see that 
	$$f_j(t)=f_j(t)\chi_{\mathcal{Z}}(t)+f_j(t)\chi_{\mathcal{Z}^c}(t) \underset{j\to\infty}{\longrightarrow}0$$
	pointwise a.e., where we have used the fact that 
	$$ 0\leq f_j(t)\chi_{\mathcal{Z}}(t) \leq 2^q\pa{\abs{\abs{\alpha_{m_{k_j}}(t)}^{p-1}-\abs{\alpha(t)}^{p-1}}^q}\underset{j\to\infty}{\longrightarrow}0$$
	pointwise a.e., and the fact that on $\mathcal{Z}^c$ 
	$$\sign\pa{\alpha_{m_{k_j}}(t)}\underset{j\to\infty}{\longrightarrow}\sign\pa{\alpha(t)},$$
	pointwise a.e.
	
	Moreover, denoting by
	$$g_j(t) = 4^q\pa{\abs{\alpha_{m_{k_j}}(t)}^{p}+2\abs{\alpha(t)}^{p}}, \quad g(t)=3\cdot 4^q\abs{\alpha(t)}^{p},$$
	  we find that
	  $$\abs{f_j(t)}=f_j(t) \leq g_j(t),\quad g_j(t)\underset{j\to\infty}{\longrightarrow} g(t)\text{ pointwise a.e. and}\quad \int_{0}^T g_j(t)\dd t\underset{j\to\infty}{\longrightarrow}\int_0^T g(t)\dd t, $$
	  where we have used the fact that $\left(\alpha_{m_{k_j}}\right)_{j\in\N}$ converges to $\alpha$ in $L^p\pa{0,T;\R^n}$. Using the generalised dominated convergence theorem from Lemma \ref{generalised dct lemma} we find that 
	  $$\norm{\jj_p\pa{\alpha_{m_{k_j}}}-\jj_p\pa{\alpha}}_{L^q\pa{0,T;\R^n}}^q \leq \int_{0}^T f_j(t)\dd t\underset{j\to\infty}{\longrightarrow} \int_{0}^T 0 \dd t=0,$$
	  which concludes the proof.  
\end{proof}

\begin{lemma}\label{applem:missing_measurability}
	Under the same notations as in the proof of Theorem \ref{thm:not_empty} we have that for any non-negative Borel function $\Psi : [0,T]\times \R^d\to [0,+\infty)$, the function $[0,T]\ni s\mapsto \int_{\R^d}\Psi(s,x)\dd{e_s}_{\sharp}\eta(x)$ is Borel measurable and  
	\begin{equation}\label{eqapp:integration_of_e_s_vs_e_sharp_eta}
		\frac{1}{T}\int_{0}^T \pa{\int_{\R^d}\Psi(s,x)\dd{e_s}_{\sharp}\eta(x)}\dd s =\int_{[0,T]\times \R^d}\Psi(s,x) \dd\pa{\bm{e}_{\sharp}\bm{\eta}}(s,x).
	\end{equation} 
\end{lemma}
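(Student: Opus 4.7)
The plan is to proceed by a monotone class argument, bootstrapping from the subclass $C_b\pa{[0,T]\times \R^d}$ for which identity \eqref{eqapp:integration_of_e_s_vs_e_sharp_eta} and the Borel measurability of $s\mapsto \int_{\R^d}\Psi(s,x)\dd{e_s}_\sharp\eta(x)$ have already been established in the proof of Theorem \ref{thm:not_empty}. The guiding idea is that both sides of \eqref{eqapp:integration_of_e_s_vs_e_sharp_eta} are well-behaved under bounded monotone pointwise limits, so the initial seed class expands to all bounded Borel functions, and then to non-negative Borel functions by truncation.

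More concretely, first I would introduce the collection $\mathcal{H}$ of all bounded Borel functions $\Psi:[0,T]\times \R^d\to\R$ such that (a) the map $s\mapsto \int_{\R^d}\Psi(s,x)\dd{e_s}_\sharp\eta(x)$ is Borel measurable on $[0,T]$, and (b) identity \eqref{eqapp:integration_of_e_s_vs_e_sharp_eta} holds (interpreted as an equality of real numbers, without any sign assumption). The computations in \eqref{eq:L_p_for_u_v1} show that $\mathcal{H}$ contains $C_b\pa{[0,T]\times \R^d}$, and linearity of integrals implies that $\mathcal{H}$ is a vector space that contains the constant function $1$.

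Next, I would verify that $\mathcal{H}$ is closed under bounded monotone pointwise limits. Indeed, if $\pa{\Psi_n}_{n\in\N}\subset \mathcal{H}$ is an increasing sequence of non-negative functions converging pointwise to a bounded Borel function $\Psi$, then applying the classical monotone convergence theorem for each fixed $s\in[0,T]$ against the Borel measure ${e_s}_\sharp\eta$ gives
\begin{equation*}
	\int_{\R^d}\Psi(s,x)\dd{e_s}_\sharp\eta(x) = \lim_{n\to\infty}\int_{\R^d}\Psi_n(s,x)\dd{e_s}_\sharp\eta(x),
\end{equation*}
which is a pointwise limit of Borel measurable functions of $s$ and hence itself Borel measurable, thus establishing (a). Integrating over $[0,T]$, applying monotone convergence again on the left, together with monotone convergence on the right against $\bm{e}_\sharp\bm{\eta}$ in \eqref{eqapp:integration_of_e_s_vs_e_sharp_eta}, gives (b) for the limit $\Psi$. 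Since $C_b\pa{[0,T]\times \R^d}$ is a multiplicatively closed family containing the constants and generating the Borel $\sigma$-algebra of $[0,T]\times \R^d$, the functional monotone class theorem (see e.g. \cite[Theorem I.21]{DellacherieMeyer} or any standard reference on the $\pi$-$\lambda$ theorem) yields $\mathcal{H}\supset$ all bounded Borel functions on $[0,T]\times \R^d$.

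Finally, for an arbitrary non-negative Borel function $\Psi:[0,T]\times \R^d\to[0,+\infty)$, I would apply the previous step to the truncations $\Psi_n:=\min\br{\Psi,n}$, which are bounded Borel and therefore in $\mathcal{H}$. Passing to the limit $n\to\infty$ via monotone convergence on both sides of \eqref{eqapp:integration_of_e_s_vs_e_sharp_eta} (including allowing the value $+\infty$) concludes the proof. The main technical point is the closure of $\mathcal{H}$ under bounded monotone limits; the measurability of the limiting partial integral is the only non-trivial verification, and it follows from the fact that pointwise limits of Borel measurable scalar functions on $[0,T]$ remain Borel measurable, applied fibrewise in $s$.
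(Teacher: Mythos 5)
Your argument is correct, and it reaches the statement by a genuinely different route than the paper. The paper proceeds constructively: starting from the already established case of bounded continuous $\Psi$, it treats $\chi_K$ for compact $K$ via decreasing Urysohn-type approximations, then bounded and unbounded open sets by exhaustion, then general Borel sets using inner/outer regularity of $\bm{e}_{\sharp}\bm{\eta}$ (which forces an almost-everywhere identification of $s\mapsto \int_{\R^d}\chi_B(s,x)\dd{e_s}_{\sharp}\eta(x)$ with a Borel function, up to a null set), and finally simple functions and increasing limits. You instead take the class $\mathcal{H}$ of bounded Borel functions satisfying (a) and (b), note that it is a vector space containing the constants and $C_b\pa{[0,T]\times\R^d}$ (by \eqref{eq:L_p_for_u_v1}), verify stability under bounded increasing limits fibrewise in $s$ by monotone convergence, and invoke the functional monotone class theorem with the multiplicative generating class $C_b\pa{[0,T]\times\R^d}$, finishing with truncations $\min\br{\Psi,n}$. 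Your checks of the hypotheses are the right ones (linearity is unproblematic since ${e_s}_{\sharp}\eta$ and $\bm{e}_{\sharp}\bm{\eta}$ are probability measures, so all integrals of bounded functions are finite, and $\sigma\pa{C_b}$ is the Borel $\sigma$-algebra of the metric space $[0,T]\times\R^d$). What the two approaches buy: the paper's proof is longer but entirely self-contained, using only measure regularity and monotone convergence; yours is shorter, delegates the set-theoretic bookkeeping to a standard theorem, and in fact sidesteps the slightly delicate null-set step in the paper's treatment of general Borel sets, yielding Borel measurability of the partial integral for every bounded Borel $\Psi$ directly. Either is acceptable; if you write yours up, state precisely the version of the monotone class theorem you use (vector space, constants, multiplicative seed class, closure under bounded increasing limits of non-negative members) rather than citing it loosely.
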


\begin{proof}
	We have seen in the proof of Theorem \ref{thm:not_empty} in the main text that the statement of this lemma holds when $\Psi$ is a bounded and continuous function,  so it remains to consider the general case, when $\Psi$ is non-negative and measurable. Let $K\subset [0,T]\times \R^d$ be compact. Defining
	$$U_n:=\br{\pa{s,x}\in [0,T]\times \R^d\;|\; {\rm{dist}}\pa{\pa{s,x},K} <\frac{1}{n}}$$
	we find that $\pa{U_n}_{n\in\N}$ is a sequence of nested (i.e. $U_{n+1}\subseteq U_{n},\ \forall n\in\N$) open sets such that 
	$$K=\cap_{n\in\N}U_n.$$
	We recall that in any metric space the function 
	$$f_{A,B}(x) := \frac{{\rm{dist}}(x,B^c)}{{\rm{dist}}(x,A)+{\rm{dist}}(x,B^c)}$$
	is continuous whenever $A$ is a closed set, $B$ is an open set, and $A \subset B$. Moreover,
	$$0\leq f_{A,B}\leq 1,\qquad f_{A,B}\vert_{A}=1,\qquad f_{A,B}\vert_{B^c}=0.$$
	For the metric space $[0,T]\times\R^{d}$, whose elements we denote as $(s,x)$, defining $f_n:[0,T]\times\R^{d}\to[0,1]$ as
	$$f_n\pa{s,x} := f_{K,U_n}\pa{s,x}$$
	gives us a sequence of bounded continuous functions that satisfy $f_n\vert_{K}=1$ and $f_n\vert_{U_n^c}=0$. In addition, as $\pa{U_n}_{n\in\N}$ is decreasing, so is $\pa{f_n}_{n\in\N}$ (i.e. $f_{n+1}(s,x)\le f_{n}(s,x)$ for all $n\in\N$ and for all $(s,x)\in[0,T]\times\R^{d}$). Since 
	$$\lim_{n\to\infty}f_n\pa{s,x} = \chi_{K}\pa{s,x}$$
	we find, using the monotone convergence theorem for finite measures, that 
	$$\int_{\R^d}\chi_K(s,x)\dd{e_s}_{\sharp}\eta(x) = \lim_{n\to\infty}\int_{\R^d}f_n(s,x)\dd{e_s}_{\sharp}\eta(x),$$
	which implies that $[0,T]\ni s\mapsto \int_{\R^d}\chi_K(s,x)\dd{e_s}_{\sharp}\eta(x)$ is Borel measurable as a limit of such functions. Moreover, using the monotone convergence theorem again we see that 
\begin{align*}	
	\frac{1}{T}\int_{0}^T \pa{\int_{\R^d}\chi_K(s,x)\dd{e_s}_{\sharp}\eta(x)}\dd s & =\frac{1}{T}\int_{0}^T\pa{\lim_{n\to\infty}\int_{\R^d}f_n(s,x)\dd{e_s}_{\sharp}\eta(x)}\dd s\\
	&=\lim_{n\to\infty}\frac{1}{T}\int_{0}^T\pa{\int_{\R^d}f_n(s,x)\dd{e_s}_{\sharp}\eta(x)}\dd s\\
	&=\lim_{n\to\infty}\int_{[0,T]\times \R^d}f_n(s,x) \dd\bm{e}_{\sharp}\bm{\eta}(s,x)=\int_{[0,T]\times \R^d}\chi_K(s,t) \dd\bm{e}_{\sharp}\bm{\eta}(s,x).
\end{align*}
	Similarly, for any bounded open set $U\subset [0,T]\times \R^d$ we can find a sequence of increasing compact sets $\pa{K_n}_{n\in\N}$ such that $\cup_{n\in\N}K_n=U$\footnote{Indeed, the sets $K_n = \br{\pa{s,x}\in U\;|\; {\rm{dist}}\pa{\pa{s,x},\partial U} \geq \frac{1}{n}}$ are closed and bounded sets that satisfy 
	$$K_{n}\subseteq K_{n+1},\qquad \text{and}\qquad \cup_{n\in N}K_n=U.$$}. Defining $f_n(s,x)=\chi_{K_n}(s,x)$ gives us an increasing sequence of bounded functions that converges pointwise to $\chi_U(s,x)$.  Using the monotone convergence theorem we conclude that 
	$$[0,T]\ni s\mapsto\int_{\R^d}\chi_U(s,x)\dd{e_s}_{\sharp}\eta(x) = \lim_{n\to\infty}\int_{\R^d}f_n(s,x)\dd{e_s}_{\sharp}\eta(x),$$
	is Borel measurable with respect to $s$ and 
\begin{align*}	
	\frac{1}{T}\int_{0}^T \pa{\int_{\R^d}\chi_U(s,x)\dd{e_s}_{\sharp}\eta(x)}\dd s &=\frac{1}{T}\int_{0}^T\pa{\lim_{n\to\infty}\int_{\R^d}f_n(s,x)\dd{e_s}_{\sharp}\eta(x)}\dd s\\
	&=\lim_{n\to\infty}\frac{1}{T}\int_{0}^T\pa{\int_{\R^d}f_n(s,x)\dd{e_s}_{\sharp}\eta(x)}\dd s\\
	&=\lim_{n\to\infty}\int_{[0,T]\times \R^d}f_n(s,x) \dd\bm{e}_{\sharp}\bm{\eta}(s,x)=\int_{[0,T]\times \R^d}\chi_U(s,t) \dd\bm{e}_{\sharp}\bm{\eta}(s,x).
\end{align*}
	If $U$ is an unbounded open set then we can build on the above by considering the increasing sequence of open sets $U_n=U\cap B_n(0)$ and repeating this process. 
	
	Next we consider a Borel set $B\subset [0,T]\times \R^d$. Due to the regularity of $\bm{e}_{\sharp}\bm{\eta}$ we can find a sequence of compact sets $\pa{K_n}_{n\in\N}\subset [0,T]\times \R^d$ and open sets $\pa{U_n}_{n\in\N}$ in $[0,T]\times \R^d$ such that 
	$$K_n\subset B \subset U_n,\;\; \forall n\in\N,\qquad \bm{e}_{\sharp}\bm{\eta}\pa{U_n\setminus K_n}\underset{n\to\infty}{\longrightarrow}0.$$
	Moreover, we can assume without loss of generality that the sequence $\pa{K_n}_{n\in\N}$ is increasing while the sequence $\pa{U_n}_{n\in\N}$ is decreasing. We conclude that the functions $g_{1},g_{2}:[0,T]\to [0,\infty)$ defined as
	$$g_1(s) := \lim_{n\to\infty}\int_{\R^d}\chi_{K_n}(s,x)\dd{e_s}_{\sharp}\eta(x) = \int_{\R^d}\chi_{\cup_{n\in\N}K_n}(s,x)\dd{e_s}_{\sharp}\eta(x)$$
	and
	$$g_2(s) :=\lim_{n\to\infty}\int_{\R^d}\chi_{U_n}(s,x)\dd{e_s}_{\sharp}\eta(x)=\int_{\R^d}\chi_{\cap_{n\in\N}U_n}(s,x)\dd{e_s}_{\sharp}\eta(x)$$
	are well defined and Borel measurable. Moreover, $g_1\leq g_2$ and using the monotone convergence theorem and what we have shown so far we see that 
\begin{align*}	
	0&\leq \frac{1}{T}\int_{0}^T \pa{g_2(s)-g_1(s)}\dd s = \lim_{n\to\infty}\frac{1}{T}\int_{0}^T\pa{\int_{\R^d}\pa{\chi_{U_n}(s,x)-\chi_{K_n}(s,x)}\dd{e_s}_{\sharp}{\eta}(x)} \dd s\\
	&=\lim_{n\to\infty}\int_{[0,T]\times \R^d}\pa{\chi_{U_n}(s,x)-\chi_{K_n}(s,x)}\dd\pa{\bm{e}_{\sharp}\bm{\eta}}(s,x) = \lim_{n\to\infty}(\bm{e}_{\sharp}\bm{\eta})\pa{U_n\setminus K_n}=0.
\end{align*}
	Consequently there exists a Borel null set $N$ such that $g_1(s)=g_2(s)=:g(s)$ for all $s\not\in N$.
	
	Since for any $n\in\N$\footnote{note that for a fixed $s$ the $s-$section of $B$ is Borel measurable so $\int_{\R^d}\chi_{B}(s,x)\dd{e_s}_{\sharp}\eta(x)$ is well defined.}
	$$ \int_{\R^d}\chi_{K_n}(s,x)\dd{e_s}_{\sharp}\eta(x)\leq \int_{\R^d}\chi_{B}(s,x)\dd{e_s}_{\sharp}\eta(x)\leq \int_{\R^d}\chi_{U_n}(s,x)\dd{e_s}_{\sharp}\eta(x)$$
	we conclude that for $s\not\in N$
	$$g(s)=\int_{\R^d}\chi_{B}(s,x)\dd{e_s}_{\sharp}\eta(x)$$
	which shows that (since $N$ is a null Borel set) $[0,T]\ni s\mapsto\int_{\R^d}\chi_{B}(s,x)\dd{e_s}_{\sharp}\eta(x)$ is Borel measurable. Moreover, using the monotone convergence theorem again
\begin{align*}
	\frac{1}{T}\int_{0}^T\pa{\int_{\R^d}\chi_{B}(s,x)\dd{e_s}_{\sharp}\eta(x)}\dd s &= \frac{1}{T}\int_0^T\pa{\lim_{n\to\infty}\int_{\R^d}\chi_{K_n}(s,x)\dd{e_s}_{\sharp}\eta(x)}\dd s\\
	&=\lim_{n\to\infty}\frac{1}{T}\int_{0}^T\pa{\int_{\R^d}\chi_{K_n}(s,x)\dd{e_s}_{\sharp}\eta(x)}\dd s\\
	&=\lim_{n\to\infty}\int_{[0,T]\times \R^d}\chi_{K_n}(s,x)\dd\bm{e}_{\sharp}\bm{\eta} = \int_{[0,T]\times \R^d}\chi_{B}(s,x)\dd\bm{e}_{\sharp}\bm{\eta},
\end{align*}
	since $\pa{\chi_{K_n}}_{n\in\N}$ converges to $\chi_B$ in $L^1\pa{\dd\bm{e}_{\sharp}\bm{\eta}}$.
	
	We have shown at this point, that for any Borel set $B\subset [0,T]\times \R^d$ the function $[0,T]\ni s\mapsto\int_{\R^d}\chi_{B}(s,x)\dd{e_s}_{\sharp}\eta(x)$ is Borel measurable and 
	$$\frac{1}{T}\int_0^T \pa{\int_{\R^d}\chi_{B}(s,x)\dd{e_s}_{\sharp}\eta(x)}\dd s = \int_{[0,T]\times \R^d}\chi_{B}(s,x)\dd\bm{e}_{\sharp}\bm{\eta}.$$
	Consequently the above holds for any simple functions and since for any non-negative Borel function $\Psi$ we can find an increasing sequence of non-negative simple functions $\pa{h_n}_{n\in\N}$ that converges to $\Psi$ pointwise, using the monotone convergence theorem yet again we conclude that 
	$$[0,T]\ni s \mapsto\int_{\R^d}\Psi(s,x)\dd{e_s}_{\sharp}\eta(x) = \lim_{n\to\infty}\int_{\R^d}h_n(s,x)\dd{e_s}_{\sharp}\eta(x)$$
	is Borel measurable with respect and 
\begin{align*}	
	\frac{1}{T}\int_{0}^T \pa{\int_{\R^d}\Psi(s,x)\dd{e_s}_{\sharp}\eta(x) }\dd s & = \lim_{n\to\infty}\frac{1}{T}\int_0^T \pa{\int_{\R^d}h_n(s,x)\dd{e_s}_{\sharp}\eta(x) }\dd s\\
	&=\lim_{n\to\infty}\int_{[0,T]\times \R^d}h_n(s,x)\dd\bm{e}_{\sharp}\bm{\eta}(s,x)=\int_{[0,T]\times \R^d}\Psi(s,x)\dd\pa{\bm{e}_{\sharp}\bm{\eta}}(s,x).
\end{align*}
	The proof is thus complete.
\end{proof}

\begin{lemma}\label{applem:pushforward_concentration}
	Let $X$ and $Y$ be Polish spaces such that $X=\cup_{n\in\N}K_n$ with $\pa{K_n}_{n\in\N}$ being an increasing sequence of compact sets. Let $\eta\in \PP\pa{X\times Y}$ be given and define $\mu\in\PP\pa{Y}$ by 
	$$\mu=\pa{\pi_2}_{\sharp}\eta,$$
	where $\pi_2:X\times Y\to Y$ is given by $\pi_2\pa{x,y}=y$. Then there exists a Borel set $S$ on which $\mu$ is concentrated such that 
	$$S\subseteq \pi_2\pa{\mathrm{spt}\pa{\eta}}.$$
\end{lemma}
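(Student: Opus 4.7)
The main subtlety here is that $\pi_2(\mathrm{spt}(\eta))$ is only an analytic set in general: it is the continuous image of the closed (hence Polish) set $\mathrm{spt}(\eta)$ in the separable metric space $X\times Y$, but continuous images of Borel sets need not be Borel. So we cannot simply take $S=\pi_2(\mathrm{spt}(\eta))$. The plan is to build an $F_\sigma$ (hence Borel) subset of $\pi_2(\mathrm{spt}(\eta))$ by carving the support into compact pieces and pushing those forward under the (restricted) continuous map $\pi_2$, which preserves compactness.

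First, I would recall that $\eta(\mathrm{spt}(\eta))=1$ (a standard fact for Borel probability measures on separable metric spaces) and that $\eta$ is tight, since $X\times Y$ is Polish. Using tightness, for each $n\in\N$ choose a compact set $L_n\subseteq Y$ with $\eta(X\times L_n^{c})<\tfrac{1}{n}$; by replacing $L_n$ with $\cup_{k\leq n}L_k$ we may assume $(L_n)_{n\in\N}$ is increasing. Combining with the hypothesis $X=\cup_n K_n$ (so that $\eta(K_n^{c}\times Y)\to 0$), the set $C_n:=\mathrm{spt}(\eta)\cap(K_n\times L_n)$ satisfies
\begin{equation*}
\eta(C_n)\;\geq\; 1-\eta(K_n^{c}\times Y)-\eta(X\times L_n^{c})\;\underset{n\to\infty}{\longrightarrow}\;1.
\end{equation*}

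Next I would define $S_n:=\pi_2(C_n)$. Since $K_n\times L_n$ is compact and $\mathrm{spt}(\eta)$ is closed, $C_n$ is compact; by continuity of $\pi_2$, $S_n$ is a compact (hence closed, hence Borel) subset of $Y$, and by construction $S_n\subseteq\pi_2(\mathrm{spt}(\eta))$. Setting
\begin{equation*}
S\;:=\;\bigcup_{n\in\N} S_n,
\end{equation*}
we obtain an $F_\sigma$-set that is therefore Borel and still satisfies $S\subseteq \pi_2(\mathrm{spt}(\eta))$.

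To conclude, it remains to verify that $\mu$ is concentrated on $S$. This is immediate from
\begin{equation*}
\mu(S)\;\geq\;\mu(S_n)\;=\;\eta\bigl(\pi_2^{-1}(S_n)\bigr)\;\geq\;\eta(C_n)\;\underset{n\to\infty}{\longrightarrow}\; 1,
\end{equation*}
so $\mu(S)=1$. The only genuine obstacle is the non-Borelness of $\pi_2(\mathrm{spt}(\eta))$; once compactness is injected via tightness and the $\sigma$-compact exhaustion $(K_n)$, the continuous image argument furnishes a Borel inner approximation automatically.
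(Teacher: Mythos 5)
Your proof is correct, but it takes a genuinely different route from the paper. The paper truncates the measure, setting $\eta_n(\cdot)=\eta\pa{(K_n\times Y)\cap\cdot}$ and $\mu_n=(\pi_2)_\sharp\eta_n$, proves the inclusion $\mathrm{spt}(\mu_n)\subseteq\pi_2\pa{\mathrm{spt}(\eta_n)}$ by a tube-type finite-subcover argument over the compact slice $K_n\times\{y\}$ (any $y\in\mathrm{spt}(\mu_n)$ outside $\pi_2(\mathrm{spt}(\eta_n))$ would yield a product neighbourhood of $K_n\times\{y\}$ of $\eta_n$-measure zero, contradicting $y\in\mathrm{spt}(\mu_n)$), and then takes $S=\cup_n\mathrm{spt}(\mu_n)$. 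You instead work directly with pieces of the support: using $\eta(\mathrm{spt}(\eta))=1$, Ulam tightness in the $Y$-direction and the exhaustion $X=\cup_nK_n$, you carve out compact sets $C_n=\mathrm{spt}(\eta)\cap(K_n\times L_n)$ with $\eta(C_n)\to1$, and let $S$ be the $F_\sigma$ union of the compact images $\pi_2(C_n)$. Both arguments use compactness to sidestep the fact that $\pi_2(\mathrm{spt}(\eta))$ is in general only analytic, but yours avoids the support-inclusion lemma and the covering argument entirely, replacing them with tightness; as a by-product your argument shows the $\sigma$-compactness hypothesis on $X$ is not really needed (tightness of $\eta$ on the Polish product already supplies the compact exhaustion of full measure), whereas the paper's covering step leans on the compactness of each $K_n$. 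The estimates you rely on ($\eta(C_n)=\eta(K_n\times L_n)\geq 1-\eta(K_n^c\times Y)-\eta(X\times L_n^c)$ and $\mu(S_n)=\eta(\pi_2^{-1}(S_n))\geq\eta(C_n)$) are all valid, so the proof stands as written.
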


\begin{proof}
	We start by mentioning that if $Z$ is a Polish space and $\mu_1,\mu_2\in \PP\pa{Z}$ satisfy $\mu_1\leq \mu_2$  then $${\rm{spt}}\pa{\mu_1}\subseteq {\rm{spt}}\pa{\mu_2}\footnote{Indeed, for any $x\in {\rm{spt}}\pa{\mu_1}$ and any open set containing $x$, $U$, we have that $\mu_2\pa{U} \geq \mu_1\pa{U}>0$, which shows that $x\in {\rm{spt}}\pa{\mu_2}$.}.$$
	
	For any $n\in\N$ we define the Borel measure $\eta_n$ by 
	$$\eta_n(\A) = \eta\pa{\pa{K_n\times Y}\cap \A}$$
	for any Borel set $\A$ in $X\times Y$ and let $\mu_n:=\pa{\pi_2}_{\sharp}\eta_n$. We note the following:
	\begin{equation}\label{eqapp:monotonicity_of_eta_n}
		\eta_{n}\leq \eta_{n+1},\qquad \eta_n \leq \eta,\quad \forall n\in\N,	
	\end{equation}
	which implies that 
	\begin{equation}\label{eqapp:monotonicity_of_mu_n}
		\mu_n\leq \mu_{n+1},\qquad \mu_n\leq \mu,\quad \forall n\in\N.
	\end{equation}
	As a first step we claim that ${\rm{spt}}\pa{\mu_n}\subseteq \pi_2\pa{{\rm{spt}}\pa{\eta_n}}$. Indeed, assume that there exists $y\in {\rm{spt}}\pa{\mu_n}$ such that $y\not\in \pi_2\pa{{\rm{spt}}\pa{\eta_n}}$. Then, for any $x\in X$ we have that there exists an open set $U_{x,y}$ in $X\times Y$ with 
	$$\eta_n\pa{U_{x,y}}=0.$$
	This implies that we can find $r(x),\rho(x)>0$ such that $B_{r(x)}(x)\times B_{\rho(x)}(y)\subseteq U_{x,y}$ and consequently. 
	$$\eta_n\pa{B_{r(x)}(x)\times B_{\rho(x)}(y)}=0.$$
	The collection $\pa{B_{r(x)}(x)\times B_{\rho(x)}(y)}_{x\in K_n}$ is an open cover for $K_n\times \br{y}$ which is compact in $X\times Y$. Consequently, we can find $\br{x_j}_{j=1,\dots,m}\subset K_n$ and $\br{r_i,\rho_i}_{i=1,\dots,m}\in (0,\infty)$ such that 
	$$K_n\times \br{y}\subset \cup_{i=1}^m B_{r_i}(x_i)\times B_{\rho_i}(y).$$
	Denoting by $\rho=\min_{i=1,\dots,m}\rho_i>0$ we see that 
	$$K_n\times \br{y}\subset  \pa{\cup_{i=1}^m B_{r_i}(x_i)}\times B_{\rho}(y)\subset \cup_{i=1}^m B_{r_i}(x_i)\times B_{\rho_i}(y)$$
	and conclude that 
\begin{align*}	
	\mu_n\pa{B_{\rho}(y)} &= \eta_n\pa{\pi_2^{-1}\pa{B_{\rho}(y)}}=\eta_n\pa{X\times B_{\rho}(y)}\\
		&=\eta\pa{\pa{K_n\times Y}\cap \pa{X\times B_{\rho}(y)}}=\eta\pa{K_n\times B_{\rho}(y)}=\eta_n\pa{K_n\times B_{\rho}(y)}\\
		&\leq \eta_n\pa{\cup_{i=1}^m B_{r_i}(x_i)\times B_{\rho_i}(y)} \leq \sum_{i=1}^m \eta_n\pa{B_{r_i}(x_i)\times B_{\rho_i}(y)}=0,
\end{align*}
	which contradicts the fact that $y\in {\rm{spt}}\pa{\mu_n}$. 
	
	From the above discussion and \eqref{eqapp:monotonicity_of_eta_n} we find that
	$${\rm{spt}}\pa{\mu_n}\subseteq \pi_2\pa{{\rm{spt}}\pa{\eta_n}} \subseteq \pi_2\pa{{\rm{spt}}\pa{\eta}}.$$
	From \eqref{eqapp:monotonicity_of_mu_n} we see that $\pa{{\rm{spt}}\pa{\mu_n}}_{n\in\N}$ is an non-decreasing sequence of closed sets and as such $S=\cup_{n\in\N}{\rm{spt}}\pa{\mu_n}$ is a Borel set which satisfies
	$$S\subseteq \pi_2\pa{{\rm{spt}}\pa{\eta}}.$$
	Lastly, we notice that
	$$\mu(S) \geq \mu_n(S)\geq \mu_n({\rm{spt}}\pa{\mu_n})=\mu_n\pa{Y} = \eta_n\pa{X\times Y}=\eta\pa{K_n\times Y}.$$
	Since $\lim_{n\to\infty}\eta\pa{K_n\times Y} = \eta\pa{X\times Y}=1$ we find that $\mu(S)=1$ which concludes the proof. 
\end{proof}

\bibliographystyle{alpha}
\bibliography{reference}

\end{document}